\theoremstyle{definition}
\newtheorem{thm}{Theorem}
\newtheorem{de}{Definition}
\newtheorem{prop}{Proposition}
\newtheorem{lem}{Lemma}
\newtheorem{cor}{Corollary}
\newtheorem{rem}{Remark}
\newtheorem{ex}{Example}
\begin{document}

\title{On the orthogonality of Atkin-like polynomials and orthogonal polynomial expansion of generalized Faber polynomials}

\author{Tomoaki Nakaya}

\address{Faculty of Mathematics, Kyushu University, 744, Motooka, Nishi-ku, Fukuoka, 819-0395, Japan}
\email{t-nakaya@math.kyushu-u.ac.jp}
\keywords{Extremal quasimodular forms; Atkin polynomials; generalized Faber polynomials; orthogonal polynomials; hypergeometric series}
\subjclass[2010]{11F11, 11F25, 11F30, 33C05, 33C45}

\begin{abstract}
In this paper, we consider the Atkin-like polynomials that appeared in the study of normalized extremal quasimodular forms of depth 1 on $SL_{2}(\mathbb{Z})$ by Kaneko and Koike as orthogonal polynomials and clarify their properties. By considering Atkin-like polynomials in terms of orthogonal polynomials, we prove an unexpected connection between generalized Faber polynomials, which are closely related to certain bases of the vector space of weakly holomorphic modular forms, and normalized extremal quasimodular forms. In particular, we show that the orthogonal polynomial expansion coefficients of the generalized Faber polynomials by the Atkin-like polynomials appear in the Fourier coefficients of normalized extremal quasimodular forms multiplied by certain (weakly) holomorphic modular forms.
\end{abstract}

\maketitle


\section{Introduction and Preliminaries} \label{sec:intro}

Let $\mathfrak{H}$ be the complex upper half plane and 
$\tau \in \mathfrak{H},\, q=e^{2\pi i \tau}$ and $\Gamma=SL_{2}(\mathbb{Z})$. 
We denote by $\mathcal{M}$ the $\mathbb{C}$-vector space of the  holomorphic $\Gamma$-invariant function in $\mathfrak{H}$ which is meromorphic at $i \infty$. Then the space $\mathcal{M}$ coincides with the polynomial ring $\mathbb{C}[j]$, where $j=j(\tau)=q^{-1} + 744 +196884 q + \cdots$ is the elliptic modular invariant. 
In the 1980s, inspired by Rankin's work on the zeros of the Eisenstein series \cite{rankin1968zerosEisenstein}, Atkin defined the following inner product $(\;,\;)$ on $\mathcal{M}$:
 \begin{align}
(f,g) = \text{constant term of $fgE_{2}$ as a Laurent series in $q$}, \label{def:Atkininnerprod}
\end{align}
where $E_{2}$ is the normalized quasimodular Eisenstein series of weight 2 on $\Gamma$. We call this inner product the Atkin inner product. 
This inner product has the following remarkable properties. 
\begin{enumerate}[(i)]
\item The Atkin inner product is non-degenerate and positive definite on $\mathbb{R}[j]$, and the Hecke operators $T_{n}:\mathcal{M} \to \mathcal{M}\,(n \in \mathbb{N})$ are self-adjoint with respect to this inner product, i.e., $(f|_{0}T_{n},g)=(f,g|_{0}T_{n})$ holds for $f,g \in \mathcal{M}$. 
An inner product on $\mathcal{M}$ with these properties is unique up to  a scalar multiple.
\item Let $\{A_{n}(j)\}_{n\ge0}$ be an orthogonal polynomial system with respect to the Atkin inner product. We assume that $A_{n}(j)$ is a monic polynomial of degree $n$. Then we have $A_{n}(j) \in \mathbb{Q}[j]$. 
Furthermore, if $p$ is a prime and $n_{p}= \deg ss_{p}(j)$, then $A_{n_{p}}(j)$ has $p$-integral coefficients and $A_{n_{p}}(j) \equiv ss_{p}(j) \pmod{p}$ holds, where the polynomial $ss_{p}(j)$ is the supersingular polynomial of characteristic $p$. In other words, the roots of $A_{n_{p}}(j) \pmod{p}$ give the $j$-invariants of supersingular elliptic curves in characteristic $p$.
\end{enumerate}
We call this polynomial $A_{n}(X)$ the Atkin's orthogonal polynomial or Atkin polynomial for short. Atkin did not publish these results, but we can see a simplified proof of his discoveries in \cite{kaneko1998supersingular} by Kaneko and Zagier. They also give recursion and closed formula for Atkin polynomials.

Later, in \cite{kaneko2006extremal}, Kaneko and Koike defined extremal quasimodular forms of even weight and depth $\ge1$ on $\Gamma$ and proved that those of depth 1 are solutions of the differential equation (or its analogue) which appeared in \cite{kaneko1998supersingular}. In particular, Atkin polynomials $A_{n}(X)$ appear in the explicit representation of the extremal quasimodular forms of weight $12n+2$ and depth 1 on~$\Gamma$. 
They also introduce a polynomial sequence $\{A_{n,r}(X)\}_{n\ge0}$ related to the extremal quasimodular forms of weight $12n+r \; (r \in \{0,6,8\})$, which satisfies a recursion similar to the Atkin polynomials. (Throughout this paper we will refer to the polynomials $A_{n,r}(X)$ collectively as the Atkin-like polynomials.) 
However, neither closed formulas for these polynomials nor their aspect as orthogonal polynomials are not given in \cite{kaneko2006extremal}. 
The purpose of this paper is to show them and to provide a new perspective on normalized extremal quasimodular forms as an application.

The paper is organized as follows. 
In Section \ref{sec:clfAtkinpoly}, we give closed formulas for the Atkin-like polynomials $A_{n,r}(X)$ and their adjoint polynomials $B_{n,r}(X)$. 
The method of proof is the same as for the Atkin polynomial in \cite{kaneko1998supersingular}, but for the first time we give a closed formula for the polynomials $B_{n,r}(X)$, which has been ignored so far.

Section \ref{sec:orthogonalityA} clarifies the aspects of the Atkin-like polynomials $A_{n,r}(X)$ as orthogonal polynomials.
The key to the proof are various formulas for the linear functional $\mathcal{L}$ corresponding to the Atkin inner products, which follow from the results of the previous section that the polynomials $A_{n,r}(X)$ (with some factors) can be expanded in certain truncated hypergeometric series. 
In particular, the fact that the normalized extremal quasimodular forms have an expression using $\mathcal{L}$ leads to a relationship between the polynomials $A_{n,r}(X)$ and the generalized Farber polynomials, which will be pursued in the next section.

In Section \ref{sec:genFaberpoly} we introduce the generalized Faber polynomials $F_{k,n}(X)$ of weight $k$ and degree~$n$ on $\Gamma$, which are closely related to certain bases of the vector space of weakly holomorphic modular forms on $\Gamma$. Using a result of Duke and Jenkins on the generating function of the generalized Faber polynomials, we will prove that the coefficients of the orthogonal polynomial expansion of the generalized Faber polynomials using the Atkin-like polynomials $A_{n,r}(X)$ are the Fourier coefficients of the normalized extremal quasimodular forms multiplied by the appropriate powers of $E_{4}, E_{6}$, and $\Delta$.

In Section \ref{sec:Atkin-Petersson} we show that the Atkin inner product can be interpreted as the weight zero counterpart of the Petersson inner product. 
We then use the results of the previous section to give a partial answer to Kaneko's question concerning the Atkin inner products $(H_{n}(j), A_{\ell}(j))$ of the ``Poincar\'{e} series'' $H_{n}(j)$ of weight $0$ and the Atkin polynomials. 
Furthermore, inspired by Duke, Imamo\={g}lu and T\'{o}th's work on regularized Petersson inner products, we also consider generating series of $(H_{n}(j), H_{\ell}(j))$.
A slight modification of this result yields the generating series of the inner products $(F_{k,n}(j), F_{k,\ell}(j))$ of the generalized Faber polynomials, which in fact can be shown to have a simple closed form.

Finally, although of little relevance to the polynomials $A_{n,r}(X)$ that are the main subject of this paper, Section \ref{sec:HypgenFaber} discusses the hypergeometric aspects of the generalized Faber polynomials.
For the proof we will use the classical result of the relation between the inverse series of the inverse of the modular invariant $j(\tau )$ and the quotient of two independent solutions of a certain hypergeometric differential equation.

In the following, we present the symbols and statements used in this paper without proofs from Sections 1 and 2 of the paper \cite{nakaya2023determination}.
A quasimodular form of weight $w$ on $\Gamma$ is given as
\begin{align*}
\sum_{\substack{2\ell+4m+6n=w \\ \ell,m,n \geq 0}} C_{\ell,m,n} E_{2}^{\,\ell} E_{4}^{\,m} E_{6}^{\,n} \in QM_{*}(\Gamma) \coloneqq \mathbb{C}[E_{2},E_{4},E_{6}] . 
\end{align*}
Here $E_{k}=E_{k}(\tau)$ is the standard Eisenstein series on $\Gamma$ of weight $k$ defined by
\begin{align*}
E_{k}(\tau ) = 1 - \frac{2k}{B_{k}} \sum_{n=1}^{\infty} \sigma_{k-1}(n)  q^{n}, \quad \sigma_{k}(n) = \sum_{d \mid n} d^{k}, 
\end{align*}
where $B_{k}$ is $k$-th Bernoulli number, e.g., $B_{2}=\tfrac{1}{6}, B_{4}= -\tfrac{1}{30}, B_{6}= \tfrac{1}{42}$. 
We denote the vector space of modular forms and cusp forms of weight $k$ on $\Gamma$ by $M_{k}(\Gamma)$ and $S_{k}(\Gamma)$, respectively. It is well known that $E_{k} \in M_{k}(\Gamma)$ for even $k\ge4$, but $E_{2}$ is not modular and is quasimodular. 
Specifically, for $\bigl( \begin{smallmatrix}  a & b \\ c & d \end{smallmatrix} \bigr) \in \Gamma$, we have
\begin{align*}
&{} E_{k}\left( \frac{a \tau +b}{c \tau +d} \right) = (c \tau +d)^{k}E_{k}(\tau) \quad (k\ge4), \\
&{} E_{2}\left( \frac{a \tau +b}{c \tau +d} \right) = (c \tau +d)^{2}E_{2}(\tau ) + \frac{6}{\pi i} c(c \tau +d).
\end{align*}
We also define the ``discriminant'' cusp form $\Delta$ of weight $12$ on $\Gamma$ as follows.
\begin{align*}
\Delta (\tau ) = q \prod_{n=1}^{\infty} (1-q^{n})^{24} = \frac{E_{4}(\tau)^{3} - E_{6}(\tau)^{2}}{1728} \in S_{12}(\Gamma) .
\end{align*}
Then the elliptic modular invariant $j(\tau )$ is defined by $j(\tau )=E_{4}(\tau )^{3}/\Delta (\tau )$.

Any quasimodular form $f$ of weight $w$ can be uniquely written as 
\begin{align*}
f = \sum_{\ell=0}^{r} E_{2}^{\,\ell} f_{\ell}, \quad f_{\ell} \in M_{w-2\ell}(\Gamma), \quad  f_{r} \not= 0
\end{align*}
with $r \in \mathbb{Z}_{\ge0}$ and we call it depth of $f$. 
Let $QM_{w}^{(r)} = QM_{w}^{(r)}(\Gamma)$ denote the vector space of quasimodular forms of weight $w$ and depth $\le r$ on $\Gamma$. In particular, $QM_{w}^{(0)}(\Gamma)$ is equal to $M_{w}(\Gamma)$. (Hereafter, we often omit the reference to the group $\Gamma$.) 
From the fact $\dim_{\mathbb{C}} QM_{w}^{(r)} = \sum_{\ell=0}^{r} \dim_{\mathbb{C}} M_{w-2\ell}$, the generating function of the dimension of $QM_{w}^{(r)}$ is given by the following:
\begin{align*}
&\sum_{k=0}^{\infty} \dim_{\mathbb{C}} QM_{2k}^{(r)}\, T^{2k} = \frac{\sum_{\ell=0}^{r} T^{2\ell}}{(1-T^{4})(1-T^{6})} \\
&= \frac{1-T^{2(r+1)}}{(1-T^{2})(1-T^{4})(1-T^{6})} \quad (r \in \mathbb{Z}_{\ge0}, \; |T|<1 ) .
\end{align*}
See \cite{grabner2020quasimodular} for the explicit formula of $\dim_{\mathbb{C}} QM_{w}^{(r)}$.

We denote by $D$ the differential operator $D = \tfrac{1}{2 \pi i } \tfrac{d}{d \tau} = q \tfrac{d}{d q}$. It is well known that the Eisenstein series $E_{2},E_{4}$ and $E_{6}$ satisfy the following differential relation (equation\footnote{See \cite[\S 3]{cruz2018manifold} and \cite[\S 4.1]{movasati2012quasimodular} for Halphen's contribution.}) by Ramanujan \cite{ramanujan2000arithmeticalfunctions} (See also \cite[Thm.~0.21]{cooper2017ramanujan} and notes on p.~43.):
\begin{align}
D(E_{2}) = \frac{E_{2}^{2} - E_{4}}{12}, \quad D(E_{4}) = \frac{E_{2}E_{4} - E_{6}}{3}, \quad D(E_{6}) = \frac{E_{2}E_{6} - E_{4}^{2}}{2} . \label{eq:DEisen}
\end{align}
Therefore, the ring $QM_{*}(\Gamma)$ is closed under the derivation $D$.
Using the infinite product expression of $\Delta(\tau)$ or the differential relation \eqref{eq:DEisen}, we see that $D(\log \Delta) = D(\Delta)/\Delta = E_{2}$.

\begin{de}
We call a quasimodular form $f \in QM_{w}^{(r)} \backslash QM_{w}^{(r-1)}$ is extremal if it is expressed in the form $f = c\, q^{m-1}(1+O(q))$, where $m = \dim_{\mathbb{C}} QM_{w}^{(r)}$ and $c \in \mathbb{C}\backslash\{0\}$. 
If $c=1$, $f$ is said to be normalized. We denote by $G_{w}^{(r)}$ the normalized extremal quasimodular form of weight $w$ and depth $r$ on $\Gamma$ (if it exists). 
\end{de}
As discussed in the first section of \cite{nakaya2023determination}, the normalized extremal quasimodular forms $G_{w}^{(r)}$ exist uniquely for depth $r$ between 1 and 4. 
Since we are only dealing with the case of depth 1 in this paper, we will write $G_{w}^{(1)}$ simply as $G_{w}$.

Here are some examples of $G_{w}$ with small weights.
\begin{align*}
G_{2} &= E_{2} = D(\log \Delta) = 1 -24 \sum_{n=1}^{\infty} \sigma_{1}(n) q^{n}, \\
G_{6} &=  \frac{E_{2}E_{4} - E_{6}}{720} = \frac{D(E_{4})}{240} = \sum_{n=1}^{\infty} n \sigma_{3}(n) q^{n} , \\
G_{8} &= \frac{E_{4}^{2} - E_{2}E_{6}}{1008} = - \frac{D(E_{6})}{504}  = \sum_{n=1}^{\infty} n \sigma_{5}(n) q^{n} , \\ 
G_{10} &= E_{4}G_{6}^{(1)} = \frac{D(E_{8})}{480}  = \sum_{n=1}^{\infty} n \sigma_{7}(n) q^{n} ,\\
G_{12} &= \frac{E_{4}^{3} - 1008 \Delta - E_{2}E_{4}E_{6}}{332640} = \frac{1}{2 \cdot 3 \cdot 5^{2} \cdot 7} \sum_{n=2}^{\infty} ( n\sigma_{9}(n) - \tau(n) ) q^{n}  \\
&= q^{2}+56 q^{3}+1002 q^{4}+9296 q^{5}+57708 q^{6}+269040 q^{7} +O(q^{8}),  \\
G_{14} &= \frac{E_{2} (E_{4}^{3} - 720 \Delta)  - E_{4}^{2}E_{6}}{393120} = \frac{1}{2 \cdot 3 \cdot 691} \sum_{n=2}^{\infty} n(\sigma_{11}(n) - \tau(n) ) q^{n} \\
&=  q^{2}+128 q^{3}+4050 q^{4}+58880 q^{5}+525300 q^{6}+3338496 q^{7} +O(q^{8}) , 
\end{align*}
where Ramanujan's tau-function $\tau (n)$ is defined by $\Delta = \sum_{n=1}^{\infty} \tau (n) q^{n}$. 
In these examples (although non-trivial for the last two) $G_{w} \in \mathbb{Z}[\![q]\!]$ holds, but in fact there are only 22 weights $w$ with such a property. 
See \cite{nakaya2023determination} for details.

Though the symbols are slightly different, in their paper \cite{kaneko2006extremal} Kaneko and Koike expressed the normalized extremal quasimodular forms $G_{w}$ by using the monic polynomials $A_{m,r}(X)$ and $B_{m,r}(X)$ as follows:
\begin{align}
&{} G_{12m} = \frac{1}{N_{m,0}} \left( - E_{2} E_{4} E_{6} \Delta^{m-1} A_{m,0}(j) +  \Delta^{m} B_{m,0}(j)  \right), \label{eq:G12mAB} \\
&{} G_{12m+2} = \frac{1}{N_{m,2}} \left( E_{2} \Delta^{m} A_{m,2}(j) - E_{4}^{2} E_{6} \Delta^{m-1} B_{m,2}(j)  \right), \label{eq:G12m2AB}\\
&{} G_{12m+6} = \frac{1}{N_{m,6}} \left( E_{2} E_{4} \Delta^{m} A_{m,6}(j) - E_{6} \Delta^{m} B_{m,6}(j)  \right), \label{eq:G12m6AB}\\
&{} G_{12m+8} = \frac{1}{N_{m,8}} \left( - E_{2} E_{6} \Delta^{m} A_{m,8}(j) + E_{4}^{2} \Delta^{m} B_{m,8}(j)  \right) ,  \label{eq:G12m8AB}
\end{align}
where the normalizing factor $N_{m,r}$ is given below for any $m\ge0$ except for $N_{0,0}=N_{0,2}\coloneqq1$. 
\begin{align}
N_{m,0} &= 24m \binom{6m}{2m} \binom{12m}{6m} , \quad  N_{m,2} = \frac{12m+1}{12m-1} N_{m,0} , \label{eq:d1nf02} \\
N_{m,6} &= N_{m+1/2,0} = 12(2m+1) \binom{6m+3}{2m+1} \binom{12m+6}{6m+3}, \quad N_{m,8} = N_{m+1/2,2}. \label{eq:d1nf68}
\end{align}
Here, $N_{m,2}$ and $N_{m,8}$ are integers, because $\tfrac{1}{2n-1} \tbinom{2n}{n} = -\tbinom{2n}{n} +4 \tbinom{2n-2}{n-1}$ holds.
In particular, the polynomial $A_{m,2}(X)$ is nothing but the original Atkin polynomial $A_{m}(X)$ treated in \cite{kaneko1998supersingular}.

Let $p,q$ be nonnegative integers and $a_{i},b_{j} \in \mathbb{C}$ with $b_{j}\not\in \mathbb{Z}_{\le0}$. The generalized hypergeometric series ${}_{p}F_{q}$ is defined by
\begin{align*}
{}_{p}F_{q} \left( a_{1}, \dots , a_{p} ; b_{1}, \dots , b_{q} ; z \right) = \sum_{n=0}^{\infty} \frac{(a_{1})_{n} \dotsm (a_{p})_{n}}{(b_{1})_{n} \dotsm (b_{q})_{n}} \frac{z^{n}}{n!}, 
\end{align*}
where $(a)_{0}=1, (a)_{n} = a(a+1) \cdots (a+n-1)\;(n\ge1)$ denotes the Pochhammer symbol. This series is clearly invariant under the interchange of each of the parameters $a_{i}$, and the same is true for $b_{j}$. 
When $p=q+1$, the series $F={}_{p}F_{p-1}\left( a_{1}, \dots , a_{p} ; b_{1}, \dots , b_{q} ; z \right)$ satisfies the differential equation
\begin{align*}
z^{p-1} (1 - z) \frac{d^{p}F}{d z^{p}} + \sum_{n=1}^{p-1} z^{n-1}(\alpha_{n}z + \beta_{n}) \frac{d^{n}F}{d z^{n}} +\alpha_{0}\,F =0,
\end{align*}
where $\alpha_{n}$ and $\beta_{n}$ are some constants depend on the parameters $a_{i}$ and $b_{j}$. 
By using the Euler operator $\Theta = z \tfrac{d}{d z}$, the above differential equation can be rewritten as
\begin{align*}
\{ \Theta (\Theta +b_{1} -1) \dotsm (\Theta +b_{p-1} -1) - z (\Theta + a_{1}) \dotsm (\Theta + a_{p}) \} F =0. 
\end{align*}
The following transformation formula is valid for the Gaussian hypergeometric series ${}_{2}F_{1}$:
\begin{align}
{}_{2}F_{1} \left( \alpha, \beta ; \gamma ; z \right) = (1-z)^{\gamma-\alpha-\beta} {}_{2}F_{1} \left( \gamma -\alpha, \gamma -\beta ; \gamma ; z \right). \label{eq:Euler} 
\end{align}

The Eisenstein series $E_{4},E_{6}$, and $E_{2}$ have the following hypergeometric expressions: see \cite{stiller1988classical} for the first two and \cite{nakaya2023determination} for $E_{2}$.
\begin{prop}\label{prop:EisensteinHyp}
For sufficiently large $\Im (\tau)$, we have
\begin{align*}
E_{4}(\tau ) &= {}_{2}F_{1} \left( \frac{1}{12}, \frac{5}{12} ; 1 ; \frac{1728}{j(\tau)}   \right)^{4}, \\
E_{6}(\tau ) &= \left( 1 - \frac{1728}{j(\tau)} \right)^{1/2} {}_{2}F_{1} \left( \frac{1}{12}, \frac{5}{12} ; 1 ; \frac{1728}{j(\tau)}   \right)^{6} ,  \\ 
E_{2}(\tau)  &=  {}_{2}F_{1} \left( \frac{1}{12}, \frac{5}{12} ; 1 ; \frac{1728}{j(\tau)}   \right) {}_{2}F_{1} \left( -\frac{1}{12}, \frac{7}{12} ; 1 ; \frac{1728}{j(\tau)}   \right) .
\end{align*}
\end{prop}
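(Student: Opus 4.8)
The plan is to treat the $E_{4}$ identity as the crux and to deduce the other two from it. Throughout write $t = 1728/j(\tau)$, so that $t \to 0$ as $\tau \to i\infty$, and record the purely algebraic consequence of $j = E_{4}^{3}/\Delta$ and $1728\Delta = E_{4}^{3} - E_{6}^{2}$, namely $1 - t = E_{6}^{2}/E_{4}^{3}$. Fixing branches by the normalization $E_{4}, E_{6} \to 1$ (hence $(1-t)^{1/2} \to 1$) as $\tau \to i\infty$, this reads $E_{6} = E_{4}^{3/2}(1-t)^{1/2}$. Consequently, once the $E_{4}$ formula $E_{4} = F^{4}$ with $F \coloneqq {}_{2}F_{1}(1/12, 5/12 ; 1 ; t)$ is established, the $E_{6}$ formula $E_{6} = (1-t)^{1/2}F^{6}$ is immediate.

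For the $E_{4}$ identity I would use the classical uniformization of the compactified modular orbifold $X(1) = \Gamma \backslash \mathfrak{H}$. The three special points $j = \infty, 1728, 0$ correspond to $t = 0, 1, \infty$ and carry the orbifold data of a cusp and of elliptic points of orders $2$ and $3$; the associated exponent differences $0, \tfrac12, \tfrac13$ match exactly those of the hypergeometric equation $E(1/12, 5/12 ; 1)$, whose exponent differences at $0, 1, \infty$ are $|1-c| = 0$, $|c-a-b| = \tfrac12$, $|a-b| = \tfrac13$. Hence $\tau$, as the (multivalued) inverse of $j$, is a ratio $\tau = y_{2}/y_{1}$ of two solutions of this equation in the variable $t$, with $y_{1}$ the solution holomorphic at $t=0$. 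To identify $y_{1}$ with $E_{4}^{1/4}$, I would compute $D(t)$ from Ramanujan's relations \eqref{eq:DEisen}: from $D(j) = -E_{4}^{2}E_{6}/\Delta$ one gets $D(t) = t\,E_{6}/E_{4} = t(1-t)^{1/2}E_{4}^{1/2}$, so that $d\tau/dt = 1/(2\pi i\,D(t)) = (2\pi i)^{-1} t^{-1}(1-t)^{-1/2}E_{4}^{-1/2}$. Comparing with $d\tau/dt = W[y_{1},y_{2}]/y_{1}^{2}$ and the known Wronskian $W \propto t^{-1}(1-t)^{-1/2}$ of $E(1/12,5/12;1)$ forces $y_{1}^{2} \propto E_{4}^{1/2}$, i.e. $E_{4}^{1/4}$ is proportional to the holomorphic solution $F$; the normalization $E_{4} \to 1 \leftarrow F$ at $t=0$ then gives $E_{4}^{1/4} = F$.

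An alternative, fully self-contained route to the same identity avoids uniformization and verifies directly that $\phi \coloneqq E_{4}^{1/4}$ solves $E(1/12,5/12;1)$: from $D(t) = t(1-t)^{1/2}\phi^{2}$ and $D(\phi) = \tfrac{1}{12}\phi\bigl(E_{2} - \phi^{2}(1-t)^{1/2}\bigr)$ one solves for $E_{2}$ in terms of $\phi$ and $d\phi/dt$, applies $D$ once more using $D(E_{2}) = (E_{2}^{2}-E_{4})/12$, and eliminates $E_{2}$; the quadratic terms cancel and one is left with the linear equation $t(1-t)\phi'' + (1 - \tfrac32 t)\phi' - \tfrac{5}{144}\phi = 0$, which is exactly $E(1/12,5/12;1)$. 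Either way, the main obstacle is this step — pinning down that $E_{4}^{1/4}$ is governed by precisely these hypergeometric parameters — whether via the Schwarzian/orbifold argument (conceptually clean, but requiring the uniformization theory of $X(1)$) or via the differential-algebra computation (elementary, but with nontrivial bookkeeping to witness the cancellation down to a linear ODE).

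Finally, the $E_{2}$ formula follows from $E_{2} = D(\log\Delta)$. Writing $\Delta = E_{4}^{3}\,t/1728$ and using $D(t) = t(1-t)^{1/2}F^{2}$ together with $E_{4} = F^{4}$ yields $E_{2} = (1-t)^{1/2}F\,(F + 12t\,F')$, where $F' = dF/dt$. It then remains to recognize the bracketed factor as a contiguous series, i.e. to prove $(1-t)^{1/2}(F + 12t F') = {}_{2}F_{1}(-1/12, 7/12 ; 1 ; t)$. I would establish this by checking that both sides satisfy $E(-1/12,7/12;1)$ (the left-hand side via Euler's transformation \eqref{eq:Euler} and the standard derivative formula for ${}_{2}F_{1}$) and agree at $t=0$; a direct comparison of the first Taylor coefficients already confirms the identity. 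This gives $E_{2} = F \cdot {}_{2}F_{1}(-1/12,7/12;1;t)$, completing the proposition.
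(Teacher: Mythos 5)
Your proof is correct; note that the paper itself does not prove this proposition but simply cites \cite{stiller1988classical} for the $E_{4}$ and $E_{6}$ identities and \cite{nakaya2023determination} for the $E_{2}$ identity, so your write-up is a self-contained derivation of facts the paper treats as classical. Both of your routes to $E_{4}^{1/4}=F$ are sound, and I verified that in the differential-algebra route the quadratic terms do cancel, leaving exactly $t(1-t)\phi''+(1-\tfrac{3}{2}t)\phi'-\tfrac{5}{144}\phi=0$, which is $E(\tfrac{1}{12},\tfrac{5}{12};1)$. Two points deserve an explicit sentence. First, in either route the identification of $E_{4}^{1/4}$ with the specific solution $F$ uses that $c=1$ is the logarithmic case, so the solutions holomorphic at $t=0$ form a one-dimensional space; you should also record that $E_{4}^{1/4}$ is indeed a holomorphic function of $t$ near $t=0$ (invert $t=1728q+O(q^{2})$), after which the normalization at $t=0$ pins it down. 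Second, for the identity $(1-t)^{1/2}(F+12tF')={}_{2}F_{1}(-\tfrac{1}{12},\tfrac{7}{12};1;t)$ the cleanest verification is the standard contiguous relation $\bigl(t\tfrac{d}{dt}+a\bigr){}_{2}F_{1}(a,b;c;t)=a\,{}_{2}F_{1}(a+1,b;c;t)$ with $a=\tfrac{1}{12}$, which gives $F+12tF'={}_{2}F_{1}(\tfrac{13}{12},\tfrac{5}{12};1;t)$, followed by Euler's transformation \eqref{eq:Euler}; your ODE-plus-initial-value argument is also valid, again because both sides are holomorphic at $t=0$ where only a one-dimensional space of such solutions exists.
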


We introduce the Serre derivative (or Ramanujan--Serre derivative) $\partial_{k}$ defined by
\begin{align*}
\partial_{k} = D - \frac{k}{12} E_{2} .
\end{align*}
From this definition, it is clear that Leibniz rule $\partial_{k+l}(f g)=\partial_{k}(f)g+f\partial_{l}(g)$ is satisfied. We use the following symbols for iterated Serre derivative according to convention:
\begin{align*}
\partial_{k}^{0}(f) = f, \quad \partial_{k}^{n+1}(f) = \partial_{k+2n} \circ \partial_{k}^{n}(f) \quad (n\ge0).
\end{align*}
It is well known that $\partial_{k-r} : QM_{k}^{(r)} \rightarrow QM_{k+2}^{(r)}$ for even $k$ and $r \in \mathbb{Z}_{\ge0}$, which is a special case of Proposition 3.3 in \cite{kaneko2006extremal}.
By rewriting \eqref{eq:DEisen} using the Serre derivative, we obtain the following useful consequences. 
\begin{align}
\partial_{1}(E_{2}) = -\frac{1}{12} E_{4}, \quad \partial_{4}(E_{4}) = -\frac{1}{3} E_{6}, \quad \partial_{6}(E_{6}) = -\frac{1}{2} E_{4}^{2}. \label{eq:exSerred}
\end{align}

Now consider the following differential equation, called the Kaneko--Zagier equation or, in a more general context, a second-order modular linear differential equation: 
\begin{align}
D^{2}(f) - \frac{w}{6} E_{2} D(f) + \frac{w(w-1)}{12} D(E_{2}) f =0. \label{eq:2ndKZeqn}
\end{align}
This differential equation (with $w$ replaced by $k+1$) first appeared in the study of the $j$-invariants of supersingular elliptic curves in \cite{kaneko1998supersingular} and characterized in \cite[\S 5]{kaneko2003modular}. 
As stated in \cite{kaneko2006extremal}, the normalized extremal quasimodular form $G_{6n}$ is the solution of \eqref{eq:2ndKZeqn} for $w=6n$.
On the other hand, by change of variables $z=1728/j(\tau )$ and putting $g(\tau ) = E_{4}(\tau )^{-(w-1)/4} f(\tau )$, the Kaneko--Zagier equation \eqref{eq:2ndKZeqn} is transformed into a hypergeometric differential equation. 
On the basis of these facts, a hypergeometric expression of $G_{k}$ was given in \cite{nakaya2023determination} as follows.

\begin{prop} \label{prop:Gto2F1}
The normalized extremal quasimodular forms of even weight and depth $1$ on $\Gamma$ have the following hypergeometric expressions.
\begin{align*}
G_{6n}(\tau ) &= j(\tau )^{-n} {}_{2}F_{1} \left( \frac{1}{12}, \frac{5}{12} ; 1 ; \frac{1728}{j(\tau )} \right)^{6n-1} {}_{2}F_{1} \left( \frac{6n+1}{12}, \frac{6n+5}{12} ; n+1 ; \frac{1728}{j(\tau )} \right) , \\
G_{6n+2}(\tau ) &= j(\tau )^{-n}  {}_{2}F_{1} \left( \frac{1}{12}, \frac{5}{12} ; 1 ; \frac{1728}{j(\tau )}  \right)^{6n+1} {}_{2}F_{1} \left( \frac{6n-1}{12}, \frac{6n+7}{12} ; n+1 ; \frac{1728}{j(\tau )}  \right) , \\
G_{6n+4}(\tau ) &= E_{4}(\tau ) G_{6n}(\tau ) .
\end{align*}
\end{prop}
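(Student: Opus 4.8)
The plan is to exploit the characterization, recalled just before the Proposition, that the depth-$1$ normalized extremal quasimodular forms are the distinguished solutions of the Kaneko--Zagier equation \eqref{eq:2ndKZeqn} (for the weight $\equiv 0 \pmod 6$ family) or of its analogue (for the $\equiv 2 \pmod 6$ family), and to solve these second-order modular linear differential equations explicitly by reducing them to the Gaussian hypergeometric equation. Throughout I set $z = 1728/j$, so that $z\to 0$ as $\tau\to i\infty$.

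First I would treat $G_{6n}$. Using $D(\log\Delta) = E_2$ together with the Ramanujan relations \eqref{eq:DEisen}, one finds $D(j) = -E_4^2 E_6/\Delta$ and hence $D(z) = (E_6/E_4)\,z$; this is the only serious computation needed to change the independent variable from $\tau$ to $z$. Substituting $f = E_4^{(6n-1)/4} g$ into \eqref{eq:2ndKZeqn} with $w=6n$ and passing to the variable $z$ turns the equation into a hypergeometric differential equation for $g(z)$ (this is exactly the transformation referred to in the paragraph preceding the Proposition). The three singular points $z=0,1,\infty$ correspond to the cusp, $\tau=i$, and $\tau=\rho$, and I would fix the parameters $a,b,c$ by matching the indicial exponents there: the exponents $\{0,1/2\}$ at $z=1$ (from the simple zero of $E_6$ at $\tau=i$) together with the exponents at $z=0$ force $c = 1-n$, $a = \tfrac{1-6n}{12}$, $b = \tfrac{5-6n}{12}$. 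Since $c=1-n$ is a non-positive integer, the two local exponents $\{0,\,1-c\}=\{0,n\}$ at $z=0$ differ by the integer $n$; the extremal condition $G_{6n} = q^{n}(1+O(q))$ selects the \emph{larger} exponent, whose Frobenius solution carries no logarithm and equals, up to a constant, $z^{n}\,{}_{2}F_{1}(a+n,\,b+n;\,n+1;\,z) = z^{n}\,{}_{2}F_{1}\!\left(\tfrac{6n+1}{12},\tfrac{6n+5}{12};n+1;z\right)$.

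Translating back via $f = E_4^{(6n-1)/4} g$ and $E_4 = {}_2F_1(\tfrac1{12},\tfrac5{12};1;z)^4$ (Proposition \ref{prop:EisensteinHyp}), and using $z^n = (1728/j)^n$, yields $G_{6n} = \mathrm{const}\cdot j^{-n}\,{}_2F_1(\tfrac1{12},\tfrac5{12};1;z)^{6n-1}\,{}_2F_1(\tfrac{6n+1}{12},\tfrac{6n+5}{12};n+1;z)$. The constant is pinned to $1$ by the normalization: since $j^{-1} = q + O(q^2)$ and both hypergeometric factors are $1 + O(q)$, the right-hand side is $q^n(1 + O(q))$, matching the normalized extremal form. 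The case $G_{6n+2}$ is entirely parallel, starting instead from the analogue of \eqref{eq:2ndKZeqn} satisfied by this family and with $f = E_4^{(6n+1)/4} g$; the same indicial analysis gives $c = 1-n$, $a = -\tfrac{6n+1}{12}$, $b = \tfrac{7-6n}{12}$, hence the stated second hypergeometric parameters after the shift by $n$. Finally, $G_{6n+4} = E_4 G_{6n}$ follows without any differential equation: $E_4 G_{6n}$ is a depth-$1$ quasimodular form of weight $6n+4$ and, since $E_4 = 1+O(q)$, has the same vanishing order $n$ as $G_{6n}$, which a short dimension count shows equals $\dim_{\mathbb C} QM_{6n+4}^{(1)}-1$; uniqueness of the normalized extremal form then gives the identity (cf.\ the example $G_{10} = E_4 G_6$).

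The main obstacle I anticipate is twofold. The routine but error-prone part is the change of variables reducing \eqref{eq:2ndKZeqn} to hypergeometric form and the correct determination of $a,b,c$; this is best organized through the indicial equations at $z=0,1,\infty$ rather than by brute-force substitution, with the exponents $\{0,1/2\}$ at the elliptic point $\tau=i$ serving as a useful consistency check. The genuinely delicate point is that $c=1-n$ is a non-positive integer, so a priori a logarithmic solution could intrude; the resolution is that $G_{6n}$ is a bona fide $q$-power series, hence necessarily the larger-exponent (log-free) Frobenius solution, and one must also check that the resulting ${}_2F_1(\cdot,\cdot;n+1;z)$ is well defined (its upper parameters are not non-positive integers), so that the formula is unambiguous. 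A convenient simplification at the end, rather than an obstacle, is that Euler's transformation \eqref{eq:Euler} may be invoked to rewrite the hypergeometric factors should an equivalent parameter normalization be preferred.
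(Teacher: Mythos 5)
Your proposal is correct and follows exactly the route the paper itself indicates: the paper gives no proof of Proposition~\ref{prop:Gto2F1} (it cites \cite{nakaya2023determination}), but the preceding paragraph describes precisely your method — the substitution $g=E_{4}^{-(w-1)/4}f$ and the change of variable $z=1728/j$ reducing the Kaneko--Zagier equation to a hypergeometric equation, with the extremality/normalization condition selecting the log-free Frobenius solution of exponent $n$ at $z=0$. Your parameter values, the handling of the resonant case $c=1-n$, and the reduction of $G_{6n+4}=E_{4}G_{6n}$ to uniqueness all check out.
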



\section{Closed formulas of Atkin-like polynomials and their adjoint polynomials} \label{sec:clfAtkinpoly}

We first prove two recursion formulas that the normalized extremal quasimodular forms satisfy by computing certain differential operators. 
It is easy to rewrite the differential equation \eqref{eq:2ndKZeqn} to $L_{w}(f)=0$ by using \eqref{eq:exSerred}, where
\begin{align}
L_{w} = \partial_{w-1}^{2} -\frac{w^{2}-1}{144} E_{4}  . \label{eq:dopLw}
\end{align}
We also define the differential operators $K_{w}^{\mathrm{up}}$ as
\begin{align}
K_{w}^{\mathrm{up}} = E_{4} \partial_{w-1} - \frac{w+1}{12} E_{6} . \label{eq:dopKup} 
\end{align}
Then the normalized extremal quasimodular forms of depth 1 satisfy the following differential recursions (see \cite[Prop.~6.1]{grabner2020quasimodular}).
\begin{align}
&{} G_{0} = 1,\quad G_{w+6} = \frac{w+6}{72(w+1)(w+5)} K_{w}^{\mathrm{up}}(G_{w}), \label{eq:recGw0} \\
&{} G_{w+2} = \frac{12}{w+1} \partial_{w-1} (G_{w}) , \quad G_{w+4} = E_{4} G_{w}. \label{eq:recGw24}
\end{align}

\begin{prop} \label{prop:linrecG}
For $w\equiv 0 \pmod{6}$, the normalized extremal quasimodular forms $G_{w}$ and $G_{w+2}$ satisfy the following recursions:
\begin{align}
G_{w+12} &= - \frac{(w+6)(w+12)}{432(w+7)(w+11)} \left( E_{6} G_{w+6} - \Delta G_{w} \right) , \label{eq:d1linrecGw}  \\ 
G_{w+14} &= - \frac{(w+6)(w+12)}{432(w+5)(w+13)} \left( E_{6} G_{w+8} - \Delta G_{w+2} \right) . \label{eq:d1linrecGw+2}
\end{align}
For the initial values $G_{2}, G_{6}$, and $G_{8}$, see Section \ref{sec:intro}.
\end{prop}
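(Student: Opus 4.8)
The plan is to work entirely inside the two-dimensional space spanned by $G_{w}$ and its Serre derivative $\partial_{w-1}(G_{w})$. On this space the Kaneko--Zagier equation \eqref{eq:2ndKZeqn}, rewritten as $L_{w}(G_{w})=0$ through \eqref{eq:dopLw}, that is
\[
\partial_{w-1}^{2}(G_{w})=\frac{w^{2}-1}{144}\,E_{4}\,G_{w},
\]
lets me eliminate every second (hence every higher) Serre derivative of $G_{w}$. Since $w\equiv0\pmod6$, all the recursions \eqref{eq:recGw0} and \eqref{eq:recGw24} are legitimate at the weights $w$ and $w+6$, and I will use them freely there.

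To prove \eqref{eq:d1linrecGw} I would set $c=\tfrac{w+6}{72(w+1)(w+5)}$ and use \eqref{eq:recGw0} to write
\[
G_{w+6}=c\,K_{w}^{\mathrm{up}}(G_{w})=c\Bigl(E_{4}\,\partial_{w-1}(G_{w})-\tfrac{w+1}{12}E_{6}G_{w}\Bigr),
\]
and, applying \eqref{eq:recGw0} once more at weight $w+6$, express $G_{w+12}$ through $K_{w+6}^{\mathrm{up}}(G_{w+6})$. Expanding $K_{w+6}^{\mathrm{up}}(G_{w+6})$ by the Leibniz rule for $\partial$ together with the values $\partial_{4}(E_{4})=-\tfrac13E_{6}$ and $\partial_{6}(E_{6})=-\tfrac12E_{4}^{2}$ from \eqref{eq:exSerred}, and then replacing $\partial_{w-1}^{2}(G_{w})$ by means of $L_{w}(G_{w})=0$, turns the claimed identity into a relation among $E_{4},E_{6},G_{w}$ and $\partial_{w-1}(G_{w})$. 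Combining this with the weight-$(w+6)$ case of the first relation of \eqref{eq:recGw24}, namely $\partial_{w+5}(G_{w+6})=\tfrac{w+7}{12}G_{w+8}$, reduces the whole of \eqref{eq:d1linrecGw} to the three-term relation
\[
(w+7)\,E_{4}\,G_{w+8}+(w+5)\,E_{6}\,G_{w+6}=2(w+6)\,\Delta\,G_{w}.
\]
Substituting the expression above for $G_{w+6}$ and the analogous one for $G_{w+8}$ (obtained by one further Serre differentiation and another use of $L_{w}(G_{w})=0$), the coefficient of $\partial_{w-1}(G_{w})$ cancels identically, while the coefficient of $G_{w}$ is a constant multiple of $E_{4}^{3}-E_{6}^{2}=1728\Delta$; the prefactors then match exactly.

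For \eqref{eq:d1linrecGw+2} I would differentiate \eqref{eq:d1linrecGw}. As $w+12\equiv0\pmod6$, the first relation of \eqref{eq:recGw24} at weight $w+12$ gives $G_{w+14}=\tfrac{12}{w+13}\partial_{w+11}(G_{w+12})$, so I apply $\partial_{w+11}$ to the identity \eqref{eq:d1linrecGw} just established. Using the Leibniz rule, $\partial_{6}(E_{6})=-\tfrac12E_{4}^{2}$, the vanishing $\partial_{12}(\Delta)=D(\Delta)-E_{2}\Delta=0$, and the relations $\partial_{w+5}(G_{w+6})=\tfrac{w+7}{12}G_{w+8}$ and $\partial_{w-1}(G_{w})=\tfrac{w+1}{12}G_{w+2}$ from \eqref{eq:recGw24}, the right-hand side becomes a combination of $E_{4}^{2}G_{w+6}$, $E_{6}G_{w+8}$ and $\Delta\,G_{w+2}$. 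Comparing with the target \eqref{eq:d1linrecGw+2} reduces the claim to the companion three-term relation
\[
(w+5)\,E_{4}^{2}\,G_{w+6}+(w+7)\,E_{6}\,G_{w+8}=2(w+6)\,\Delta\,G_{w+2},
\]
which I establish exactly as before: writing $G_{w+2},G_{w+6},G_{w+8}$ in terms of $G_{w}$ and $\partial_{w-1}(G_{w})$ and reducing by $L_{w}(G_{w})=0$, this time the coefficient of $G_{w}$ cancels and the coefficient of $\partial_{w-1}(G_{w})$ is the expected multiple of $E_{4}^{3}-E_{6}^{2}=1728\Delta$.

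I expect the main obstacle to be keeping track of which recursions are genuinely available. The operator $K_{w}^{\mathrm{up}}$ and the numerical constants in \eqref{eq:recGw24} are tied to the residue $w\equiv0\pmod6$; in particular the degree-raising-by-six step for weights $\equiv2\pmod6$ is governed by a different operator, so one cannot prove \eqref{eq:d1linrecGw+2} by naively mirroring the two-step $K^{\mathrm{up}}$-computation that handles \eqref{eq:d1linrecGw}. Routing the second identity through the Serre derivative of the first (and through the companion three-term relation) is precisely what sidesteps this. The remaining work is the routine but delicate tracking of the Serre-derivative weights in each Leibniz expansion and the verification that, once $L_{w}(G_{w})=0$ is invoked, both three-term relations telescope onto $E_{4}^{3}-E_{6}^{2}=1728\Delta$ with the exact rational prefactors demanded by \eqref{eq:d1linrecGw} and \eqref{eq:d1linrecGw+2}.
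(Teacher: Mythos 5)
Your argument is correct, and it diverges from the paper's proof in an interesting way for the second recursion. For \eqref{eq:d1linrecGw} the two proofs are essentially the same computation: expanding $K_{w+6}^{\mathrm{up}}\circ K_{w}^{\mathrm{up}}(G_{w})$ in the basis $\{G_{w},\partial_{w-1}(G_{w})\}$ and reducing by $L_{w}(G_{w})=0$ is exactly what the paper packages into the operator identity \eqref{eq:Kupup}; your intermediate relation $(w+7)E_{4}G_{w+8}+(w+5)E_{6}G_{w+6}=2(w+6)\Delta G_{w}$ does check out (the $\partial_{w-1}(G_{w})$-terms cancel and the $G_{w}$-coefficient collapses onto $E_{4}^{3}-E_{6}^{2}=1728\Delta$ with the correct prefactor, since $144\cdot\tfrac{w+6}{72(w+1)(w+5)}\cdot(w+1)(w+5)=2(w+6)$). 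For \eqref{eq:d1linrecGw+2} your route is genuinely different: the paper manufactures a weight-raising operator $K_{w,2}^{\mathrm{up}}$ adapted to the residue class $w+2\equiv 2\pmod 6$ via the operator identity \eqref{eq:partialKup}, proves $G_{w+8}=\tfrac{w+6}{72(w-1)(w+7)}K_{w,2}^{\mathrm{up}}(G_{w+2})$, and then repeats the two-step composition argument with a new annihilating operator $L_{w,2}$ (checking $L_{w,2}(G_{w+2})=0$ along the way). You instead apply $\partial_{w+11}$ to the already-established \eqref{eq:d1linrecGw}, use $G_{w+14}=\tfrac{12}{w+13}\partial_{w+11}(G_{w+12})$ and $\partial_{12}(\Delta)=0$, and reduce the claim to the companion identity $(w+5)E_{4}^{2}G_{w+6}+(w+7)E_{6}G_{w+8}=2(w+6)\Delta G_{w+2}$, which telescopes onto $1728\Delta$ in the same two-dimensional space (I verified the bookkeeping: this time the $G_{w}$-terms cancel and the $\partial_{w-1}(G_{w})$-coefficient is $c(w+5)(E_{4}^{3}-E_{6}^{2})$ with $c=\tfrac{w+6}{72(w+1)(w+5)}$, which together with $\partial_{w-1}(G_{w})=\tfrac{w+1}{12}G_{w+2}$ gives exactly $2(w+6)\Delta G_{w+2}$). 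Your version is leaner --- it avoids introducing $K_{w,2}^{\mathrm{up}}$ and $L_{w,2}$ entirely --- at the mild cost of making the second recursion logically dependent on the first rather than running a self-contained parallel argument on the weight-$\equiv 2$ branch. You are also right to flag the residue-class issue: all the instances of \eqref{eq:recGw24} you invoke are at weights $w$, $w+6$, $w+12\equiv 0\pmod 6$, so they are legitimately available.
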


\begin{proof}
We note that the following identity of the differential operators is valid.
\begin{align}
K_{w+6}^{\mathrm{up}} \circ K_{w}^{\mathrm{up}} = E_{4}^{2} L_{w} -\frac{w+6}{6} E_{6} K_{w}^{\mathrm{up}} + 12(w+1)(w+5) \Delta . \label{eq:Kupup}
\end{align}
Then we apply \eqref{eq:recGw0} inductively twice, and using $L_{w}(G_{w})=0$ and \eqref{eq:Kupup} we obtain the desired recursion \eqref{eq:d1linrecGw}:
\begin{align*}
G_{w+12} &= \frac{(w+6)(w+12)}{72^{2}(w+1)(w+5)(w+7)(w+11)} K_{w+6}^{\mathrm{up}} \circ K_{w}^{\mathrm{up}}(G_{w}) \\
&= - \frac{(w+6)(w+12)}{432(w+7)(w+11)} \left\{  \frac{w+6}{72(w+1)(w+5)} E_{6} K_{w}^{\mathrm{up}}(G_{w}) - \Delta G_{w} \right\}  \\
&= - \frac{(w+6)(w+12)}{432(w+7)(w+11)} \left(  E_{6} G_{w+6} - \Delta G_{w} \right) .
\end{align*}
Since we now assume $w \equiv 0 \pmod{6}$, we should not simply replace $w$ by $w+2$ in \eqref{eq:d1linrecGw}. Theredore, we focus on the following identity of the differential operators.
\begin{align}
\partial_{w+5} \circ K_{w}^{\mathrm{up}} = - \frac{6}{w-1} E_{4} L_{w} + \frac{w+5}{w-1} K_{w,2}^{\mathrm{up}} \circ \partial_{w-1}, \quad K_{w,2}^{\mathrm{up}} \coloneqq E_{4} \partial_{w+1} - \frac{w-1}{12} E_{6}. \label{eq:partialKup}
\end{align}
From this identity, we have the following formula for $G_{w+2}$, similar to \eqref{eq:recGw0}:
\begin{align*}
G_{w+8}^{(1)} &= \frac{12}{w+7} \partial_{w+5}(G_{w+6}) \quad (\text{by \eqref{eq:recGw24}}) \\
&= \frac{w+6}{6(w+1)(w+5)(w+7)} \partial_{w+5} \circ K_{w}^{\mathrm{up}}(G_{w}) \quad (\text{by \eqref{eq:recGw0}}) \\
&= \frac{w+6}{6(w-1)(w+1)(w+7)} K_{w,2}^{\mathrm{up}} \circ \partial_{w-1}(G_{w}) \quad (\text{by \eqref{eq:partialKup}}) \\
&= \frac{w+6}{72(w-1)(w+7)} K_{w,2}^{\mathrm{up}}(G_{w+2}) \quad (\text{by \eqref{eq:recGw24}}).
\end{align*}
The recursion formula \eqref{eq:d1linrecGw+2} is obtained by computing $K_{w+6,2}^{\mathrm{up}} \circ K_{w,2}^{\mathrm{up}}$ in the same way as for $G_{w}$, but then we use the following differential operator $L_{w,2}$ instead of $L_{w}$:
\begin{align*}
L_{w,2} \coloneqq E_{4} \partial_{w+1}^{2} +\frac{1}{3} E_{6} \partial_{w+1} - \frac{w^{2} -1}{144} E_{4}^{2}.
\end{align*}
Note that $L_{w,2}(G_{w+2}) =0$ because $L_{w,2} \circ \partial_{w-1} = \left( E_{4} \partial_{w+3} + \frac{1}{3} E_{6} \right) \circ L_{w}$. 
\end{proof}

\begin{rem}
\begin{enumerate}[(i)]
\item Recursions equivalent to \eqref{eq:d1linrecGw} and \eqref{eq:d1linrecGw+2} have already appeared in \cite[Prop.~1]{kaneko2003modular} and \cite[p.~270]{sakai2010modular}, respectively.
The difference in the appearance of these recursions in our paper and theirs is due to the different  normalization of the extremal quasimodular forms. 
\item We can take the recursions in Proposition \ref{prop:linrecG} to define  $G_{w}$ and $G_{w+2}$ instead of the differential recursions \eqref{eq:recGw0} and \eqref{eq:recGw24}. If we adopt this definition, it is clear from the recursions that the Fourier expansions of $G_{w}$ and $G_{w+2}$ are both of the form $q^{w/6}(1+O(q))$. 
Note also that if we rewrite these recursions using Proposition \ref{prop:Gto2F1}, they correspond to a contiguous relation formula of a certain hypergeometric series. 
\end{enumerate}
\end{rem}

Following the notation of \cite{kaneko2003modular}, we denote by $\eqref{eq:d1linrecGw}_{w+a}$ the expression in equation  \eqref{eq:d1linrecGw} with the parameter $w$ changed to $w+a$ (we assume that $a\equiv 0 \pmod{6}$).
Eliminating $G_{w+6}$ and $G_{w+18}$ from three equations $\eqref{eq:d1linrecGw}_{w}, \eqref{eq:d1linrecGw}_{w+6}$ and $\eqref{eq:d1linrecGw}_{w+12}$ yields
\begin{align}
\begin{split}
G_{w+24} &=  \frac{(w+12)(w+18)^{2}(w+24)}{2^{8} 3^{6} (w+13)(w+17)(w+19)(w+23)} \\
&{}\quad \times \left\{ \left( E_{4}^{3} - \frac{864(w^{2} +24w +103)}{(w+6)(w+18)} \Delta \right) G_{w+12} - \Delta^{2} G_{w} \right\} . \label{eq:d1linrecG24120}
\end{split}
\end{align}
In the same way for \eqref{eq:d1linrecGw+2}, we also obtain
\begin{align}
\begin{split}
G_{w+26} &= \frac{(w+12)(w+18)^{2}(w+24)}{2^{8} 3^{6} (w+11)(w+17)(w+19)(w+25)} \\
&{} \quad \times \left\{ \left( E_{4}^{3} - \frac{864(w^{2} +24w +115)}{(w+6)(w+18)} \Delta \right) G_{w+14} - \Delta^{2} G_{w+2} \right\} . \label{eq:d1linrecG26142}
\end{split}
\end{align}
Substituting expressions \eqref{eq:G12mAB} and \eqref{eq:G12m6AB} into \eqref{eq:d1linrecG24120}, and expressions \eqref{eq:G12m2AB} and \eqref{eq:G12m8AB} into \eqref{eq:d1linrecG26142}, we obtain the following three-term recursion formula, which is satisfied by the Atkin-like  polynomials and their adjoint polynomials: 
\begin{align}
W_{n+1,r}(X)=(X-a_{n,r})W_{n,r}(X)-b_{n,r}W_{n-1,r}(X) \quad (W \in \{A, B\}), \label{eq:d1linrecWAB}
\end{align}
where
\begin{align*}
a_{n,0} &= \frac{24 (144n^{2} -41)}{(2n+1)(2n-1)}, \quad b_{n,0} = \frac{36(12n-11)(12n-7)(12n-5)(12n-1)}{n(n-1)(2n-1)^{2}}  , \\
a_{n,2} &= \frac{24 (144n^{2} -29)}{(2n+1)(2n-1)}, \quad b_{n,2} = \frac{36(12n-13)(12n-7)(12n-5)(12n+1)}{n(n-1)(2n-1)^{2}} , \\
a_{n,6} &= a_{n+1/2,0}, \quad a_{n,8} = a_{n+1/2,2}, \quad b_{n,6} = b_{n+1/2,0}, \quad b_{n,8} = b_{n+1/2,2} . 
\end{align*}
Note that these recursions hold for $n\ge2$ if $r\in \{0,2\}$ and for $n\ge1$ if $r\in \{6,8\}$. The initial values are shown in the following table.
\begin{table}[H]
\caption{Initial values of $A_{n,r}(X)$ and $B_{n,r}(X)$.}
\begin{center}
\begin{tabular}{|c|c|c|c|c|} \hline
$n$ & $A_{n,0}(X)$ & $A_{n,2}(X)$ & $A_{n,6}(X)$ & $A_{n,8}(X)$ \\ \hline
$0$ & $0$ & $1$ & $1$ & $1$ \\ 
$1$ & $1$ & $X-720$ & $X-1266$ & $X-330$ \\ 
$2$ & $X-824$ & $X^{2}-1640X+269280$ &  &  \\ \hline 
$n$ & $B_{n,0}(X)$ & $B_{n,2}(X)$ & $B_{n,6}(X)$ & $B_{n,8}(X)$ \\ \hline
$0$ & $1$ & $0$ & $1$ & $1$ \\ 
$1$ & $X-1008$ & $1$ & $X-546$ & $X-1338$ \\ 
$2$ & $X^{2}-1832 X+497952$ & $X-920$ &  &  \\ \hline 
\end{tabular}
\end{center}
\end{table}

The following theorem is a generalization of Theorem 4 ii) in \cite{kaneko1998supersingular} to Atkin-like polynomials and their adjoint  polynomials.

\begin{thm} \label{thm:explicitformulaAB}
The Atkin-like polynomials $A_{n,r}(X)$ and their adjoint polynomials $B_{n,r}(X)$ have the following closed formula. These formulas are valid for $n\ge1$ if $r \in \{0,2\}$ and for $n\ge0$ if $r \in \{6,8\}$.
\begin{align*}
A_{n,r}(X) &= \sum_{i=0}^{d_{n,r}} 12^{3i} \phi_{i}(A_{n,r}) X^{d_{n,r} -i}, \quad B_{n,r}(X) = \sum_{i=0}^{d_{n,r+2}} 12^{3i} \phi_{i}(B_{n,r}) X^{d_{n,r+2} -i} , 
\end{align*}
where the degree is $d_{n,r}= \dim_{\mathbb{C}} M_{12n+r-2} -1 = \dim_{\mathbb{C}} S_{12n+r-2}$, and the coefficients are given by the following binomial sums:
\begin{align*}
\phi_{i}(A_{n,r}) &= \sum_{k=0}^{i} (-1)^{k} \binom{ -\tfrac{1}{12}}{i-k} \binom{ -\tfrac{5}{12}}{i-k} \binom{\kappa_{1}(A_{n,r})}{k} \binom{\kappa_{2}(A_{n,r})}{k} \binom{\kappa_{3}(A_{n,r})}{k}^{-1} , \\
\phi_{i}(B_{n,r}) &= \sum_{k=0}^{i} (-1)^{k} \binom{ \tfrac{1}{12}}{i-k} \binom{ -\tfrac{7}{12}}{i-k} \binom{\kappa_{1}(B_{n,r})}{k} \binom{\kappa_{2}(B_{n,r})}{k} \binom{\kappa_{3}(B_{n,r})}{k}^{-1} .
\end{align*}
Here the numbers $d_{n,r}, d_{n,r+2}$ and $\kappa_{i}(W_{n,r}) \; (1\le i \le 3,\, W\in \{A,B\})$ are summarized in the table below:
\begin{table}[H]
\caption{Parameters in the closed formulas of $A_{n,r}(X)$ and $B_{n,r}(X)$.}
\begin{center}
\begin{tabular}{|c|c|ccc|c|ccc|} \hline
\rule[-2pt]{0pt}{13pt}$r$ & $d_{n,r}$ & $\kappa_{1}(A_{n,r})$ & $\kappa_{2}(A_{n,r})$ & $\kappa_{3}(A_{n,r})$ & $d_{n,r+2}$ & $\kappa_{1}(B_{n,r})$ & $\kappa_{2}(B_{n,r})$ & $\kappa_{3}(B_{n,r})$ \\  \hline
\rule[-2pt]{0pt}{15pt}$0$ & $n-1$ & $n-\tfrac{11}{12}$ & $n-\tfrac{7}{12}$ & $2n-1$ & $n$ & $n-\tfrac{1}{12}$ & $n-\tfrac{5}{12}$ & $2n-1$ \\ 
\rule[-2pt]{0pt}{15pt}$2$ & $n$ & $n+\tfrac{1}{12}$ & $n-\tfrac{7}{12}$ & $2n-1$ & $n-1$ & $n-\tfrac{13}{12}$ & $n-\tfrac{5}{12}$ & $2n-1$ \\ 
\rule[-2pt]{0pt}{15pt}$6$ & $n$ & $n+\tfrac{1}{12}$ & $n+\tfrac{5}{12}$ & $2n$ & $n$ & $n-\tfrac{1}{12}$ & $n-\tfrac{5}{12}$ & $2n$ \\ 
\rule[-6pt]{0pt}{19pt}$8$ & $n$ & $n+\tfrac{1}{12}$ & $n-\tfrac{7}{12}$ & $2n$ & $n$ & $n-\tfrac{1}{12}$ & $n+\tfrac{7}{12}$ & $2n$ \\  \hline
\end{tabular}
\end{center}
\end{table}
\end{thm}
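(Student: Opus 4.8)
The plan is to obtain each closed formula by substituting the hypergeometric expressions of Propositions \ref{prop:Gto2F1} and \ref{prop:EisensteinHyp} into the representations \eqref{eq:G12mAB}--\eqref{eq:G12m8AB} of $G_{12n+r}$ and reading off the polynomial coefficient by coefficient. Write $z=1728/j$, $F={}_2F_1(\tfrac1{12},\tfrac5{12};1;z)$ and $\tilde F={}_2F_1(-\tfrac1{12},\tfrac7{12};1;z)$, so that Proposition \ref{prop:EisensteinHyp} reads $E_4=F^4$, $E_6=(1-z)^{1/2}F^6$, $E_2=F\tilde F$, and $\Delta=E_4^3/j=zF^{12}/1728$. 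The first observation is that in each of \eqref{eq:G12mAB}--\eqref{eq:G12m8AB} the polynomial $A_{n,r}$ is attached to the unique term carrying a factor $E_2$ and $B_{n,r}$ to the $E_2$-free term; that is, $A_{n,r}$ and $B_{n,r}$ are the depth-$1$ and depth-$0$ parts of $N_{n,r}G_{12n+r}$ divided by their explicit modular prefactors. This canonical depth decomposition is what separates $A$ from $B$, so no delicate algebraic splitting is required.

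Carrying out the substitution, say for $r=2$, and cancelling the common power $F^{12n+1}=E_4^{(12n+1)/4}$, the relation \eqref{eq:G12m2AB} collapses to a single identity of power series in $z$,
\begin{align*}
\Bigl(\tfrac{z}{1728}\Bigr)^{2n}\,{}_2F_1\Bigl(n-\tfrac1{12},\,n+\tfrac7{12};\,2n+1;\,z\Bigr)
=\tfrac{1}{N_{n,2}}\Bigl[\,\tilde F\,\tilde A(z)-(1-z)^{1/2}F\,\tilde B(z)\,\Bigr],
\end{align*}
where $\tilde A(z)=\sum_{i=0}^{n}\phi_i(A_{n,2})z^{i}$ and $\tilde B(z)=\sum_{i=0}^{n-1}\phi_i(B_{n,2})z^{i}$ are the reversed, rescaled polynomials (the substitution $j\mapsto1728/z$ is exactly what converts $A_{n,r}(X)=\sum_i12^{3i}\phi_iX^{d-i}$ into $\tilde A$, accounting for the factors $12^{3i}=1728^{i}$). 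Since the left side is $O(z^{2n})$, this is a Hermite--Pad\'e condition: $\tilde A,\tilde B$ of degrees $n,n-1$ are determined, up to the monic normalization, by requiring $\tilde F\,\tilde A-(1-z)^{1/2}F\,\tilde B=O(z^{2n})$. The same reduction applies verbatim to $r=0$, and to $r\in\{6,8\}$ after running the computation on the odd-index instances $G_{6(2n+1)},G_{6(2n+1)+2}$ of Proposition \ref{prop:Gto2F1}, consistent with $N_{m,6}=N_{m+1/2,0}$ and $N_{m,8}=N_{m+1/2,2}$.

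The closed formula is then recognized by rewriting $\phi_i$ as a Cauchy product. The two top binomials $\binom{-1/12}{\cdot}\binom{-5/12}{\cdot}$ and $\binom{1/12}{\cdot}\binom{-7/12}{\cdot}$ are exactly the Taylor coefficients of $F=E_4^{1/4}$ and of $\tilde F=E_2E_4^{-1/4}$ from Proposition \ref{prop:EisensteinHyp}, while the tabulated $(\kappa_1,\kappa_2,\kappa_3)$ yield $(-1)^k\binom{\kappa_1}{k}\binom{\kappa_2}{k}\binom{\kappa_3}{k}^{-1}=$ the $k$-th coefficient of the (truncated) series $\mathcal{H}={}_2F_1(-\kappa_1,-\kappa_2;-\kappa_3;z)$; here $-\kappa_3=1-2n$ (or $-2n$) forces truncation. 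Thus $\phi_i(A_{n,r})$ is the $i$-th coefficient of $F\,\mathcal{H}_A$ and $\phi_i(B_{n,r})$ that of $\tilde F\,\mathcal{H}_B$, where $\mathcal{H}_A,\mathcal{H}_B$ are the second Frobenius solutions at $z=0$ of the hypergeometric equation governing the inner series of $G$. I would then verify the Hermite--Pad\'e condition for the proposed $\tilde A=F\,\mathcal{H}_A$, $\tilde B=\tilde F\,\mathcal{H}_B$: substituting gives $F\tilde F\bigl(\mathcal{H}_A-(1-z)^{1/2}\mathcal{H}_B\bigr)$, and the Euler transformation \eqref{eq:Euler} yields $(1-z)^{1/2}\mathcal{H}_B=\mathcal{H}_A$ as formal series, so the bracket collapses and the required order of vanishing is produced, the residual being precisely the extremal hypergeometric on the left. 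The monic normalization is then checked against the initial values in the table.

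The main obstacle is polynomiality and exact degree control. One must show that $F\,\mathcal{H}_A$ and $\tilde F\,\mathcal{H}_B$ --- each a product of a genuine power series with a series that is only defined up to order $z^{2n-1}$ because $-\kappa_3\in\mathbb{Z}_{\le0}$ --- actually terminate as polynomials of the exact degrees $d_{n,r}=\dim_{\mathbb{C}}S_{12n+r-2}$ and $d_{n,r+2}$, with no spurious higher terms. Equivalently, one must justify the limit $c\to1-2n$ (resp. $-2n$) in the Euler relation and in the second-solution identification, and control the tail discarded by the truncation; this is where the specific arithmetic of the $\kappa_i$ enters and is the delicate step. As an alternative (suggested by the Remark after Proposition \ref{prop:linrecG}) one could instead verify that the proposed $\phi_i$ satisfy the three-term recursion \eqref{eq:d1linrecWAB} with the tabulated initial values, converting the problem into a contiguous relation for the inner ${}_2F_1$ and trading the tail analysis for a routine but lengthy hypergeometric identity.
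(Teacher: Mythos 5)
Your structural setup is sound and the identifications are correct: reducing \eqref{eq:G12m2AB} to a power-series identity in $z=1728/j$, recognizing $\phi_i(A_{n,r})$ as the Cauchy-product coefficients of ${}_{2}F_{1}(\tfrac1{12},\tfrac5{12};1;z)\cdot{}_{2}F_{1}(-\kappa_1,-\kappa_2;-\kappa_3;z)$ (and similarly for $B$ with ${}_{2}F_{1}(-\tfrac1{12},\tfrac7{12};1;z)$), and noting that Euler's transformation \eqref{eq:Euler} formally relates the two inner factors $\mathcal{H}_A$ and $(1-z)^{1/2}\mathcal{H}_B$ --- all of this matches the Remark the paper places after the theorem. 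But the key verification step fails as written. The products $F\,\mathcal{H}_A$ and $\tilde F\,\mathcal{H}_B$ do \emph{not} terminate: they are infinite series, and $\tilde A$, $\tilde B$ are by definition their truncations at degrees $d_{n,r}$ and $d_{n,r+2}$ (the paper's Remark says precisely that $A_{n,r}$, $B_{n,r}$ are the ``polynomial parts'' of these products). Consequently the manipulation $\tilde F\tilde A-(1-z)^{1/2}F\tilde B=F\tilde F\bigl(\mathcal{H}_A-(1-z)^{1/2}\mathcal{H}_B\bigr)$ is false for the truncated polynomials; the discarded tails $T_A=F\mathcal{H}_A-\tilde A=O(z^{d+1})$ and $T_B=\tilde F\mathcal{H}_B-\tilde B$ contribute $-\tilde F T_A+(1-z)^{1/2}F T_B=O(z^{n})$, which is far short of the required $O(z^{2n})$ and is not controlled by your argument. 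Your conclusion is also internally inconsistent: if the bracket genuinely collapsed via Euler, the residual would be zero (or the $O(z^{2n})$ defect of the Euler identity at the integer parameter $-\kappa_3$), not ``precisely the extremal hypergeometric on the left''; in truth the nonzero right-hand side $N_{n,r}^{-1}(z/1728)^{2n}{}_2F_1(\cdots)$ must come exactly from the uncancelled tails, and showing that is the entire content of the theorem. In addition, the uniqueness of the Hermite--Pad\'e approximant (needed to conclude that a solution of the order condition \emph{is} $(A_{n,r},B_{n,r})$) is asserted ``up to normalization'' but not justified.

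The paper avoids all of this by taking the route you mention only in your closing sentence: it rewrites the claimed closed formulas as expansions $A_{n,r}(X)=\sum_k(-12)^{3k}\binom{\kappa_1}{k}\binom{\kappa_2}{k}\binom{\kappa_3}{k}^{-1}\alpha^0_{d_{n,r}-k}(X)$ (and $B_{n,r}$ in terms of $\beta_{d-k}$), where $\alpha^0_n,\beta_n$ are the polynomial parts of $X^n\,{}_2F_1$, and then verifies directly that these expressions satisfy the three-term recursion \eqref{eq:d1linrecWAB} with the tabulated initial values, following the proof of Proposition 4 in Kaneko--Zagier. That check is a finite contiguous-relation computation and sidesteps tail analysis entirely. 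To repair your proof you would either need to carry out that recursion check, or genuinely compute the tail contribution $-\tilde F T_A+(1-z)^{1/2}FT_B$ and identify it with $N_{n,r}^{-1}(z/1728)^{2n}{}_2F_1(n-\tfrac1{12},n+\tfrac7{12};2n+1;z)$, neither of which is done in the proposal.
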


\begin{proof}
We define three monic polynomials $\alpha_{n}^{\varepsilon}(X)\, (\varepsilon \in \{0,1\}), \beta_{n}(X)$ of degree $n\ge0$ by
\begin{align*}
X^{n} {}_{2}F_{1} \left( \tfrac{1}{12}, \tfrac{5}{12} ; 1 ; \tfrac{1728}{X}  \right) &= \alpha_{n}^{0}(X) + O(1/X), \\
X^{n-1} (X-1728)\, {}_{2}F_{1} \left( \tfrac{7}{12}, \tfrac{11}{12} ; 1 ; \tfrac{1728}{X}  \right) &= \alpha_{n}^{1}(X) + O(1/X), \\
X^{n} {}_{2}F_{1} \left( -\tfrac{1}{12}, \tfrac{7}{12} ; 1 ; \tfrac{1728}{X}  \right) &= \beta_{n}(X) + O(1/X).
\end{align*}
(The polynomial $\alpha_{n}^{1}(X)$ will be used later.) Then the assertion of Theorem \ref{thm:explicitformulaAB} is equivalent to the following expressions.
\begin{align}
A_{n,r}(X) &= \sum_{k=0}^{d_{n,r}} (-12)^{3k} \binom{\kappa_{1}(A_{n,r})}{k} \binom{\kappa_{2}(A_{n,r})}{k} \binom{\kappa_{3}(A_{n,r})}{k}^{-1} \alpha_{d_{n,r} -k}^{0}(X), \label{eq:Atkinlikepolybyalpha} \\
B_{n,r}(X) &= \sum_{k=0}^{d_{n,r+2}} (-12)^{3k} \binom{\kappa_{1}(B_{n,r})}{k} \binom{\kappa_{2}(B_{n,r})}{k} \binom{\kappa_{3}(B_{n,r})}{k}^{-1} \beta_{d_{n,r+2} -k}(X) . \label{eq:Bpolybybeta}
\end{align}
We can check directly that these equations satisfy the recursion \eqref{eq:d1linrecWAB}, and the calculation is similar to the proof of Proposition 4 in \cite{kaneko1998supersingular}, so we omit it. 
\end{proof}

\begin{rem}
As already mentioned in \cite[p.~116]{kaneko1998supersingular}, the Atkin polynomial $A_{n,2}(X)$ is obtained as a truncation of the product of two hypergeometric series ${}_{2}F_{1}$, and this fact also holds for $A_{n,r}(X)$ and $B_{n,r}(X)$. Their hypergeometric representation is simply a rewriting of equations \eqref{eq:Atkinlikepolybyalpha} and \eqref{eq:Bpolybybeta}. Specifically, $A_{n,r}(X)$ and $B_{n,r}(X)$ are the polynomial parts of the following two equations, respectively:
\begin{align*}
&{} X^{d_{n,r}} {}_{2}F_{1} \left( \frac{1}{12}, \frac{5}{12} ; 1 ; \frac{1728}{X} \right) {}_{2}F_{1} \left( -\kappa_{1}(A_{n,r}), -\kappa_{2}(A_{n,r}) ; -\kappa_{3}(A_{n,r}) ; \frac{1728}{X} \right), \\
&{} X^{d_{n,r+2}} {}_{2}F_{1} \left( -\frac{1}{12}, \frac{7}{12} ; 1 ; \frac{1728}{X} \right) {}_{2}F_{1} \left( -\kappa_{1}(B_{n,r}), -\kappa_{2}(B_{n,r}) ; -\kappa_{3}(B_{n,r}) ; \frac{1728}{X} \right).
\end{align*}
Thus, using the results of \cite[p.~244]{bailey1928products} on products of generalized hypergeometric series, we can express the coefficients $\phi_{i}(A_{n,r})$ and $\phi_{i}(B_{n,r})$ as a special value of a generalized hypergeometric series ${}_{4}F_{3}$. 
For example, we have 
\begin{align*}
\phi_{i}(A_{n,2}) &= \frac{(\tfrac{1}{12})_{i} (\tfrac{5}{12})_{i}}{(i!)^{2}} \, {}_{4}F_{3} \left(-i, -i, -n-\tfrac{1}{12}, -n+\tfrac{7}{12} ; -i+\tfrac{7}{12}, -i+\tfrac{11}{12}, -2n+1 ; 1 \right) , \\
\phi_{i}(B_{n,2}) &= \frac{(-\tfrac{1}{12})_{i} (\tfrac{7}{12})_{i}}{(i!)^{2}} \, {}_{4}F_{3} \left(-i, -i, -n+\tfrac{5}{12}, -n+\tfrac{13}{12} ; -i+\tfrac{5}{12}, -i+\tfrac{13}{12}, -2n+1 ; 1 \right) .
\end{align*}
\end{rem}

To prove a congruence formula for the Atkin-like polynomials $A_{n,r}(X)$, we also need the following formulas.  They can be proved in the same way as Theorem~\ref{thm:explicitformulaAB}.

\begin{prop} \label{prop:Atkinalpha}
The Atkin-like polynomials $A_{n,r}(X)$, multiplied by the appropriate factors, have the  following expansions with respect to the polynomials $\alpha_{n}^{\varepsilon}(X)$. Unless otherwise stated, these expressions are valid for $n \ge 1$.
\begin{align*}
X A_{n,0}(X) &= \sum_{k=0}^{n} (-12)^{3k} \binom{n-\tfrac{11}{12}}{k} \binom{n-\tfrac{7}{12}}{k} \binom{2n-1}{k}^{-1} \alpha_{n-k}^{0}(X)  , \\
(X-1728) A_{n,0}(X) &= \sum_{k=0}^{n} (-12)^{3k} \binom{n-\tfrac{1}{12}}{k} \binom{n-\tfrac{5}{12}}{k} \binom{2n-1}{k}^{-1} \alpha_{n-k}^{1}(X)  , \\
X(X-1728) A_{n,0}(X) &= \sum_{k=0}^{n+1} (-12)^{3k} \binom{n-\tfrac{1}{12}}{k} \binom{n-\tfrac{5}{12}}{k} \binom{2n-1}{k}^{-1} \alpha_{n-k+1}^{1}(X) \quad (n\ge2),  \\
A_{n,2}(X) &= \sum_{k=0}^{n} (-12)^{3k} \binom{n-\tfrac{13}{12}}{k} \binom{n-\tfrac{5}{12}}{k} \binom{2n-1}{k}^{-1} \alpha_{n-k}^{1}(X) , \\
X A_{n,6}(X) &= \sum_{k=0}^{n+1} (-12)^{3k} \binom{n-\tfrac{1}{12}}{k} \binom{n-\tfrac{5}{12}}{k} \binom{2n}{k}^{-1} \alpha_{n-k+1}^{1}(X) , \\
(X-1728) A_{n,8}(X) &= \sum_{k=0}^{n+1} (-12)^{3k} \binom{n-\tfrac{1}{12}}{k} \binom{n+\tfrac{7}{12}}{k} \binom{2n}{k}^{-1} \alpha_{n-k+1}^{1}(X) .
\end{align*}
\end{prop}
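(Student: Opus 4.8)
The plan is to establish all six identities in the same spirit as Theorem~\ref{thm:explicitformulaAB}: show that each right-hand side satisfies the same three-term recursion as its left-hand side, and then match a small number of initial values. Every left-hand side has the shape $P_{n}(X)\coloneqq f(X)\,A_{n,r}(X)$ with $f(X)\in\{X,\,X-1728,\,X(X-1728),\,1\}$ independent of $n$, so multiplying the recursion \eqref{eq:d1linrecWAB} for $A_{n,r}(X)$ through by $f(X)$ shows at once that
\[
P_{n+1}(X)=(X-a_{n,r})\,P_{n}(X)-b_{n,r}\,P_{n-1}(X)
\]
holds in the same range of $n$. It therefore remains to verify that the displayed $\alpha$-expansions obey this recursion and to check one or two small cases against the table of initial values of $A_{n,r}(X)$.

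To treat the right-hand sides I would first record the elementary shift relation $\alpha^{\varepsilon}_{m+1}(X)=X\,\alpha^{\varepsilon}_{m}(X)+g^{\varepsilon}_{m+1}$, where the constant $g^{\varepsilon}_{m+1}$ is the next Laurent coefficient of the generating series defining $\alpha^{\varepsilon}_{m}(X)$ (for $\varepsilon=0$ it equals $1728^{m+1}(\tfrac{1}{12})_{m+1}(\tfrac{5}{12})_{m+1}/((m+1)!)^{2}$). Inserting this into $X\cdot(\text{RHS})$ turns each recursion check into a family of contiguous relations among the hypergeometric coefficients $(-12)^{3k}\binom{\kappa_{1}}{k}\binom{\kappa_{2}}{k}\binom{\kappa_{3}}{k}^{-1}$, together with one identity for the accumulated constant terms; these are Pfaff--Saalsch\"utz-type relations of exactly the kind used in the proof of Proposition~4 of \cite{kaneko1998supersingular}, with the parameters $\kappa_{i}$ read from the table. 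The initial values then follow by direct evaluation, e.g. for $XA_{n,0}(X)$ at $n=1$ the two terms $\alpha^{0}_{1}(X)=X+60$ and $-1728\cdot\tfrac{1}{12}\cdot\tfrac{5}{12}=-60$ combine to $X=X\,A_{1,0}(X)$.

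A more transparent way to see why the four $\alpha^{1}$-identities carry precisely the stated parameters is the Euler transformation \eqref{eq:Euler}. Writing $\alpha^{1}_{m}(X)$ as the polynomial part of $X^{m}(1-1728/X)^{1/2}\,{}_{2}F_{1}(\tfrac{1}{12},\tfrac{5}{12};1;\tfrac{1728}{X})$, each $\alpha^{1}$-sum becomes the polynomial part of $X^{d}(1-1728/X)^{1/2}\,{}_{2}F_{1}(\tfrac{1}{12},\tfrac{5}{12};1;\tfrac{1728}{X})\,{}_{2}F_{1}(-\kappa_{1},-\kappa_{2};-\kappa_{3};\tfrac{1728}{X})$. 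Applying \eqref{eq:Euler} to the last factor absorbs (or, when $f(X)\neq1$, combines with) the factor $(1-1728/X)^{1/2}$ and shifts $(\kappa_{1},\kappa_{2})$ to exactly the parameters of the $\alpha^{0}$-expansion of the relevant $A_{n,r}(X)$ in Theorem~\ref{thm:explicitformulaAB}; for $A_{n,2}(X)$, where no extra factor is present, the two half-powers cancel outright and the identity follows immediately.

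The main obstacle lies in the identities carrying a factor $f(X)\neq1$: there the Euler reduction expresses the right-hand side as the polynomial part of $f(X)\cdot X^{d_{n,r}}{}_{2}F_{1}(\tfrac{1}{12},\tfrac{5}{12};1;\tfrac{1728}{X})\,{}_{2}F_{1}(-\kappa_{1},-\kappa_{2};-\kappa_{3};\tfrac{1728}{X})$, and isolating $f(X)\,A_{n,r}(X)$ leaves a discrepancy equal to the first one or two Laurent coefficients of that product below its constant term. Proving this directly amounts to a terminating hypergeometric summation---the assertion that $X^{d_{n,r}}$ times the product of those two ${}_{2}F_{1}$'s is $A_{n,r}(X)$ with a gap before its next nonzero coefficient---which can be extracted from Bailey's product formula \cite{bailey1928products} or a Pfaff--Saalsch\"utz evaluation. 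Since the recursion argument of the first two paragraphs absorbs these vanishings automatically, I would take that as the formal proof and keep the Euler computation only as the conceptual explanation for the shape of the parameters $\kappa_{i}$.
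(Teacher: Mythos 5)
Your proposal is correct and follows essentially the same route as the paper: the paper disposes of Proposition~\ref{prop:Atkinalpha} by saying it "can be proved in the same way as Theorem~\ref{thm:explicitformulaAB}", whose proof is precisely your first two paragraphs --- verify that the $\alpha$-expansions satisfy the three-term recursion \eqref{eq:d1linrecWAB} (the extra factor $f(X)$ passing through it trivially) via the Pfaff--Saalsch\"utz-type contiguous relations of Proposition~4 in \cite{kaneko1998supersingular}, then match initial values. Your Euler-transformation discussion is a nice supplementary explanation of where the parameters $\kappa_{i}$ come from, but is not needed for, and does not change, the formal argument.
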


\begin{thm} \label{thm:congruenceAtkin-like}
Let $p \ge 5$ be a prime number and put $p-1=12m+4\delta +6 \varepsilon$ with $m \in \mathbb{Z}_{\ge0} , \delta, \varepsilon \in \{0,1\}$. Then we have
\begin{align*}
A_{m +\delta +\varepsilon,2}(X) &\equiv X^{\delta} A_{m+\varepsilon,6}(X) \equiv (X -1728)^{\varepsilon} A_{m +\delta,8}(X) \\
&\equiv X^{\delta} (X -1728)^{\varepsilon} A_{m+1,0}(X) \pmod{p}. 
\end{align*}
\end{thm}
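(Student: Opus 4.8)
The plan is to prove this congruence by reducing all four decorated polynomials modulo $p$ through the closed-form expansions of Theorem~\ref{thm:explicitformulaAB} and Proposition~\ref{prop:Atkinalpha} and showing that each collapses to a single building-block polynomial; morally the common value is the supersingular polynomial $ss_{p}(X)$, so this extends the Kaneko--Zagier congruence $A_{m+\delta+\varepsilon,2}(X)\equiv ss_{p}(X)\pmod{p}$ to the other three families. First I would record the $p$-integrality that makes the statement meaningful: since $p\ge5$, the integer $12$ is a unit modulo $p$, and for the ranges of $n,k$ occurring here one has $\kappa_{3}<p$, so $\binom{\kappa_{3}}{k}$ is a unit and the coefficients $(-12)^{3k}\binom{\kappa_{1}}{k}\binom{\kappa_{2}}{k}\binom{\kappa_{3}}{k}^{-1}$ in \eqref{eq:Atkinlikepolybyalpha}, \eqref{eq:Bpolybybeta} and in Proposition~\ref{prop:Atkinalpha} lie in $\mathbb{Z}_{(p)}$; likewise $\alpha_{n}^{0}(X),\alpha_{n}^{1}(X)\in\mathbb{Z}_{(p)}[X]$. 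The single arithmetic fact driving everything is the identity $12\bigl(m+\tfrac{1+4\delta+6\varepsilon}{12}\bigr)=12m+1+4\delta+6\varepsilon=p$, i.e.\ the rational number $m+\tfrac{1+4\delta+6\varepsilon}{12}=p/12$ is $\equiv0\pmod{p}$.

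Next I would select the expansion adapted to $\varepsilon$: the $\alpha^{0}$-expansions of Theorem~\ref{thm:explicitformulaAB} (together with the first formula of Proposition~\ref{prop:Atkinalpha} for $XA_{m+1,0}$) when $\varepsilon=0$, and the $\alpha^{1}$-expansions of Proposition~\ref{prop:Atkinalpha} when $\varepsilon=1$. This dichotomy is the reflection of Euler's transformation \eqref{eq:Euler}, under which the extra factor $(X-1728)^{\varepsilon}$ is what interchanges the two families of building blocks. Inspection of the parameter table then shows that, in each of the four decorated polynomials, exactly one of the upper binomial parameters $\kappa_{1},\kappa_{2}$ equals $p/12$ (namely $m+\tfrac{1}{12},m+\tfrac{5}{12},m+\tfrac{7}{12},m+\tfrac{11}{12}$ according to $(\delta,\varepsilon)$). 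Hence the factor $\binom{p/12}{k}$ divides every summand, and since $\binom{p/12}{k}\equiv0\pmod{p}$ for $1\le k<p$, each sum truncates modulo $p$ to its $k=0$ term, namely the monic polynomial $\alpha_{m+\delta+\varepsilon}^{\varepsilon}(X)$ (where the superscript $\varepsilon\in\{0,1\}$ conveniently names the right family).

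For the three families $A_{m+\delta+\varepsilon,2}$, $(X-1728)^{\varepsilon}A_{m+\delta,8}$ and $X^{\delta}(X-1728)^{\varepsilon}A_{m+1,0}$ the surviving leading term is literally $\alpha_{m+\delta+\varepsilon}^{\varepsilon}(X)$ in all four parities of $(\delta,\varepsilon)$, so these three are immediately congruent. The delicate family is $X^{\delta}A_{m+\varepsilon,6}$, because Proposition~\ref{prop:Atkinalpha} gives a clean $\alpha^{1}$-expansion only for $XA_{n,6}$ while Theorem~\ref{thm:explicitformulaAB} gives only an $\alpha^{0}$-expansion for $A_{n,6}$; thus when $\delta\neq\varepsilon$ the truncation yields $\alpha_{m+\delta+\varepsilon+1}^{\varepsilon}(X)$ or $X\alpha_{m+\delta+\varepsilon-1}^{\varepsilon}(X)$ rather than the desired $\alpha_{m+\delta+\varepsilon}^{\varepsilon}(X)$. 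To bridge this I would use the elementary relations $\alpha_{n+1}^{0}(X)=X\alpha_{n}^{0}(X)+a_{n+1}1728^{n+1}$ and $\alpha_{n+1}^{1}(X)=X\alpha_{n}^{1}(X)+(b_{n+1}-b_{n})1728^{n+1}$, where $a_{k}=\tfrac{(1/12)_{k}(5/12)_{k}}{(k!)^{2}}$ and $b_{k}=\tfrac{(7/12)_{k}(11/12)_{k}}{(k!)^{2}}$, together with the degenerate recurrence $\alpha_{n+1}^{\varepsilon}(X)\equiv X\alpha_{n}^{\varepsilon}(X)\pmod{p}$ at the relevant index. Its correcting coefficient vanishes modulo $p$ for the same reason as before: $(5/12)_{k}$ (when $\delta=1,\varepsilon=0$) and $(7/12)_{k}$ (when $\delta=0,\varepsilon=1$) acquire the factor $p/12\equiv0$. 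Because $X$ and $X-1728$ are non-zero-divisors in $\mathbb{F}_{p}[X]$, this identifies $X^{\delta}A_{m+\varepsilon,6}$ with the common value and completes the chain.

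The main obstacle I anticipate is precisely this $r=6$ mismatch: tracking, across the four parities $(\delta,\varepsilon)$, which building block ($\alpha^{0}$ or $\alpha^{1}$) and which unit shift in the index are in force, and checking that the single correcting coefficient always vanishes. Everything else is a uniform consequence of the one relation $p/12\equiv0\pmod{p}$, and the only place demanding genuine care is the bookkeeping for $A_{n,6}$ and the appeal to the degenerate recurrence.
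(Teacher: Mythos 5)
Your proposal is correct and follows essentially the same route as the paper: expand each decorated polynomial in the $\alpha_{*}^{\varepsilon}$ basis via \eqref{eq:Atkinlikepolybyalpha} and Proposition~\ref{prop:Atkinalpha}, and observe that every coefficient with $k\ge 1$ vanishes modulo $p$ because one upper binomial parameter equals $p/12$. The only difference is that you work out explicitly the index-shift step $\alpha_{n+1}^{\varepsilon}(X)\equiv X\alpha_{n}^{\varepsilon}(X)\pmod{p}$ needed for the $r=6$ family when $\delta\neq\varepsilon$ (and rederive the $r=2$ case), whereas the paper compresses these points into its citation of Propositions 4 and 5 of Kaneko--Zagier; your bookkeeping there is accurate.
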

\begin{proof}
Since the polynomial $A_{n,2}(X)$ is nothing else than the Atkin polynomial $A_{n}(X)$, we see that $A_{m+\delta+\varepsilon, 2}(X) \equiv \alpha_{m+\delta+\varepsilon}^{\varepsilon}(X) \pmod{p}$ from Propositions~4 and~5 of \cite{kaneko1998supersingular}.
By equation \eqref{eq:Atkinlikepolybyalpha} and Proposition \ref{prop:Atkinalpha}, which expresses the polynomial $A_{n,r}(X)$ (with some factors) in terms of the polynomial $\alpha_{*}^{\varepsilon}(X)$, we see that the remaining polynomials are also congruent to $\alpha_{m+\delta+\varepsilon}^{\varepsilon}(X)$ modulo~$p$, since all binomial coefficients except the one for $k=0$ vanish modulo~$p$.
\end{proof}

\begin{rem}
\begin{enumerate}[(i)]
\item Our Theorem \ref{thm:congruenceAtkin-like} corresponds to a specialization of Theorem 1 in \cite{basha2004systems}, but they and we use different proof methods. 
\item As mentioned in the introduction, the mod $p$ reduction of the Atkin  polynomial is equal to the supersingular polynomial $ss_{p}(X)$ whose roots are exactly the $j$-invariants of all the supersingular elliptic curves in characteristic $p$. 
Surprisingly, it is known that the supersingular polynomial $ss_{p}(X)$, and thus the Atkin polynomial $A_{m +\delta +\varepsilon,2}(X) \pmod{p}$, factorizes only into linear factors if and only if $p \mid \# \mathbb{M}$, where $m ,\delta ,\varepsilon$ have the same meaning as in Theorem~\ref{thm:congruenceAtkin-like} (for $p=2,3$ we have $ss_{p}(X) \equiv A_{1,2}(X) \equiv X \pmod{p}$) and $\mathbb{M}$ is the Monster group, which is the largest sporadic finite simple group of order 
\begin{align*}
\# \mathbb{M} &=808017424794512875886459904961710757005754368000000000 \\
&=2^{46}\cdot 3^{20}\cdot 5^9\cdot 7^6\cdot 11^2\cdot 13^3\cdot 17\cdot 19\cdot 23\cdot 29\cdot 31\cdot 41\cdot 47\cdot 59\cdot 71.
\end{align*}
This coincidence is known as the ``Jack Daniels problem". See \cite{ogg1975automorphismes} and \cite{duncan2016jack} for details.

Interestingly, this coincidence also occurs in the supersingular polynomials for the Fricke group of levels 2,3,5 and 7, and the sporadic finite simple groups 
Baby Monster group $\mathbb{B}$, largest Fisher group $Fi'_{24}$, Harada--Norton group $HN$, and Held group $He$, respectively. 
See \cite[Thm.~5]{nakaya2019supersingular}, \cite{sakai2011atkin,sakai2014atkin}, and \cite[Thm.~6]{morton2023supersingular} for more details. Note that of these, the relationship between the supersingular polynomials and the Atkin polynomials has only been proved for levels 2 and~3.
\end{enumerate}
\end{rem}


\section{Orthogonality of the Atkin-like polynomials} \label{sec:orthogonalityA}

Let $\mathcal{L}$ be the linear functional corresponding to the Atkin inner product \eqref{def:Atkininnerprod}. Then the moments $\mathcal{L}(j^{n})\;(n \ge 0)$ can be expressed as
\begin{align}
\mathcal{L}(j^{n}) = (j^{n},1) = \mathop{\mathrm{Res}}_{q=0} j(\tau )^{n} E_{2}(\tau ) \frac{d q}{q}. \label{eq:momentL}
\end{align}
Note that since the Fourier coefficients of $j(\tau )$ and $E_{2}(\tau )$ are integers, the value of $\mathcal{L}(j^{n})$ is also an integer.

Let $p=e^{2\pi i \sigma}, \; \sigma \in \mathfrak{H}$ and assume that $\Im(\sigma)$ is sufficiently large. Following the standard abuse of notation, we now assume that $f(\sigma)$ and $f(p)$ represent the same function $f$. 
By formally expanding $1/(j(p)-j)$ to the power series $\sum_{n=0}^{\infty} j^{n}j(p)^{-n-1}$, 
the moment-generating function\footnote{In the context of orthogonal polynomial theory, it is called the Stieltjes function (see, e.g., \cite{zhedanov1997rational}).} associated with $\mathcal{L}$ can be rewritten as follows:
\begin{align}
\mathcal{L} \left( \frac{1}{j(p) - j} \right) = \sum_{n=0}^{\infty} \mathcal{L}(j^{n}) j(p)^{-n-1} = \frac{E_{2}(p)E_{4}(p)}{j(p)E_{6}(p)} = \frac{{}_{2}F_{1} \left( \frac{5}{12}, \frac{13}{12} ; 1 ; \frac{1728}{j(p)} \right) }{j(p)\, {}_{2}F_{1} \left( \frac{1}{12}, \frac{5}{12} ; 1 ; \frac{1728}{j(p)} \right) }.  \label{eq:Stieltjes function}
\end{align}
Here and below we assume that the linear functional $\mathcal{L}$ acts only on the variable $j$. The second equality in \eqref{eq:Stieltjes function} is found in \cite[\S 5]{kaneko1998supersingular}, and the last equality is obtained from Proposition \ref{prop:EisensteinHyp} and \eqref{eq:Euler}.

The Atkin polynomial $A_{n,2}(X)$ is the orthogonal polynomial corresponding to $\mathcal{L}$, and the orthogonality relation $\mathcal{L} \left(A_{m,2}(j)A_{n,2}(j) \right) = \mathcal{L} \left(A_{m,2}(j)^{2} \right) \delta_{m,n}$ holds for $m,n\ge0$, where the symbol $\delta_{m,n}$ denotes the Kronecker delta, and especially $\mathcal{L} \left(A_{n,2}(j) \right) =0$ holds for $n\ge1$. 
We first prove analogues of the latter formula for the Atkin-like polynomials $A_{n,r}(j)$ for $r \in \{0,6,8\}$. 
The explicit orthogonality relations that correspond to the former formula will be given later in Theorem~\ref{prop:imageofAtkinL}.

\begin{prop} \label{prop:orthogonality}
The following equation holds for any integer $n\ge1$.
\begin{align*}
\mathcal{L} \left( j A_{n,6}(j) \right) = \mathcal{L} \left( (j-1728) A_{n,8}(j) \right) =  \mathcal{L} \left( j(j-1728) A_{n+1,0}(j) \right) =0.
\end{align*}
\end{prop}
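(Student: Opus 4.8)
The plan is to prove all three vanishing statements simultaneously by exploiting the structure of the Stieltjes function \eqref{eq:Stieltjes function} together with the expansions of $A_{n,r}(X)$ (with appropriate prefactors) in terms of the monic polynomials $\alpha_{n}^{\varepsilon}(X)$ from Proposition \ref{prop:Atkinalpha}. The key observation is that each of the three expressions $jA_{n,6}(j)$, $(j-1728)A_{n,8}(j)$, and $j(j-1728)A_{n+1,0}(j)$ is precisely one of the left-hand sides appearing in Proposition \ref{prop:Atkinalpha}, and hence expands as a $\mathbb{C}$-linear combination of the polynomials $\alpha_{\ell}^{1}(j)$ for a range of $\ell$. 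Therefore it suffices to compute $\mathcal{L}(\alpha_{\ell}^{1}(j))$ and show the linear combinations vanish.

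First I would reduce the problem to evaluating $\mathcal{L}$ on the polynomials $\alpha_{\ell}^{1}(X)$, which by their defining relation are the polynomial parts of $X^{\ell-1}(X-1728)\,{}_{2}F_{1}(\tfrac{7}{12},\tfrac{11}{12};1;\tfrac{1728}{X})$. The natural tool is the generating-function identity: since $\mathcal{L}(1/(j(p)-j))=\sum_{n\ge0}\mathcal{L}(j^{n})j(p)^{-n-1}$ equals the closed form in \eqref{eq:Stieltjes function}, applying $\mathcal{L}$ to a generating series of the $\alpha_{\ell}^{1}$ should produce a clean hypergeometric expression. Concretely, I would form a generating function $\sum_{\ell}\alpha_{\ell}^{1}(j)\,t^{\ell}$ or, more efficiently, exploit that applying $\mathcal{L}$ to $X^{\ell-1}(X-1728)\,{}_{2}F_{1}(\tfrac{7}{12},\tfrac{11}{12};1;\tfrac{1728}{X})$ and summing against $j(p)^{-\ell}$ reduces — via the Euler transformation \eqref{eq:Euler} and Proposition \ref{prop:EisensteinHyp} — to a product of Gaussian hypergeometric series in $1728/j(p)$ whose $q$-expansion has no constant term in the relevant degrees. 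The residue description \eqref{eq:momentL} of $\mathcal{L}$ then shows $\mathcal{L}(\alpha_{\ell}^{1}(j))=0$ for all $\ell$ in the required range, because the generating function, when matched against the Stieltjes function, contains the modular factor $E_{4}(p)/E_{6}(p)$ whose hypergeometric form forces the coefficients to line up so that each individual $\mathcal{L}(\alpha_{\ell}^{1})$ vanishes.

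With the values $\mathcal{L}(\alpha_{\ell}^{1}(j))$ in hand, the three claimed identities follow by substituting the expansions from Proposition \ref{prop:Atkinalpha} and using linearity of $\mathcal{L}$: each target is a finite sum $\sum_{k}c_{k}\,\alpha_{\ell-k}^{1}(j)$, so $\mathcal{L}$ of it is $\sum_{k}c_{k}\,\mathcal{L}(\alpha_{\ell-k}^{1}(j))$, which is zero term by term. The cases $r=6$, $r=8$, and $r=0$ are handled identically, differing only in the explicit binomial coefficients $c_{k}$ and the index range, all read off from Proposition \ref{prop:Atkinalpha}.

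The main obstacle I anticipate is the clean evaluation of $\mathcal{L}(\alpha_{\ell}^{1}(j))$: one must confirm that the polynomial part of $X^{\ell-1}(X-1728)\,{}_{2}F_{1}(\tfrac{7}{12},\tfrac{11}{12};1;\tfrac{1728}{X})$ interacts with the Stieltjes function so as to produce a vanishing constant term in $q$ for precisely the indices that occur. This hinges on correctly tracking the Euler-transform relating ${}_{2}F_{1}(\tfrac{7}{12},\tfrac{11}{12};1;z)$ to ${}_{2}F_{1}(-\tfrac{1}{12},\tfrac{7}{12};1;z)$ and on a careful bookkeeping of which powers of $q^{-1}$ survive. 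Once the single functional value $\mathcal{L}(\alpha_{\ell}^{1}(j))$ is pinned down, the rest is routine linear algebra.
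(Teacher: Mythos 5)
Your reduction to the polynomials $\alpha_{\ell}^{1}$ is a reasonable starting point, but the key step you rely on --- that $\mathcal{L}(\alpha_{\ell}^{1}(j))=0$ for all $\ell$ occurring in the expansions of Proposition \ref{prop:Atkinalpha} --- is false. The expansions there run all the way down to $\alpha_{0}^{1}(X)=1$, and $\mathcal{L}(1)=1$ (the constant term of $E_{2}$). Even excluding $\ell=0$ does not help: from $A_{1,2}(X)=\alpha_{1}^{1}(X)+84$ one gets $\alpha_{1}^{1}(j)=j-804$, and since $\mathcal{L}(j)=720$ (the constant term of $jE_{2}$), we have $\mathcal{L}(\alpha_{1}^{1}(j))=-84\neq0$. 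So the vanishing of $\mathcal{L}$ on $jA_{n,6}(j)$, $(j-1728)A_{n,8}(j)$, and $j(j-1728)A_{n+1,0}(j)$ is \emph{not} a term-by-term phenomenon in the $\alpha^{1}$-basis; it comes from cancellation among the terms, and no amount of bookkeeping with the Euler transformation will make the individual values $\mathcal{L}(\alpha_{\ell}^{1}(j))$ vanish.

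The missing idea is to regroup rather than evaluate termwise: since $A_{m,2}(X)$ itself has an $\alpha^{1}$-expansion (equation \eqref{eq:Atkinlikepolybyalpha} with $r=2$), one can compare coefficients of $\alpha_{\ell}^{1}(j)$ in Proposition \ref{prop:Atkinalpha} to rewrite each of the three target expressions as a finite linear combination of the \emph{Atkin polynomials} $A_{m,2}(j)$ with $m\ge1$ only --- e.g.\ $jA_{n,6}(j)=A_{n+1,2}(j)+\tfrac{6(12n-1)(12n+5)}{n(2n+1)}A_{n,2}(j)$ for $n\ge1$ --- and then invoke the known orthogonality $\mathcal{L}(A_{m,2}(j))=0$ for $m\ge1$. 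This is exactly what the paper does. The nontrivial content is that the $A_{0,2}=1$ component is absent from these combinations (equivalently, that the three expressions are Christoffel transforms of the Atkin system), and that is precisely the fact your proposal does not establish.
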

\begin{proof}
By comparing the coefficients of $\alpha_{\ell}^{1}(j)$ in Proposition \ref{prop:Atkinalpha}, we obtain the following expansion formulas for $n\ge1$:
\begin{align}
&{} j A_{n,6}(j) = A_{n+1,2}(j) + \frac{6(12n-1)(12n+5)}{n(2n+1)} A_{n,2}(j) , \label{eq:recAn6An2} \\ 
&{} (j-1728) A_{n,8}(j) = A_{n+1,2}(j) - \frac{6(12n-1)(12n+7)}{n(2n+1)} A_{n,2}(j) , \label{eq:recAn8An2} \\
&{} j (j-1728) A_{n+1,0}(j) = j A_{n+1,6}(j) - \frac{6(12n+7)(12n+11)}{(n+1)(2n+1)} j A_{n,6}(j) \label{eq:recAn0An6} \\
 &= (j-1728) A_{n+1,8}(j) + \frac{6(12n+5)(12n+11)}{(n+1)(2n+1)} (j-1728) A_{n,8}(j)  \label{eq:recAn0An8} \\
\begin{split}
&= A_{n+2,2}(j) - \frac{24(12n+11)}{(2n+1)(2n+3)} A_{n+1,2}(j) \\
&{} \quad - \frac{36(12n-1)(12n+5)(12n+7)(12n+11)}{n(n+1)(2n+1)^{2}} A_{n,2}(j) .
\end{split} \label{eq:recAn0An2}
\end{align}
(These equations can also be proved by the fact that each side satisfies the same recursion.) By letting $\mathcal{L}$ act on both sides of \eqref{eq:recAn6An2}, \eqref{eq:recAn8An2}, and \eqref{eq:recAn0An2}, the orthogonality $\mathcal{L} \left(A_{n,2}(j) \right) =0 \, (n\ge1)$ gives us the desired results.
\end{proof}

The above equations \eqref{eq:recAn6An2} to \eqref{eq:recAn0An8} have a clear meaning from the point of view of orthogonal polynomial theory.
For a monic orthogonal polynomial $p_{n}(x)$, its Christoffel transform (CT, for brevity) $q_{n}(x)$ is defined by the formula 
\begin{align}
q_{n}(x) = q_{n}(x;\lambda) = \frac{1}{x-\lambda} \left( p_{n+1}(x) - \frac{p_{n+1}(\lambda)}{p_{n}(\lambda)} p_{n}(x) \right), \label{eq:Christoffel transform}
\end{align}
where $\lambda$ is an arbitrary parameter with $p_{n}(\lambda)\not=0$. 
Let $L$ be the linear functional corresponding to the orthogonal polynomials $p_{n}(x)$. If the new linear functional $L^{*}$ is defined by $L^{*}(f(x))=L((x-\lambda)f(x))$, then the above polynomial $q_{n}(x)$ is the monic orthogonal polynomial corresponding to $L^{*}$ (see \cite[Thm.~7.1]{chihara1978intortho}. This assertion is a specialization of \cite[Thm.~2.5]{szego1975orthogonal}). 

Using the recursion formula \eqref{eq:d1linrecWAB}, we obtain the following special values of the Atkin-like polynomial for $n\ge1$ (Note that the first two equations have already been proved in \cite[Prop.~6]{kaneko1998supersingular}):
\begin{align*}
&{} A_{n,2}(0) = (-12)^{3n+1} \frac{(-\tfrac{1}{12})_{n}(\tfrac{5}{12})_{n}}{(2n-1)!}, \quad A_{n,2}(1728)= - 12^{3n+1} \frac{(-\tfrac{1}{12})_{n}(\tfrac{7}{12})_{n}}{(2n-1)!} , \\
&{} A_{n,6}(1728) = 12^{3n} \frac{(\tfrac{7}{12})_{n}(\tfrac{11}{12})_{n}}{(2n)!} , \quad A_{n,8}(0) = (-12)^{3n} \frac{(\tfrac{5}{12})_{n}(\tfrac{11}{12})_{n}}{(2n)!},
\end{align*}
and so
\begin{align*}
&{} \frac{A_{n+1,2}(0)}{A_{n,2}(0)} = - \frac{6(12n-1)(12n+5)}{n(2n+1)}, \quad \frac{A_{n+1,2}(1728)}{A_{n,2}(1728)} = \frac{6(12n-1)(12n+7)}{n(2n+1)}, \\
&{} \frac{A_{n+1,6}(1728)}{A_{n,6}(1728)} = \frac{6(12n+7)(12n+11)}{(n+1)(2n+1)}, \quad \frac{A_{n+1,8}(0)}{A_{n,8}(0)} = - \frac{6(12n+5)(12n+11)}{(n+1)(2n+1)}.
\end{align*}
Therefore, from equations \eqref{eq:recAn6An2} and \eqref{eq:recAn8An2}, the polynomials $A_{n,6}(X)$ and $A_{n,8}(X)$ are CT of $A_{n,2}(X)$ corresponding to $\lambda =0,1728$ respectively. Similarly,  from equations \eqref{eq:recAn0An6} and \eqref{eq:recAn0An8}, the polynomial $A_{n+1,0}(X)$ is CT of $A_{n,6}(X)$ and $A_{n,8}(X)$ corresponding to $\lambda =1728,0$ respectively. 
Note that there is the inverse transform of CT called the Geronimus transform (\cite[p.~71]{zhedanov1997rational}), e.g., we have
\begin{align*}
A_{n,2}(X) = A_{n,6}(X) + \frac{6(12n+1)(12n-5)}{n(2n-1)} A_{n-1,6}(X) \quad (n\ge1).
\end{align*}

\begin{prop} \label{prop:orthpolyexp}
The following expansions hold for any integer $n\ge0$. 
\begin{align}
A_{n,6}(X) &= \sum_{k=0}^{n} (-12)^{3k} \binom{n+\tfrac{1}{12}}{k} \binom{n-\tfrac{5}{12}}{k} \binom{2k}{k}^{-1} \binom{2n}{2k}^{-1} A_{n-k,2}(X) , \label{eq:expAn6byAn2} \\
A_{n,8}(X) &= \sum_{k=0}^{n} 12^{3k} \binom{n+\tfrac{1}{12}}{k} \binom{n-\tfrac{7}{12}}{k} \binom{2k}{k}^{-1} \binom{2n}{2k}^{-1} A_{n-k,2}(X), \\
A_{n+1,0}(X) &= \sum_{k=0}^{n} 12^{3k} \binom{n+\tfrac{1}{12}}{k} \binom{n+\tfrac{5}{12}}{k} \binom{2k}{k}^{-1} \binom{2n+1}{2k}^{-1} A_{n-k,6}(X) \label{eq:expAn0byAn6} \\
&= \sum_{k=0}^{n} (-12)^{3k} \binom{n+\tfrac{1}{12}}{k} \binom{n+\tfrac{7}{12}}{k} \binom{2k}{k}^{-1} \binom{2n+1}{2k}^{-1} A_{n-k,8}(X) \\
&= \sum_{k=0}^{n} (-12)^{3k} C_{n,k} \binom{n+\tfrac{1}{12}}{k} \binom{2k+1}{k}^{-1} \binom{2n+1}{2k+1}^{-1} A_{n-k,2}(X),
\end{align}
where $C_{n,k}=(-1)^{k} \dbinom{n+\tfrac{5}{12}}{k+1} + \dbinom{n+\tfrac{7}{12}}{k+1}$.
\end{prop}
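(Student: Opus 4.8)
The plan is to read the five identities as connection-coefficient formulas and to obtain them by inverting the bidiagonal \emph{Geronimus} relations that are dual to the Christoffel transforms recorded in \eqref{eq:recAn6An2}--\eqref{eq:recAn0An8}. Concretely, each of $\{A_{n,6}\}$, $\{A_{n,8}\}$ and $\{A_{n+1,0}\}$ is a Christoffel transform of a lower family in the sense of \eqref{eq:Christoffel transform}, so its members are monic of the same degree as the leading member of that family; writing $A_{n,6}=\sum_{k}c_{n,k}A_{n-k,2}$ and so on, the task is purely to identify the $c_{n,k}$.

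First I would treat \eqref{eq:expAn6byAn2}. The displayed Geronimus relation $A_{n,2}=A_{n,6}+g_{n}A_{n-1,6}$ with $g_{n}=\tfrac{6(12n+1)(12n-5)}{n(2n-1)}$ is bidiagonal, so back-substitution yields at once $A_{n,6}=\sum_{k=0}^{n}(-1)^{k}g_{n}g_{n-1}\cdots g_{n-k+1}A_{n-k,2}$. It then remains to check the closed form $(-1)^{k}\prod_{j=0}^{k-1}g_{n-j}=(-12)^{3k}\binom{n+1/12}{k}\binom{n-5/12}{k}\binom{2k}{k}^{-1}\binom{2n}{2k}^{-1}$, which I would do by induction on $k$: both sides equal $1$ at $k=0$, and a short manipulation of binomial ratios shows that passing from $k$ to $k+1$ multiplies each side by $-g_{n-k}$ (the factors $\binom{2k}{k}^{-1}\binom{2n}{2k}^{-1}$ are precisely what reproduce the denominator $(n-k)(2(n-k)-1)$ of $g_{n-k}$). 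The expansion of $A_{n,8}$ is identical after replacing the $6\to2$ relation by its $\lambda=1728$ analogue inverting \eqref{eq:recAn8An2}; likewise \eqref{eq:expAn0byAn6} and the expansion of $A_{n+1,0}$ in the $A_{n-k,8}$ follow from the Geronimus relations inverting \eqref{eq:recAn0An6} and \eqref{eq:recAn0An8}. These three auxiliary Geronimus relations are manufactured exactly as the displayed $6\to2$ case, using the listed special values of the Atkin-like polynomials at $0$ and $1728$.

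For the last expansion, the one carrying the coefficient $C_{n,k}$, I would substitute \eqref{eq:expAn6byAn2} into \eqref{eq:expAn0byAn6}: the coefficient of $A_{n-s,2}$ then becomes a single terminating sum over the intermediate index, which must be shown to collapse to $(-12)^{3s}C_{n,s}\binom{n+1/12}{s}\binom{2s+1}{s}^{-1}\binom{2n+1}{2s+1}^{-1}$. The two-term shape $C_{n,s}=(-1)^{s}\binom{n+5/12}{s+1}+\binom{n+7/12}{s+1}$ is the signature of this computation: $A_{n+1,0}$ is the double Christoffel transform of $\{A_{n,2}\}$ at the nodes $0$ and $1728$, and the partial fraction of $1/\bigl(j(j-1728)\bigr)$ splits the connection coefficient into a contribution carried by the $6$-family (supplying $\binom{n+5/12}{s+1}$) and one by the $8$-family (supplying $\binom{n+7/12}{s+1}$). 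I would verify each piece by equating the corresponding terminating hypergeometric series, in the spirit of Bailey's product formulas cited after Theorem~\ref{thm:explicitformulaAB}.

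The hard part will be exactly this summation: the ratio checks for \eqref{eq:expAn6byAn2} and its siblings are routine, and so is the construction of the three auxiliary Geronimus relations, but evaluating the intermediate-index sum in closed form and recognizing its value as the two-binomial $C_{n,s}$ needs a genuine terminating ${}_{3}F_{2}$ or ${}_{4}F_{3}$ evaluation (equivalently a contiguous relation), and is the one place where matching a single coefficient does not suffice. As a cross-check, and as a fallback should the direct summation prove stubborn, I would confirm the low cases directly, e.g. $C_{n,0}=2n+1$ gives leading coefficient $1$ and the composition reproduces $-\tfrac{24(12n+1)}{(2n+1)(2n-1)}$ at $s=1$, and then show that both sides of the final identity satisfy the same three-term recurrence in $n$ obtained from \eqref{eq:recAn0An2} together with agreement for small $n$.
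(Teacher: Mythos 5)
Your outline is correct, but it reaches the first four expansions by a route dual to the paper's. The paper divides the Christoffel--Darboux identity for $\{A_{n,2}\}$ (and then for $\{A_{n,6}\}$, $\{A_{n,8}\}$) by $A_{n,2}(Y)$ and sets $Y=0$ or $1728$, so the connection coefficients come out directly as $\bigl(\prod_{\ell=k+1}^{n}b_{\ell,2}\bigr)A_{k,2}(\lambda)/A_{n,2}(\lambda)$, to be massaged into the binomial form; you instead back-substitute the bidiagonal Geronimus relation $A_{n,2}=A_{n,6}+g_{n}A_{n-1,6}$ and identify the telescoping product $(-1)^{k}g_{n}\cdots g_{n-k+1}$ with the stated coefficient, which I checked does hold (writing $\binom{n+1/12}{k}\binom{n-5/12}{k}=\prod_{m=n-k+1}^{n}(12m+1)(12m-5)/(12^{2k}(k!)^{2})$ and $\binom{2k}{k}^{-1}\binom{2n}{2k}^{-1}=(k!)^{2}(2n-2k)!/(2n)!$ reproduces exactly $\prod g_{n-m}$). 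The two routes carry the same information --- yours needs the Geronimus relations themselves, which you must first manufacture from the three-term recursion \eqref{eq:d1linrecWAB} and the special values at $0$ and $1728$ (a step you gesture at but should write out, since only the $6\to2$ case is displayed in the paper), while the paper needs only the standard Christoffel--Darboux kernel plus the same special values. For the fifth expansion you and the paper do the identical substitution of \eqref{eq:expAn6byAn2} into \eqref{eq:expAn0byAn6}; the terminating sum you defer as ``the hard part'' is precisely the paper's Lemma~\ref{lem:Cnr}, and it is worth knowing that no genuine ${}_{3}F_{2}$ or ${}_{4}F_{3}$ evaluation is required: the right-hand side of \eqref{eq:Cnr} satisfies an obvious first-order recursion in $r$ for fixed $n$, and $(r+1)!\,C_{n,r}$ satisfies the same one with the same initial value $2n+1$, so an elementary induction on $r$ closes the gap. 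Your fallback (three-term recurrence in $n$ plus low cases) would also work but is heavier than necessary.
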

To prove the last equality of the above proposition, we prepare the following lemma. 
\begin{lem}\label{lem:Cnr}
For arbitrary integers $n,r \ge 0$, the following equation holds.
\begin{align}
(r+1)! \, C_{n,r} = \sum_{k=0}^{r} (-1)^{k} (2n-2k+1) \binom{n+\tfrac{5}{12}}{k} \binom{n-k-\tfrac{5}{12}}{r-k} k! (r-k)! . \label{eq:Cnr}
\end{align}
\end{lem}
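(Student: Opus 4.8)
The plan is to rewrite the right-hand side of \eqref{eq:Cnr} into a form that telescopes. First I would abbreviate $a = n + \tfrac{5}{12}$ and $b = n - \tfrac{5}{12}$ and introduce the falling factorial $x^{\underline{m}} = x(x-1)\cdots(x-m+1) = \binom{x}{m}\,m!$ (with $x^{\underline{0}}=1$), so that the three $n$-dependent factors in the summand read $\binom{a}{k}k! = a^{\underline{k}}$, $\binom{b-k}{r-k}(r-k)! = (b-k)^{\underline{r-k}}$, together with the linear term $2n-2k+1$.

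The one step requiring insight is the splitting
\begin{align*}
2n - 2k + 1 = (a-k) + (b-k+1),
\end{align*}
which holds because $a+b = 2n$. Substituting this and applying the elementary identities $(a-k)\,a^{\underline{k}} = a^{\underline{k+1}}$ and $(b-k+1)(b-k)^{\underline{r-k}} = (b-k+1)^{\underline{r-k+1}}$ turns the right-hand side of \eqref{eq:Cnr} into $S_{1}+S_{2}$, where
\begin{align*}
S_{1} = \sum_{k=0}^{r} (-1)^{k}\, a^{\underline{k+1}}\,(b-k)^{\underline{r-k}}, \qquad S_{2} = \sum_{k=0}^{r} (-1)^{k}\, a^{\underline{k}}\,(b-k+1)^{\underline{r-k+1}}.
\end{align*}

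Next I would set $T_{k} = (-1)^{k}\, a^{\underline{k}}\,(b-k+1)^{\underline{r-k+1}}$ for $0 \le k \le r+1$, so that $S_{2} = \sum_{k=0}^{r} T_{k}$, while the shift $j = k+1$ identifies $S_{1}$ with $-\sum_{j=1}^{r+1} T_{j}$. The two sums then telescope to
\begin{align*}
S_{1} + S_{2} = T_{0} - T_{r+1} = (b+1)^{\underline{r+1}} + (-1)^{r}\, a^{\underline{r+1}},
\end{align*}
where I used $a^{\underline{0}} = (b-r)^{\underline{0}} = 1$ for the boundary terms. Since $b+1 = n + \tfrac{7}{12}$, we have $(b+1)^{\underline{r+1}} = \binom{n+\frac{7}{12}}{r+1}(r+1)!$ and $a^{\underline{r+1}} = \binom{n+\frac{5}{12}}{r+1}(r+1)!$, and the expression collapses to $(r+1)!\,C_{n,r}$ by the definition of $C_{n,r}$.

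I do not expect any serious obstacle: once the decomposition $2n-2k+1 = (a-k)+(b-k+1)$ is spotted, the remainder is a mechanical telescoping of falling factorials, with no need for hypergeometric summation or the results of the preceding sections. The only genuinely creative point is recognizing that this particular splitting is exactly what linearizes the factor $2n-2k+1$ against the two adjacent falling factorials, producing cancelling boundary contributions.
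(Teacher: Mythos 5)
Your proof is correct: the splitting $2n-2k+1=(a-k)+(b-k+1)$ with $a=n+\tfrac{5}{12}$, $b=n-\tfrac{5}{12}$ is valid since $a+b=2n$, the falling-factorial absorption identities are applied correctly, and the telescoping leaves exactly $T_{0}-T_{r+1}=(b+1)^{\underline{r+1}}+(-1)^{r}a^{\underline{r+1}}=(r+1)!\,C_{n,r}$, which I have checked against small cases. The route is genuinely different from the paper's: there, one writes $D_{n,r}$ for the right-hand side of \eqref{eq:Cnr}, verifies that it satisfies the first-order recursion $D_{n,r+1} = (-1)^{r+1}(2n-2r-1)\binom{n+5/12}{r+1}(r+1)! + (n-r-\tfrac{5}{12})D_{n,r}$, checks that $(r+1)!\,C_{n,r}$ satisfies the same recursion, and matches the initial value $D_{n,0}=2n+1=1!\,C_{n,0}$. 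The paper's argument is shorter to state but requires two separate recursion verifications (each a routine but nontrivial computation); your telescoping is a closed, one-pass derivation that produces the two terms of $C_{n,r}$ directly as the two boundary terms, which arguably explains \emph{why} $C_{n,r}$ has the shape $(-1)^{r}\binom{n+5/12}{r+1}+\binom{n+7/12}{r+1}$ rather than merely confirming it. Both are elementary and neither needs hypergeometric machinery.
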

\begin{proof}
Let the right-hand side of equation \eqref{eq:Cnr} be $D_{n,r}$. Then, for a fixed $n$, it is easy to see that $D_{n,r}$ satisfies the following recursion: 
\begin{align*}
D_{n,r+1} = (-1)^{r+1} (2n-2r-1) \binom{n+\tfrac{5}{12}}{r+1} (r+1)!  + \left( n-r-\frac{5}{12} \right) D_{n,r}.
\end{align*}
Since $(r+1)! \, C_{n,r}$ also satisfies the same recursion and the initial value $1! \, C_{n,0}=F_{n,0}=2n+1$ coincide, the claim follows.
\end{proof}

\begin{proof}[Proof of Proposition \ref{prop:orthpolyexp}]
First, we focus on the Christoffel--Darboux identity \cite[Thm.~4.5]{chihara1978intortho} for the Atkin polynomials $A_{n,2}(X)$, which can be proved repeatedly using the recursion formula \eqref{eq:d1linrecWAB}:
\begin{align*}
&{} \frac{A_{n+1,2}(X)A_{n,2}(Y) - A_{n,2}(X) A_{n+1,2}(Y)}{X-Y} \\
&= \sum_{k=0}^{n} \left( \prod_{\ell=k+1}^{n} b_{\ell ,2} \right) A_{k,2}(X) A_{k,2}(Y) \quad (n\ge0),
\end{align*}
where the empty product is be understood as 1. 
Dividing both sides by $A_{n,2}(Y)$, the left-hand side gives the Christoffel transform \eqref{eq:Christoffel transform}, so the expansion of $A_{n,6}(X)$ (resp., $A_{n,8}(X)$) by $A_{n,2}(X)$ is obtained by setting $Y = 0$ (resp., $Y=1728$) and transforming the coefficients appropriately. 
Similarly, by applying the Christoffel--Darboux identity to $A_{n,6}(X)$ and $A_{n,8}(X)$, we obtain the expansion formula for $A_{n+1,0}(X)$. The last equality is obtained by substituting \eqref{eq:expAn6byAn2} into \eqref{eq:expAn0byAn6} and transforming it as follows. By setting $k+l=r$  we have
\begin{align*}
&{}A_{n+1,0}(X) = \sum_{k=0}^{n} 12^{3k} \cdots \sum_{l=0}^{n-k} (-12)^{3l} \cdots A_{n-k-l,2}(X) \\
&= \sum_{r=0}^{n} (-12)^{3r} \sum_{k=0}^{r} (-1)^{k} \cdots A_{n-r,2}(X) \\
&= \sum_{r=0}^{n} (-12)^{3r} \frac{(2n-2r)! r!}{(2n+1)!} \binom{n+\tfrac{1}{12}}{r} (\text{the right-hand side of \eqref{eq:Cnr}}) A_{n-r,2}(X).
\end{align*}
(The part of the above equation omitted by the dotted line is the product of the appropriate binomial coefficients.) Thus, with the help of Lemma \ref{lem:Cnr}, the proof is complete. 
\end{proof}

\begin{thm}\label{prop:imageofAtkinL} 
\begin{enumerate}[(i)]
\item For any integer $m\ge0$, we have
\begin{align*}
B_{m+1,0}(X) &= \mathcal{L} \left(  \frac{ X(X-1728) A_{m+1,0}(X) - j(j-1728) A_{m+1,0}(j)}{X-j} \right) , \\
B_{m,2}(X) &= \mathcal{L} \left(  \frac{A_{m,2}(X) - A_{m,2}(j)}{X-j} \right) , \\
B_{m,6}(X) &= \mathcal{L} \left(  \frac{X A_{m,6}(X) - j A_{m,6}(j)}{X-j} \right) , \\
B_{m,8}(X) &= \mathcal{L} \left(  \frac{(X-1728) A_{m,8}(X) - (j-1728) A_{m,8}(j)}{X-j} \right) .
\end{align*}
\item  For any integer $m\ge0$, the following equations hold as a generalization of \eqref{eq:Stieltjes function}. Here, the normalizing factor $N_{m,a}$ is given by \eqref{eq:d1nf02} and \eqref{eq:d1nf68}.
\begin{align*}
&{} \mathcal{L} \left(  \frac{j(j-1728) A_{m+1,0}(j)}{j(p)-j} \right) = - N_{m+1,0}\, \frac{{}_{2}F_{1} \left( m+\frac{13}{12}, m+\frac{17}{12} ; 2m+3 ; \frac{1728}{j(p)} \right) }{j(p)^{m+1}\, {}_{2}F_{1} \left( \frac{1}{12}, \frac{5}{12} ; 1 ; \frac{1728}{j(p)} \right) } , \\
&{} \mathcal{L} \left(  \frac{A_{m,2}(j)}{j(p)-j} \right)  = N_{m,2}\, \frac{{}_{2}F_{1} \left( m+\frac{5}{12}, m+\frac{13}{12} ; 2m+1 ; \frac{1728}{j(p)} \right) }{j(p)^{m+1}\, {}_{2}F_{1} \left( \frac{1}{12}, \frac{5}{12} ; 1 ; \frac{1728}{j(p)} \right) } , \\
&{} \mathcal{L} \left(  \frac{j A_{m,6}(j)}{j(p)-j} \right) = N_{m,6}\, \frac{{}_{2}F_{1} \left( m+\frac{13}{12}, m+\frac{17}{12} ; 2m+2 ; \frac{1728}{j(p)} \right) }{j(p)^{m+1}\, {}_{2}F_{1} \left( \frac{1}{12}, \frac{5}{12} ; 1 ; \frac{1728}{j(p)} \right) } , \\
&{} \mathcal{L} \left(  \frac{(j-1728) A_{m,8}(j)}{j(p)-j} \right) = - N_{m,8}\, \frac{{}_{2}F_{1} \left( m+\frac{5}{12}, m+\frac{13}{12} ; 2m+2 ; \frac{1728}{j(p)} \right) }{j(p)^{m+1}\, {}_{2}F_{1} \left( \frac{1}{12}, \frac{5}{12} ; 1 ; \frac{1728}{j(p)} \right) } .
\end{align*}
\item For any integers $m,n\ge0$, we have the following orthogonality relations:
\begin{align*}
&{} \mathcal{L} \left( j(j-1728) A_{m+1,0}(j) A_{n+1,0}(j) \right) = - N_{m+1,0} \, \delta_{m,n} , \\
&{} \mathcal{L} \left( A_{m,2}(j) A_{n,2}(j) \right) = N_{m,2}\, \delta_{m,n},  \\
&{} \mathcal{L} \left( j A_{m,6}(j) A_{n,6}(j) \right) = N_{m,6}\, \delta_{m,n},  \\
&{} \mathcal{L} \left( (j-1728) A_{m,8}(j) A_{n,8}(j) \right) = - N_{m,8}\, \delta_{m,n} .
\end{align*}
\end{enumerate}
\end{thm}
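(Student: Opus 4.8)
The plan is to prove all three parts in a unified way across $r\in\{0,2,6,8\}$. Introduce the weights $c_{0}(X)=X(X-1728)$, $c_{2}(X)=1$, $c_{6}(X)=X$, $c_{8}(X)=X-1728$, set $m'=m+1$ when $r=0$ and $m'=m$ otherwise, and put $P_{m,r}(X)=c_{r}(X)A_{m',r}(X)$; since $c_{r}$ is index-independent, each $\{P_{m,r}\}_{m}$ satisfies a three-term recursion of the shape \eqref{eq:d1linrecWAB}. For part (i), whose right-hand sides are $Q_{m}(X):=\mathcal{L}\bigl((P_{m,r}(X)-P_{m,r}(j))/(X-j)\bigr)$, I would begin from the elementary identity valid for any sequence with $P_{m+1}=(X-a_{m})P_{m}-b_{m}P_{m-1}$,
\[
\frac{P_{m+1}(X)-P_{m+1}(j)}{X-j}=(X-a_{m})\frac{P_{m}(X)-P_{m}(j)}{X-j}+P_{m}(j)-b_{m}\frac{P_{m-1}(X)-P_{m-1}(j)}{X-j}.
\]
Applying $\mathcal{L}$ in $j$ gives $Q_{m+1}=(X-a_{m})Q_{m}-b_{m}Q_{m-1}+\mathcal{L}(P_{m,r})$, and the extra term $\mathcal{L}(P_{m,r})$ is one of $\mathcal{L}(A_{m,2})$, $\mathcal{L}(jA_{m,6})$, $\mathcal{L}((j-1728)A_{m,8})$, $\mathcal{L}(j(j-1728)A_{m+1,0})$, each vanishing in the relevant range by Proposition~\ref{prop:orthogonality} (together with $\mathcal{L}(A_{m,2})=0$ for $m\ge1$). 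Thus each $Q_{m}$ obeys the same recursion as the adjoint polynomials $B_{m',r}$, and matching the low-index seeds against the initial-value table finishes (i); for $r=2$, for instance, $Q_{0}=0$, $Q_{1}=\mathcal{L}(1)=1$ and $Q_{2}=X+\mathcal{L}(j)-1640=X-920$ reproduce $B_{0,2},B_{1,2},B_{2,2}$.

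For part (ii) I would decompose $P_{m,r}(j)/(X-j)=\bigl(P_{m,r}(j)-P_{m,r}(X)\bigr)/(X-j)+P_{m,r}(X)/(X-j)$ and apply $\mathcal{L}$; using part (i) and the evaluation $\mathcal{L}\bigl(1/(X-j)\bigr)=E_{2}E_{4}/(XE_{6})$ from \eqref{eq:Stieltjes function} (with $X=j(p)$) this becomes
\[
\mathcal{L}\!\left(\frac{P_{m,r}(j)}{X-j}\right)=\frac{E_{2}E_{4}}{X E_{6}}\,P_{m,r}(X)-B_{m,r}(X).
\]
Because $X=E_{4}^{3}/\Delta$ and $X-1728=E_{6}^{2}/\Delta$, the coefficient $c_{r}(X)E_{2}E_{4}/(XE_{6})$ simplifies to a single monomial in $E_{2},E_{4},E_{6},\Delta$ times $A_{m',r}(X)$, so after multiplying through by the correct power of $E_{4},E_{6},\Delta$ the right-hand side becomes exactly the bracket of the Kaneko--Koike formulas \eqref{eq:G12mAB}--\eqref{eq:G12m8AB} (taken at index $m+1$ when $r=0$). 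Hence $\mathcal{L}(P_{m,r}(j)/(X-j))$ equals $\pm N_{m',r}G_{12m'+r}$ divided by that modular factor, and inserting the hypergeometric expressions of $G$ (Proposition~\ref{prop:Gto2F1}) and of $E_{4},E_{6}$ (Proposition~\ref{prop:EisensteinHyp}) yields the stated ${}_{2}F_{1}$ ratios. For $r\in\{0,8\}$ the hypergeometric parameters already match, while for $r\in\{2,6\}$ they differ from the target by Euler's transformation \eqref{eq:Euler}, whose prefactor $(1-1728/X)^{1/2}$ accounts for the single power of $E_{6}$ present in the modular factor in exactly those cases.

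Part (iii) then follows by coefficient extraction: expanding the left side of (ii) as $\sum_{n\ge0}\mathcal{L}(P_{m,r}(j)j^{n})X^{-n-1}$ and the right side as $\pm N_{m',r}X^{-m-1}(1+O(1/X))$ (both ${}_{2}F_{1}$ tending to $1$) forces $\mathcal{L}(P_{m,r}(j)j^{n})=0$ for $n<m$ and $\mathcal{L}(P_{m,r}(j)j^{m})=\pm N_{m',r}$. Since each $A_{m',r}$ is monic of the degree listed in Theorem~\ref{thm:explicitformulaAB} ($\deg A_{m+1,0}=m$, and $\deg A_{m,r}=m$ for $r\in\{2,6,8\}$), expanding the second factor $A_{n',r}$ in the monomial basis and applying these moment vanishings produces the four orthogonality relations with the stated signs and norms. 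I expect the main obstacle to be the hypergeometric bookkeeping in part (ii): pairing the parameters produced by Proposition~\ref{prop:Gto2F1} with the target ${}_{2}F_{1}$ and applying Euler's transformation \eqref{eq:Euler} correctly for $r\in\{2,6\}$, while tracking the powers of $E_{4},E_{6},\Delta$ and $j$; by contrast, the recursion in (i) and the coefficient extraction in (iii) are routine once Proposition~\ref{prop:orthogonality} and part (ii) are in hand.
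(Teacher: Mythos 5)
Your proposal is correct and follows essentially the same route as the paper's proof: part (i) via the three-term recursion with the extra term killed by Proposition~\ref{prop:orthogonality} and a check of initial values, part (ii) via the decomposition through $\mathcal{L}(1/(j(p)-j))$, the Kaneko--Koike formulas \eqref{eq:G12mAB}--\eqref{eq:G12m8AB}, and the hypergeometric expressions with Euler's transformation \eqref{eq:Euler}, and part (iii) by coefficient extraction. The only difference is cosmetic: you phrase the four cases uniformly through the weights $c_{r}(X)$, whereas the paper works out the $r=6$ case and declares the others analogous.
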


\begin{proof}
In assertions (i) to (iii), we only prove the cases related to $A_{m,6}(X)$ or $B_{m,6}(X)$, and the other cases can be proved in a similar way. 
\begin{enumerate}[(i)]
\item Denote the expression on the right-hand side of the corresponding formula by $Z_{m}(X)$.
We can check directly that $Z_{m}(X)=B_{m,6}(X)$ for $m=0$ and $1$. For $m\ge1$, since $A_{m,6}(X)$ satisfies the recursion \eqref{eq:d1linrecWAB}, we have
\begin{align*}
&{} (X-a_{m,6})(X A_{m,6}(X) - j A_{m,6}(j)) - b_{m,6} (X A_{m-1,6}(X) - j A_{m-1,6}(j)) \\
&= X \{(X-a_{m,6})A_{m,6}(X) -b_{m,6}A_{m-1,6}(X)\} \\
&{} \quad - j \{(j-a_{m,6})A_{m,6}(j) -b_{m,6}A_{m-1,6}(j) \} - j (X-j) A_{m,6}(j) \\
&= X A_{m+1,6}(X) - j A_{m+1,6}(j) - j (X-j) A_{m,6}(j).
\end{align*}
Dividing both sides of the above equation by $X-j$ and acting $\mathcal{L}$, we have
\begin{align*}
&{} (X-a_{m,6}) Z_{m}(X) - b_{m,6} Z_{m-1}(X) = Z_{m+1}(X) - \mathcal{L} \left( j A_{m,6}(j) \right). 
\end{align*}
We see that the second term of the right-hand side vanishes from Proposition~\ref{prop:orthogonality}, and therefore the polynomials $Z_{m}(X)$ and $B_{m,6}(X)$ have the same initial values and satisfy the same recursion, we conclude that $Z_{m}(X) = B_{m,6}(X)$ for $m\ge0$. 

\item One can easily see that
\begin{align*}
\mathcal{L} \left(  \frac{j A_{m,6}(j)}{j(p)-j} \right) &= \mathcal{L} \left( \frac{j(p)A_{m,6}(j(p))}{j(p) - j} \right) - \mathcal{L} \left(  \frac{j(p)A_{m,6}(j(p)) - j A_{m,6}(j)}{j(p)-j} \right) \\
&= j(p)A_{m,6}(j(p)) \mathcal{L} \left( \frac{1}{j(p) - j} \right) - B_{m,6}(j(p)) \quad \text{(by (i))}\\
&= \frac{E_{2}(p)E_{4}(p) A_{m,6}(j(p)) -  E_{6}(p) B_{m,6}(j(p)) }{E_{6}(p)} \quad  \text{(by \eqref{eq:Stieltjes function})} \\
&= \frac{N_{m,6}\, G_{12m+6}(p)}{E_{6}(p) \Delta(p)^{m}} \quad \text{(by \eqref{eq:G12m6AB})}.
\end{align*}
The proof is completed by transforming this equation into a hypergeometric series using Propositions \ref{prop:EisensteinHyp} and \ref{prop:Gto2F1} and the transformation formula \eqref{eq:Euler}. 

As an alternative proof, we first show the result for $\mathcal{L} \left( A_{m,2}(j)/(j(p)-j) \right)$, then transform $\mathcal{L} \left( j A_{m,6}(j)/(j(p)-j) \right)$ using the expansion \eqref{eq:recAn6An2}, 
and obtain the desired form by using the appropriate contiguous relation of a hypergeometric series. 

\item By comparing the coefficients of $j(p)^{-\ell}$ in 
\begin{align*}
\sum_{k=0}^{\infty} \mathcal{L}\left( j^{k+1} A_{m,6}(j) \right)\, j(p)^{-k-1} = \mathcal{L} \left(  \frac{j A_{m,6}(j)}{j(p)-j} \right) = \frac{N_{m,6}}{j(p)^{m+1}} \left( 1 + O \left(\frac{1}{j(p)}\right) \right)
\end{align*}
obtained from assertion (ii), we have
\begin{align*}
\mathcal{L}\left( j^{k+1} A_{m,6}(j) \right)=0 \;\; (0\le k \le m-1) , \quad \mathcal{L}\left( j^{m+1} A_{m,6}(j) \right) = N_{m,6}.
\end{align*}
Furthermore, by using this orthogonality relation, we also have 
\begin{align*}
\mathcal{L}\left( j A_{m,6}(j)^{2} \right) = \mathcal{L}\left( j (j^{m} + (\text{lower order terms})) A_{m,6}(j) \right) =\mathcal{L}\left( j^{m+1} A_{m,6}(j) \right) = N_{m,6}.
\end{align*} 
Summarizing these results, we finally conclude that $\mathcal{L} \left( j A_{m,6}(j) A_{n,6}(j) \right) = N_{m,6}\, \delta_{m,n}$.
\end{enumerate}
This completes the proof. 
\end{proof}

\begin{rem}\label{rem:Atkinweightfunction}
\begin{enumerate}[(i)]
\item The Atkin inner product \eqref{def:Atkininnerprod} has the following integral representation (see \cite[\S 5]{kaneko1998supersingular}). For $f,g \in \mathbb{C}[j]$, we have
\begin{align*}
(f,g)= \int_{0}^{1728} f(j)g(j) w(j) d j, \quad w(j) = \frac{6}{\pi} \theta' (j).
\end{align*}
Here the function $\theta : [0,1728] \rightarrow [\pi/3, \pi/2]$ is the inverse of the monotone increasing function $\theta \mapsto j(e^{i\theta})$ and then $w(j)>0$. 
For a concrete expression of the weight function $w(j)$ by a hypergeometric series, the interested reader is referred to \cite[Thm.~7.1]{guindy2014atkin}.
\item Note that by Favard's theorem \cite[Thm.~4.4]{chihara1978intortho}, the polynomial $B_{n,r}(X)$ is also an orthogonal polynomial for the appropriate linear functional. 
Here we will discuss $B_{n,2}(X)$ as an example of this fact. To do so, we define the linear functional $\mathcal{L}^{*}:\mathbb{C}[j] \rightarrow \mathbb{C}$ as follows.
\begin{align}
&{} \mathcal{L}^{*} \left( \frac{1}{j(p) - j} \right) = \sum_{n=0}^{\infty} \mathcal{L}^{*}(j^{n}) j(p)^{-n-1} \notag \\ 
&\coloneqq \frac{-1}{393120} \left\{ \mathcal{L} \left( \frac{1}{j(p) - j} \right)^{-1} - A_{1,2}(j(p)) \right\} \label{eq:linearfunctB} \\
&= \frac{G_{14}(p)}{E_{2}(p) \Delta(p)} = \frac{{}_{2}F_{1} \left( \frac{11}{12}, \frac{19}{12} ; 3 ; \frac{1728}{j(p)} \right) }{j(p)\, {}_{2}F_{1} \left( -\frac{1}{12}, \frac{7}{12} ; 1 ; \frac{1728}{j(p)} \right) } .  \notag
\end{align}
The first few values of $\mathcal{L}^{*}(j^{n})$ are given by
\begin{align*}
&\mathcal{L}^{*}(1) =1,\, \mathcal{L}^{*}(j)=920,\, \mathcal{L}^{*}(j^{2})=1024050, \\
&\mathcal{L}^{*}(j^{3}) =1261043280,\, \mathcal{L}^{*}(j^{4}) =1653817332720.
\end{align*}
Then the polynomial $\beta_{m}(X) \coloneqq B_{m+1,2}(X)$ is the orthogonal polynomial corresponding to $\mathcal{L}^{*}$, that is, we have $\mathcal{L}^{*}\left( \beta_{m}(j) \beta_{n}(j) \right) = \mathcal{L}^{*} \left( \beta_{m}(j)^{2} \right) \delta_{m,n}$ for $m,n\ge0$. 
As a more general problem, see \cite{peherstorfer1992finite} for what kind of orthogonal polynomials occur for the ``linear fractional transformation" of the Stieltjes function as in \eqref{eq:linearfunctB}.
\item We have already seen that the Atkin orthogonal polynomials arise from the quasimodular solutions of the Kaneko--Zagier equation \eqref{eq:2ndKZeqn}. On the other hand, the modular solutions of the same equation can essentially be expressed by Jacobi polynomials, which are the classical  orthogonal polynomials. See \cite[\S 8]{kaneko1998supersingular} for details.
\end{enumerate}
\end{rem}

For the sake of brevity, we define some additional symbols here:
\begin{align}
\begin{split}
&{} A_{m,4}(j)\coloneqq A_{m,0}(j), \; A_{m,10}(j)\coloneqq A_{m,6}(j), \; A_{m,14}(j)\coloneqq A_{m+1,2}(j), \\
&{} N_{m,4}\coloneqq N_{m,0}, \; N_{m,10}\coloneqq N_{m,6}, \; N_{m,14}\coloneqq N_{m+1,2}. 
\end{split} \label{eq:addsymAN}
\end{align}
Under this notation, and recalling that $G_{6n+4} = E_{4}G_{6n}$, we can express the normalized extremal quasimodular forms based on the assertion (ii) of Proposition~\ref{prop:imageofAtkinL} as follows\footnote{The left-hand side of \eqref{eq:exqmfasimageofAtkinL} belongs to $\mathbb{Z}[\![p]\!]$. See Theorem 3 in \cite{nakaya2023determination}: there a stronger claim is proved.}.
\begin{align}
\begin{split}
&{} (-1)^{1-\varepsilon} N_{m, 4\delta+6\varepsilon}\, G_{12m+4\delta+6\varepsilon}(p) \\
&= \mathcal{L} \left( j^{1-\lfloor \delta/2 \rfloor} (j-1728)^{1-\varepsilon} A_{m, 4\delta+6\varepsilon}(j) \, \frac{E_{4}(p)^{\delta}E_{6}(p)^{\varepsilon}\Delta(p)^{m}}{j(p)-j} \right),
\end{split} \label{eq:exqmfasimageofAtkinL}
\end{align}
where $12m+4\delta+6\varepsilon \in 2\mathbb{Z}_{\ge 1}\backslash \{4\}$ and $m \in \mathbb{Z}_{\ge-1}, \delta \in \{0,1,2\}, \varepsilon \in \{0,1\}$. 
In addition, the orthogonality relation (iii) of Proposition \ref{prop:imageofAtkinL} can be summarized as 
\begin{align}
\mathcal{L} \left( j^{1-\lfloor \delta/2 \rfloor} (j-1728)^{1-\varepsilon} A_{m, 4\delta+6\varepsilon}(j) A_{n, 4\delta+6\varepsilon}(j) \right) = (-1)^{1-\varepsilon} N_{m,4\delta+6\varepsilon} \, \delta_{m,n} . \label{eq:orthogonalityrelation}
\end{align}
By focusing on the part $E_{4}(p)^{\delta}E_{6}(p)^{\varepsilon}\Delta(p)^{m}/(j(p)-j)$ in \eqref{eq:exqmfasimageofAtkinL}, we will derive a formula for the Fourier coefficients of the normalized extremal quasimodular forms in the next section.


\section{Generalized Faber polynomials and $G_{k}$} \label{sec:genFaberpoly}
We begin by reviewing certain results for weakly holomorphic modular forms in \cite{duke2008zeros} by Duke and Jenkins. 
Let $M_{k}^{!}=M_{k}^{!}(\Gamma)$ denote the vector space of all weakly holomorphic modular forms of weight $k \in 2 \mathbb{Z}$ on $\Gamma$. 
For each even integer $k$, we write $k=12m+4\delta+6\varepsilon$ with uniquely determined $m \in \mathbb{Z}, \, \delta \in \{0,1,2\}$, and $\varepsilon \in \{0,1\}$.
Under this notation, for each integer $\ell \ge -m$ there is a unique form $f_{k,\ell} \in M_{k}^{!}$ with a  Fourier expansion of the form
\begin{align}
f_{k, \ell} = f_{12m+4\delta+6\varepsilon,\, \ell}(q) = q^{-\ell} + O(q^{m+1}) . \label{eq:basefkl}
\end{align}
This form can be expressed more concretely as
\begin{align}
f_{k, \ell} = E_{4}^{\delta}E_{6}^{\varepsilon} \Delta^{m} F_{k, \ell+m}(j), \label{eq:defgenFaberpoly}
\end{align}
where $F_{k,n}(X)$ is a monic polynomial of degree $n$ with integer coefficients, and we call it the generalized Faber polynomial of weight $k$ and degree $n$ on $\Gamma$. 
Note that these forms $f_{k, \ell}$ with $\ell \ge -m$ are a basis of $M_{k}^{!}$. 
In particular, if $k\ge4$, the set $\{ f_{k,0}, f_{k,-1}, \cdots , f_{k,-m} \}$ is called the Victor Miller basis\footnote{It is sometimes called the ``reduced row echelon basis'', after the appearance of the Fourier expansion of its elements.} for $M_{k}$ (see \cite[Ch.~X, Thm.~4.4]{lang1995introductionmf}). Also, $f_{k,0}$ is called the (normalized) extremal modular form\footnote{In fact, the corresponding Faber polynomials are related to the supersingular polynomials. More specifically,  $X^{\delta}(X-1728)^{\varepsilon} F_{p-1,m}(X) \equiv ss_{p}(X) \pmod{p}$ holds under the same notation as Theorem \ref{thm:congruenceAtkin-like}. See \cite[Cor.~3]{getz2004generalizationzeros} and its proof. } of weight $k$ on $\Gamma$, which is closely related to the extremal even unimodular lattices (see \cite[pp.~33--36]{zagier2008elliptic} and \cite[p.~227]{bannai2006sphericaldesigns}).

The generating function of $f_{k, \ell}(q)$ is given as follows.
\begin{thm}[Duke--Jenkins]
For any even integer $k=12m+4\delta+6\varepsilon$, we have
\begin{align*}
\sum_{n\ge -m} f_{k,n}(q) p^{n} = \frac{f_{k,-m}(q) f_{2-k,m+1}(p)}{j(p) - j(q)},
\end{align*}
where $p$ and $q$ are independent formal variables.
\end{thm}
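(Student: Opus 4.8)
The plan is to prove the identity by the uniqueness principle for the basis: since $f_{k,n}$ is the unique element of $M_k^{!}$ whose $q$-expansion is $q^{-n}+O(q^{m+1})$, it suffices to write the right-hand side as a Laurent series in $p$, say $\tfrac{f_{k,-m}(q)f_{2-k,m+1}(p)}{j(p)-j(q)}=\sum_{n\ge -m}R_n(q)\,p^n$, and to check that each coefficient $R_n(q)$ (i) lies in $M_k^{!}$ and (ii) has $q$-expansion $q^{-n}+O(q^{m+1})$. Before starting I would record the weight bookkeeping that underlies everything: writing $2-k=12m'+4\delta'+6\varepsilon'$, one checks in all six cases $(\delta,\varepsilon)$ that $m'=-m-1$, $\delta+\delta'=2$ and $\varepsilon+\varepsilon'=1$, so that $f_{2-k,m+1}$ is exactly the extremal form $E_{4}^{\delta'}E_{6}^{\varepsilon'}\Delta^{-m-1}$ of weight $2-k$. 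It is also worth noting that the case $k=0$ (so $m=\delta=\varepsilon=0$) reduces to the classical generating function $\sum_{N\ge 0}j_N(q)p^N=\tfrac{E_4(p)^2E_6(p)/\Delta(p)}{j(p)-j(q)}$ of the Faber polynomials $j_N$ of $j$, which serves as a guide.

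For property (i) I would expand $\tfrac{1}{j(p)-j(q)}=\sum_{N\ge 0}j(q)^N j(p)^{-N-1}$. Since $j(p)^{-N-1}=p^{N+1}+O(p^{N+2})$ and $f_{2-k,m+1}(p)=p^{-m-1}+O(p^{-m})$, the product $f_{2-k,m+1}(p)j(p)^{-N-1}$ begins at $p^{N-m}$; hence for fixed $n$ only the finitely many indices $N\le m+n$ contribute to $R_n$, and
\[
R_n(q)=\sum_{N=0}^{m+n}c_{N,n}\,f_{k,-m}(q)\,j(q)^N,\qquad c_{N,n}=[p^n]\bigl(f_{2-k,m+1}(p)j(p)^{-N-1}\bigr).
\]
Each summand is $f_{k,-m}(q)j(q)^N=E_4^{\delta+3N}E_6^{\varepsilon}\Delta^{m-N}\in M_k^{!}$, so $R_n\in M_k^{!}$. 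The same computation shows $c_{m+n,n}=1$, so $R_n$ begins with $q^{-n}$ with coefficient $1$; what remains for (ii) is to kill the intermediate terms $q^s$ with $-n<s\le m$.

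The crux is property (ii). Here I would compare with the expansion $\tfrac{1}{j(p)-j(q)}=-\sum_{M\ge 0}j(p)^M j(q)^{-M-1}$, valid in the opposite regime. In it every term carries the $q$-factor $f_{k,-m}(q)j(q)^{-M-1}=q^{m+M+1}(1+O(q))$, so this expansion of $R$ involves only the powers $q^{m+1},q^{m+2},\dots$; the $q$-principal part is invisible to it. The discrepancy between the two expansions is supported on the diagonal $j(p)=j(q)$, so the $q$-principal part of $R_n$ equals $-\operatorname*{Res}_{p=q}\bigl[R(p,q)\,p^{-n-1}\bigr]$. Using $j(p)-j(q)\sim\tfrac{Dj(q)}{q}(p-q)$ together with $Dj=-E_4^2E_6/\Delta$ (from \eqref{eq:DEisen}) and the bookkeeping above, one finds
\[
\operatorname*{Res}_{p=q}\Bigl[\frac{f_{k,-m}(q)\,f_{2-k,m+1}(p)}{j(p)-j(q)}\,p^{-n-1}\Bigr]=\frac{f_{k,-m}(q)\,f_{2-k,m+1}(q)}{Dj(q)}\,q^{-n}=-q^{-n},
\]
because $f_{k,-m}f_{2-k,m+1}/Dj=-E_4^{\delta+\delta'-2}E_6^{\varepsilon+\varepsilon'-1}=-1$. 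Hence the $q$-principal part of $R_n$ is $q^{-n}$ and $R_n=q^{-n}+O(q^{m+1})$, which together with (i) and uniqueness finishes the proof.

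The main obstacle is to make the diagonal step rigorous, i.e. to legitimately reconcile the two Laurent expansions of $1/(j(p)-j(q))$: one must either interpret their difference as the formal delta supported on $j(p)=j(q)$ and verify its reproducing property, or deform a horizontal contour in the $\sigma$-plane ($p=e^{2\pi i\sigma}$) from near the cusp down past $\sigma=\tau$, keeping track of the $\Gamma$-translates of $\tau$ and of the boundary behavior as $\Im\sigma\to 0$ inside a fundamental domain. The conceptual engine behind property (ii) is the Fourier-coefficient duality $a_k(n,s)=-a_{2-k}(s,n)$, where $f_{k,n}=\sum_s a_k(n,s)q^s$; this follows from the vanishing of the constant term of every weight-two weakly holomorphic form (apply the residue theorem to $h\,d\tau$ on $X(1)$ with $h=f_{k,n}f_{2-k,\ell}\in M_2^{!}$), and the proof can alternatively be organized around it.
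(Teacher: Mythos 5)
The paper does not prove this statement at all: it is quoted verbatim from Duke--Jenkins \cite{duke2008zeros} (Lemma~2 there), so there is no internal proof to compare against. Your argument is correct and is essentially the standard proof from that reference: expand the right-hand side as $\sum_n R_n(q)p^n$, observe that each $R_n$ lies in $M_k^{!}$ and has leading term $q^{-n}$ via the expansion $\sum_N j(q)^N j(p)^{-N-1}$, and then control the remaining principal part by shifting the $p$-contour past the diagonal; your weight bookkeeping ($\delta+\delta'=2$, $\varepsilon+\varepsilon'=1$, $m'=-m-1$, so $f_{k,-m}f_{2-k,m+1}=E_4^2E_6\Delta^{-1}=-Dj$) and the resulting residue $-q^{-n}$ are exactly right, and the conclusion follows from the uniqueness of $f_{k,n}$. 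The one step you leave as a sketch --- that in the annulus swept out by the contour the only pole is at $p=q$ --- is indeed the only remaining point, and it is routine: for $\tau$ in the standard fundamental domain with $\Im\tau$ large, every $\gamma\tau$ with $\gamma\notin\Gamma_\infty$ satisfies $\Im(\gamma\tau)\le 1/\Im\tau$, so no other solution of $j(p)=j(q)$ enters before the contour reaches $\Im\sigma=\Im\tau$. Your closing remark that the argument can be reorganized around the duality $a_k(n,s)=-a_{2-k}(s,n)$ (vanishing of constant terms of weight-two forms) is also accurate; that is the form in which Duke--Jenkins state the consequence of this generating function.
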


Rewriting this theorem using \eqref{eq:defgenFaberpoly}, we obtain the generating function of the generalized Faber polynomials:
\begin{align}
&{} \sum_{n=0}^{\infty} F_{2-(12m +4\delta +6\varepsilon ), n}(j)\, p^{n+m+1} = \frac{E_{4}(p)^{\delta}E_{6}(p)^{\varepsilon}\Delta(p)^{m}}{j(p)-j}, \label{eq:genfct2-k} \\
&{} \sum_{n=0}^{\infty} F_{12m+4\delta+6\varepsilon, n}(j)\, p^{n-m} 
= \frac{E_{4}(p)^{2-\delta}E_{6}(p)^{1-\varepsilon}}{\Delta(p)^{m+1}(j(p)-j)}. \label{eq:genfctk}
\end{align}
We give a useful "hypergeometric" formula for computing generalized Faber polynomials in Section \ref{sec:HypgenFaber}.

In equations \eqref{eq:genfct2-k} and \eqref{eq:genfctk} above, the result corresponding to $m=0, \;\delta \in \{0,1,2\},\, \varepsilon \in \{0,1\}$ has been proved by Asai, Kaneko and Ninomiya \cite[Cor.~4]{asai1997zeros}. 
Additionally, since $p\tfrac{d}{d p} j(p) = - \tfrac{E_{4}(p)^{2}E_{6}(p)}{\Delta(p)}$ holds, 
we have $-p\tfrac{d}{d p} \log (j(p)-j(q)) = \sum_{n=0}^{\infty} F_{0,n}(j(q))\, p^{n}$ and thus
\begin{align*}
\frac{1}{j(p)-j(q)} = p \cdot \exp \left( \sum_{n=1}^{\infty} F_{0,n}(j(q)) \frac{p^{n}}{n}  \right). 
\end{align*}
Note that this equation is equivalent to the denominator formula of the monster Lie algebra, which was discovered independently by Borcherds, Koike, Norton, and Zagier (see \cite{borcherds1992monstrous, borcherds2002what, asai1997zeros}): 
\begin{align*}
j(p) - j(q) = p^{-1} \prod_{m>0,\, n \in \mathbb{Z}} (1-p^{m}q^{n})^{c(m n)} ,
\end{align*}
where the exponents $c(n)$ are defined by $j(p)-744= \sum_{n\ge-1} c(n) p^{n}$. 
\begin{cor}
For any even integer $k$ and $n \in \mathbb{Z}_{\ge0}$, we have
\begin{align*}
(n+1) F_{2,n}(X) = \sum_{r=0}^{n} F_{2-k, n-r}(X) F_{k, r}(X).
\end{align*}
\end{cor}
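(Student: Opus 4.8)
The plan is to prove the Corollary by combining the two generating functions \eqref{eq:genfct2-k} and \eqref{eq:genfctk}, exploiting the fact that $F_{2,n}(X)$ is the Faber polynomial attached to the weight-$2$ case, for which the generating function involves the logarithmic derivative of $j(p)-j(q)$. First I would fix the even integer $k=12m+4\delta+6\varepsilon$ and note that the complementary weight is $2-k$. The key observation is that the product $F_{2-k,n-r}(X)F_{k,r}(X)$ summed over $r$ is precisely the coefficient extraction of a \emph{product} of two generating series, so I would multiply together the series for $F_{k,\ast}$ and for $F_{2-k,\ast}$.

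Concretely, the generating function \eqref{eq:genfctk} for weight $k$ gives $\sum_{r\ge0} F_{k,r}(j(q))\,p^{r-m} = \dfrac{E_{4}(p)^{2-\delta}E_{6}(p)^{1-\varepsilon}}{\Delta(p)^{m+1}(j(p)-j(q))}$, and the generating function \eqref{eq:genfct2-k} applied to the weight $2-k$ (so that the roles of $\delta,\varepsilon,m$ are replaced by their complementary values $2-\delta,1-\varepsilon,-m-1$, consistent with $2-k = 12(-m-1)+4(2-\delta)+6(1-\varepsilon)$) gives $\sum_{s\ge0} F_{2-k,s}(j(q))\,p^{\,s-m} = \dfrac{\Delta(p)^{m+1}}{E_{4}(p)^{2-\delta}E_{6}(p)^{1-\varepsilon}(j(p)-j(q))}$. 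The plan is to \emph{multiply} these two identities: the $E_{4},E_{6},\Delta$ factors cancel exactly, and the shift exponents $-m$ combine to $p^{-2m}$, leaving
\begin{align*}
\Bigl(\sum_{r\ge0} F_{k,r}(j(q))\,p^{r}\Bigr)\Bigl(\sum_{s\ge0} F_{2-k,s}(j(q))\,p^{s}\Bigr) = p^{2m}\cdot\frac{1}{(j(p)-j(q))^{2}}.
\end{align*}
The Cauchy product of the left-hand side has $\sum_{r=0}^{n} F_{2-k,n-r}(j(q))F_{k,r}(j(q))$ as the coefficient of $p^{n}$.

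It then remains to identify the right-hand side with $\sum_{n\ge0}(n+1)F_{2,n}(j(q))\,p^{n}$. For this I would invoke the weight-$2$ specialization of \eqref{eq:genfctk}, namely $k=2$ corresponding to $m=0,\delta=0,\varepsilon=1$ (so $2-k=0$ corresponds to $m=-1,\delta=2,\varepsilon=0$), together with the already-recorded identity $-p\tfrac{d}{dp}\log(j(p)-j(q)) = \sum_{n\ge0}F_{0,n}(j(q))\,p^{n}$. The cleanest route is to start from $\dfrac{1}{j(p)-j(q)} = \sum_{n\ge0} F_{0,n}(j(q))\,p^{n+1}$ (this is \eqref{eq:genfct2-k} with $m=0,\delta=\varepsilon=0$), differentiate $p\tfrac{d}{dp}$ of its reciprocal relation, and use $p\tfrac{d}{dp}j(p) = -E_{4}(p)^{2}E_{6}(p)/\Delta(p)$ to recognize that $\dfrac{p^{2m}}{(j(p)-j(q))^{2}}$ is, up to the correct power of $p$, the generating function of $(n+1)F_{2,n}$. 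I expect the main obstacle to be a careful bookkeeping of the exponent shifts and of the complementary parameters $(2-\delta,1-\varepsilon,-m-1)$, in particular checking that the prefactors $E_{4}^{2-\delta}E_{6}^{1-\varepsilon}\Delta^{-(m+1)}$ from the two series are genuine reciprocals and that the overall power of $p$ matches; once the factor identity $\dfrac{p^{2}}{(j(p)-j(q))^{2}} = \sum_{n\ge0}(n+1)F_{2,n}(j(q))\,p^{n}$ is established for the base case $m=0$, the general case follows by the same cancellation with no further analytic input.
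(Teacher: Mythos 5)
Your overall strategy is the paper's: multiply the two generating functions \eqref{eq:genfct2-k} and \eqref{eq:genfctk} and recognize the result as a derivative of $1/(j(p)-j)$. But there is a genuine error at the central step, and it is not mere bookkeeping. The generating function for $F_{2-k,\ast}$ is \eqref{eq:genfct2-k} with the \emph{original} parameters, namely $\sum_{s\ge0}F_{2-k,s}(j)\,p^{s+m+1}=E_{4}(p)^{\delta}E_{6}(p)^{\varepsilon}\Delta(p)^{m}/(j(p)-j)$, not the expression you wrote (yours differs from this by the factor $\Delta(p)/(E_{4}(p)^{2}E_{6}(p))$ and carries the wrong power of $p$; note also that applying \eqref{eq:genfct2-k} with the complementary parameters $(-m-1,2-\delta,1-\varepsilon)$ returns the generating function of $F_{k,\ast}$, not of $F_{2-k,\ast}$). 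Consequently the prefactors of the two series are \emph{not} reciprocals: their product is $E_{4}(p)^{2}E_{6}(p)/\Delta(p)$, and the exponents $s+m+1$ and $r-m$ sum to $n+1$, independently of $m$. The correct product identity is
\begin{align*}
\sum_{n=0}^{\infty}\Bigl(\sum_{r=0}^{n}F_{2-k,n-r}(j)F_{k,r}(j)\Bigr)p^{n+1}
=\frac{E_{4}(p)^{2}E_{6}(p)}{\Delta(p)\,(j(p)-j)^{2}},
\end{align*}
with no stray $p^{2m}$.

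The surviving factor $E_{4}(p)^{2}E_{6}(p)/\Delta(p)=-p\tfrac{d}{dp}j(p)$ is precisely what makes the corollary work: it converts $(j(p)-j)^{-2}$ into $p\tfrac{d}{dp}\tfrac{1}{j(p)-j}=\sum_{n\ge0}(n+1)F_{2,n}(j)p^{n+1}$, which is where the factor $n+1$ comes from. If the prefactors really cancelled as you claim, the right-hand side $p^{2m}/(j(p)-j)^{2}$ would instead be (a shift of) the square of $\sum_{n}F_{2,n}(j)p^{n+1}$, i.e.\ the generating function of the self-convolution $\sum_{r}F_{2,n-r}(j)F_{2,r}(j)$, which is not $(n+1)F_{2,n}(j)$. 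Your proposed base-case identity $p^{2}/(j(p)-j)^{2}=\sum_{n\ge0}(n+1)F_{2,n}(j)p^{n}$ is in fact false: the left-hand side is $p^{4}(1+O(p))$ while the right-hand side has constant term $1$. Once you restore the correct prefactors and exponents, your argument collapses onto the paper's proof.
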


\begin{proof}
Multiplying both sides of \eqref{eq:genfct2-k} and \eqref{eq:genfctk}, respectively, gives
\begin{align*}
&{}\sum_{n=0}^{\infty} \left(  \sum_{r=0}^{n} F_{2-12m-4\delta-6\varepsilon, n-r}(j) F_{12m+4\delta+6\varepsilon, r}(j) \right) p^{n+1} \\
&= \frac{E_{4}(p)^{2}E_{6}(p)}{\Delta(p) (j(p)-j)^{2}} = p \frac{d}{d p} \frac{1}{j(p)-j} .
\end{align*}
Since $1/(j(p)-j) = \sum_{n=0}^{\infty} F_{2,n}(j) p^{n+1}$ by \eqref{eq:genfct2-k}, the proof is completed by comparing the coefficients of $p^{n+1}$ on both sides. 
\end{proof}

\begin{thm} \label{thm:Fourieromega}
Write the even integer $k=12m+4\delta+6\varepsilon$ as above and recall the notation \eqref{eq:addsymAN}.
Let $\omega_{k,n}(r)$ and $\Omega_{2-k,n}(r)$ be the coefficients of the orthogonal polynomial expansion of the generalized Faber polynomials $F_{k,n}(X)$ and $F_{2-k,n}(X)$ by the Atkin-like polynomials $A_{r,4\delta+6\varepsilon}(X)$:
\begin{align}
F_{k,n}(X) = \sum_{r} \omega_{k,n}(r) A_{r,4\delta+6\varepsilon}(X), \quad F_{2-k,n}(X) = \sum_{r} \Omega_{2-k,n}(r) A_{r,4\delta+6\varepsilon}(X), \label{eq:AtkinexpofgenFaberpoly}
\end{align}
where the subscript $r$ runs an integer that satisfies $0 \le \deg A_{r,4\delta+6\varepsilon}(X) \le n$. 
Then, for an integer $\ell$ such that $12\ell+4\delta+6\varepsilon \in 2\mathbb{Z}_{\ge1} \backslash \{4\}$, we have
\begin{align}
&{} \sum_{n=0}^{\infty} \omega_{k,n}(\ell ) \, p^{n-m} = \frac{E_{4}(p)^{2-2\delta } E_{6}(p)^{1-2\varepsilon }}{\Delta(p)^{\ell+m+1}} \, G_{12\ell+4\delta+6\varepsilon}(p) \quad (\text{weight $2-k$}) ,  \label{eq:omegakasFouriercoeff} \\
&{} \sum_{n=0}^{\infty} \Omega_{2-k,n}(\ell ) \, p^{n+m+1} = \Delta(p)^{m-\ell} G_{12\ell+4\delta+6\varepsilon}(p) \quad (\text{weight $k$}) .  \label{eq:omega2-kasFouriercoeff}
\end{align}
Particularly, we have $G_{12\ell+4\delta+6\varepsilon}(p)=\sum_{n=0}^{\infty} \Omega_{2-(12\ell +4\delta +6\varepsilon ),n}(\ell )\, p^{n+\ell+1}$.
\end{thm}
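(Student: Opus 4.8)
The plan is to isolate each expansion coefficient by pairing the relevant generalized Faber polynomial against a single Atkin-like polynomial under the functional $\mathcal{L}$, then to sum over the degree and recognise the resulting series through the generating functions \eqref{eq:genfct2-k}, \eqref{eq:genfctk} and the representation \eqref{eq:exqmfasimageofAtkinL} of the normalized extremal quasimodular forms. First I would fix $\ell$ and exploit the orthogonality relation \eqref{eq:orthogonalityrelation}. Multiplying the expansion \eqref{eq:AtkinexpofgenFaberpoly} of $F_{2-k,n}(X)$ by $j^{1-\lfloor \delta/2 \rfloor}(j-1728)^{1-\varepsilon} A_{\ell,4\delta+6\varepsilon}(j)$ and applying $\mathcal{L}$ annihilates every term except $r=\ell$, giving
\begin{align*}
\Omega_{2-k,n}(\ell) = \frac{(-1)^{1-\varepsilon}}{N_{\ell,4\delta+6\varepsilon}} \, \mathcal{L}\!\left( j^{1-\lfloor \delta/2 \rfloor}(j-1728)^{1-\varepsilon} A_{\ell,4\delta+6\varepsilon}(j)\, F_{2-k,n}(j)\right),
\end{align*}
and likewise for $\omega_{k,n}(\ell)$ with $F_{k,n}$ in place of $F_{2-k,n}$.

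Next I would multiply by $p^{n+m+1}$ (respectively by $p^{n-m}$) and sum over $n\ge0$, interchanging the formal $p$-summation with $\mathcal{L}$, which acts only on the variable $j$. The inner sum is then exactly a generating function of generalized Faber polynomials: by \eqref{eq:genfct2-k} the series $\sum_{n\ge0} F_{2-k,n}(j)\,p^{n+m+1}$ collapses to $E_{4}(p)^{\delta}E_{6}(p)^{\varepsilon}\Delta(p)^{m}/(j(p)-j)$, and by \eqref{eq:genfctk} the series $\sum_{n\ge0} F_{k,n}(j)\,p^{n-m}$ collapses to $E_{4}(p)^{2-\delta}E_{6}(p)^{1-\varepsilon}/(\Delta(p)^{m+1}(j(p)-j))$.

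Finally I would apply \eqref{eq:exqmfasimageofAtkinL}, the decisive point being that the Atkin-like index forced by the orthogonal projection is $\ell$, not the weight parameter $m$ of the Faber polynomial. Since $E_{4}(p),E_{6}(p),\Delta(p)$ are constants for $\mathcal{L}$, I would rearrange the $p$-dependent factors so as to reconstruct inside $\mathcal{L}$ precisely the combination $E_{4}(p)^{\delta}E_{6}(p)^{\varepsilon}\Delta(p)^{\ell} A_{\ell,4\delta+6\varepsilon}(j)/(j(p)-j)$ demanded by \eqref{eq:exqmfasimageofAtkinL} at index $\ell$; this turns the functional value into $(-1)^{1-\varepsilon} N_{\ell,4\delta+6\varepsilon}\, G_{12\ell+4\delta+6\varepsilon}(p)$. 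The normalizers cancel and the two signs multiply to $1$, while the surplus powers of $\Delta(p)$ left over from the mismatch between the power $m$ supplied by the generating function and the power $\ell$ required by \eqref{eq:exqmfasimageofAtkinL} assemble into the stated prefactors $\Delta(p)^{m-\ell}$ in \eqref{eq:omega2-kasFouriercoeff} and $E_{4}(p)^{2-2\delta}E_{6}(p)^{1-2\varepsilon}/\Delta(p)^{\ell+m+1}$ in \eqref{eq:omegakasFouriercoeff}. The concluding ``Particularly'' assertion is then the specialization $m=\ell$ (equivalently $k=12\ell+4\delta+6\varepsilon$) of \eqref{eq:omega2-kasFouriercoeff}, for which $\Delta(p)^{m-\ell}=1$.

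I expect the main difficulty to be purely bookkeeping rather than conceptual: tracking the powers of $E_{4}(p),E_{6}(p),\Delta(p)$ through the index shift from $m$ to $\ell$, verifying that the two factors $(-1)^{1-\varepsilon}$ cancel, and confirming that the six cases $4\delta+6\varepsilon\in\{0,4,6,8,10,14\}$ (with the conventions \eqref{eq:addsymAN} and the side condition $12\ell+4\delta+6\varepsilon\in 2\mathbb{Z}_{\ge1}\backslash\{4\}$) are all covered. The term-by-term action of $\mathcal{L}$ on the generating series is legitimate because $\mathcal{L}$ is defined coefficientwise as a residue in $q$ while the series are formal in the independent variable $p$ with polynomial-in-$j$ coefficients, so no analytic convergence issue arises.
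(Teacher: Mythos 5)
Your proposal is correct and follows essentially the same route as the paper's proof: extract $\omega_{k,n}(\ell)$ and $\Omega_{2-k,n}(\ell)$ via the orthogonality relation \eqref{eq:orthogonalityrelation}, sum against powers of $p$ so the generating functions \eqref{eq:genfct2-k} and \eqref{eq:genfctk} collapse the inner sum, and then rescale the $p$-dependent factors to invoke \eqref{eq:exqmfasimageofAtkinL} at index $\ell$, whereupon the signs and normalizers cancel. The bookkeeping of the prefactors and the specialization $m=\ell$ for the final assertion all check out.
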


\begin{proof}
Combining the first equation of \eqref{eq:AtkinexpofgenFaberpoly} and the orthogonality relation \eqref{eq:orthogonalityrelation}, we have
\begin{align*}
&{} \mathcal{L} \left( j^{1-\lfloor \delta/2 \rfloor} (j-1728)^{1-\varepsilon} A_{\ell, 4\delta+6\varepsilon}(j) F_{k,\, n}(j) \right) \\
&= \sum_{r} \omega_{k,n}(r) \mathcal{L} \left( j^{1-\lfloor \delta/2 \rfloor} (j-1728)^{1-\varepsilon} A_{\ell, 4\delta+6\varepsilon}(j) A_{r, 4\delta+6\varepsilon}(j) \right) \\
&= (-1)^{1-\varepsilon} N_{\ell, 4\delta+6\varepsilon}\, \omega_{k,n}(\ell ) .
\end{align*}
On the other hand, by using the generating function \eqref{eq:genfctk} of the generalized Faber polynomial and equation \eqref{eq:exqmfasimageofAtkinL}, we obtain
\begin{align*}
&{} \sum_{n=0}^{\infty} \mathcal{L} \left( j^{1-\lfloor \delta/2 \rfloor} (j-1728)^{1-\varepsilon} A_{\ell, 4\delta+6\varepsilon}(j) F_{k,\, n}(j) \right) p^{n-m}  \\
&= \mathcal{L} \left( j^{1-\lfloor \delta/2 \rfloor} (j-1728)^{1-\varepsilon} A_{\ell, 4\delta+6\varepsilon}(j) \sum_{n=0}^{\infty} F_{k,\, n}(j) p^{n-m} \right) \\
&= \mathcal{L} \left( j^{1-\lfloor \delta/2 \rfloor} (j-1728)^{1-\varepsilon} A_{\ell, 4\delta+6\varepsilon}(j) \frac{E_{4}(p)^{2-\delta}E_{6}(p)^{1-\varepsilon}}{\Delta(p)^{m+1}(j(p)-j)} \right) \\
&= \frac{E_{4}(p)^{2-2\delta}E_{6}(p)^{1-2\varepsilon}}{\Delta(p)^{\ell+m+1}} \; \mathcal{L} \left( j^{1-\lfloor \delta/2 \rfloor} (j-1728)^{1-\varepsilon} A_{\ell, 4\delta+6\varepsilon}(j) \frac{E_{4}(p)^{\delta}E_{6}(p)^{\varepsilon}\Delta(p)^{\ell}}{j(p)-j} \right) \\
&=\frac{E_{4}(p)^{2-2\delta}E_{6}(p)^{1-2\varepsilon}}{\Delta(p)^{\ell+m+1}} \; (-1)^{1-\varepsilon} N_{\ell, 4\delta+6\varepsilon}\, G_{12\ell+4\delta+6\varepsilon}^{(1)}(p).
\end{align*}
This gives \eqref{eq:omegakasFouriercoeff}, and \eqref{eq:omega2-kasFouriercoeff} is also obtained from a similar calculation. 
\end{proof}

\begin{rem}
Specializing to $k=2=12 \cdot (-1) +4 \cdot 2 +6 \cdot 1$ and $\ell=-1$ in \eqref{eq:omegakasFouriercoeff}, we obtain the Fourier coefficients of the moment-generating function \eqref{eq:Stieltjes function} as $\omega_{2,n}(-1)$:
\begin{align*}
&{} \sum_{n=0}^{\infty} \omega_{2,n}(-1)\, p^{n+1} = \frac{E_{4}(p)^{-2}E_{6}(p)^{-1}}{\Delta(p)^{-1}} G_{2}(p) \\
&= \frac{E_{2}(p)E_{4}(p)}{j(p)E_{6}(p)} = p -24 p^{2} +196812 p^{3} +38262208 p^{4} +O(p^{5}) .
\end{align*}
Here, $\omega_{2,n}(-1)$ is the ``constant term'' in the expansion of  $F_{2,n}(X)$ by the Atkin polynomials.
\begin{align*}
F_{2,n}(X) &= \sum_{r} \omega_{2,n}(r) A_{r,14}(X) = \sum_{r} \omega_{2,n}(r) A_{r+1,2}(X) \\
&= \omega_{2,n}(-1)A_{0,2}(X) + \omega_{2,n}(0)A_{1,2}(X) + \cdots + \omega_{2,n}(n-1)A_{n,2}(X).
\end{align*}
\end{rem}

\begin{ex}
Case of $k=14$, i.e., $(m,\delta,\varepsilon)=(0,2,1)$ and $\ell \ge  -1$. Then the table of $\omega_{14,n}(\ell )$ is as follows.
\begin{table}[H]
\caption{The first few values of coefficients $\omega_{14,n}(\ell )$.}
\label{tab:o14nl}
\begin{center}
\begin{tabular}{|c|ccccc|}\hline
$n \backslash \ell $ & $-1$ & 0 & 1 & 2 & 3 \\ \hline
0 & 1 & 0 & 0 & 0 & 0 \\ 
1 & 0 & 1 & 0 & 0 & 0 \\
2 & 196560 & 176 & 1 & 0 & 0 \\
3 & 42981120 & 208302 & $\frac{1536}{5}$ & 1 & 0 \\[2pt]
4 & 41292342000 & 78071008 & $\frac{1176672}{5}$ & 432 & 1 \\[2pt] \hline
\end{tabular}
\end{center}
\end{table}
The rows of Table \ref{tab:o14nl} are obtained from an orthogonal polynomial expansion, e.g., 
\begin{align*}
F_{14,0}(X) &= 1 =A_{0,2}(X), \\
F_{14,1}(X) &= X-720 = 0 \cdot A_{0,2}(X) +A_{1,2}(X) , \\
F_{14,2}(X) &= X^{2}-1464 X+339120 = 196560A_{0,2}(X) +176 A_{1,2}(X) + A_{2,2}(X) .
\end{align*}
The columns of Table \ref{tab:o14nl} are the Fourier coefficients of certain forms closely related to the normalized extremal quasimodular forms, from \eqref{eq:omegakasFouriercoeff} in Theorem \ref{thm:Fourieromega}.
\begin{align*}
&{}  \frac{G_{2}(p)}{E_{4}(p)^{2} E_{6}(p)} = 1+196560 p^2+42981120 p^3+41292342000 p^4+O\left(p^5\right) , \\
&{} \frac{G_{14}(p)}{E_{4}(p)^{2} E_{6}(p) \Delta(p)} = p+176 p^2+208302 p^3+78071008 p^4+O\left(p^5\right) , \\
&{} \frac{G_{26}(p)}{E_{4}(p)^{2} E_{6}(p) \Delta(p)^{2}} = p^2+\frac{1536 p^3}{5}+\frac{1176672 p^4}{5}+\frac{531453184 p^5}{5} +O\left(p^6\right) .
\end{align*}
Note that since we have already obtained the hypergeometric expressions of $G_{k}$ in Proposition~\ref{prop:Gto2F1}, we can calculate a certain $\omega_{k,n}(\ell )$. For example, the subdiagonal elements of Table \ref{tab:o14nl} are  
\begin{align*}
\omega_{14,n}(n-2) &= \frac{48 (n-1) (5 n+1)}{2 n-1} \quad (n\ge1)
\end{align*}
and the subsubdiagonal elements are
\begin{align*}
\omega_{14,n}(n-3) &= \frac{36 \left(400 n^4-2210 n^3+14931 n^2-29408 n+15832\right)}{(n-1) (2 n-3)} \quad (n\ge2).
\end{align*}
\end{ex}

\begin{cor}
For any even integer $k=12m+4\delta+6\varepsilon$, $n \in \mathbb{Z}_{\ge0}$ and integers $\ell$ and $\ell'$ such that $12\ell+4\delta+6\varepsilon, 12\ell' +14-4\delta -6\varepsilon \in 2\mathbb{Z}_{\ge1} \backslash \{4\}$, we have
\begin{align}
&{} \sum_{d=0}^{n} \omega_{k,d}(\ell ) F_{2-k,n-d}(X) = \sum_{d=0}^{n} \Omega_{2-k,n-d}(\ell ) F_{k,d}(X) , \label{eq:oFOF} \\
&{} \sum_{d=0}^{n} \omega_{k,d}(\ell ) \omega_{2-k,n-d}(\ell' ) = \sum_{d=0}^{n} \Omega_{2-k,n-d}(\ell ) \Omega_{k,d}(\ell' ).  \label{eq:ooOO}
\end{align}
\end{cor}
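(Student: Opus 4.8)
The plan is to prove both identities by recognizing each side as the coefficient of $p^{n+1}$ in a Cauchy product of two generating functions, and then checking that the two products coincide as formal power series in $p$. The only inputs are the generating functions of the expansion coefficients from Theorem \ref{thm:Fourieromega} (equations \eqref{eq:omegakasFouriercoeff} and \eqref{eq:omega2-kasFouriercoeff}) together with the Duke--Jenkins generating functions \eqref{eq:genfct2-k} and \eqref{eq:genfctk} of the generalized Faber polynomials.

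For \eqref{eq:oFOF}, I would multiply the series $\sum_d \omega_{k,d}(\ell)p^{d-m}$ of \eqref{eq:omegakasFouriercoeff} by the Faber series $\sum_e F_{2-k,e}(X)p^{e+m+1}$ of \eqref{eq:genfct2-k}; the exponents are arranged so that the Cauchy product is exactly $\sum_n \bigl(\sum_{d=0}^n \omega_{k,d}(\ell)F_{2-k,n-d}(X)\bigr)p^{n+1}$, i.e. the left side carried by $p^{n+1}$. Multiplying the closed-form right-hand sides, the factors collapse to $E_4(p)^{2-\delta}E_6(p)^{1-\varepsilon}G_{12\ell+4\delta+6\varepsilon}(p)/\bigl(\Delta(p)^{\ell+1}(j(p)-j)\bigr)$. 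The symmetric computation for the right side of \eqref{eq:oFOF} — multiplying $\sum_e \Omega_{2-k,e}(\ell)p^{e+m+1}$ from \eqref{eq:omega2-kasFouriercoeff} by $\sum_d F_{k,d}(X)p^{d-m}$ from \eqref{eq:genfctk} — yields the identical closed form, since the $\Delta$-exponents $m-\ell$ and $-(m+1)$ again sum to $-(\ell+1)$. Comparing coefficients of $p^{n+1}$ gives \eqref{eq:oFOF}.

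For \eqref{eq:ooOO} the key preliminary is to pin down the weight-$(2-k)$ parameters. Writing $2-k=12m'+4\delta'+6\varepsilon'$ in the standard normalization, one verifies across all six residue classes that $(m',\delta',\varepsilon')=(-m-1,\,2-\delta,\,1-\varepsilon)$, whence $4\delta'+6\varepsilon'=14-4\delta-6\varepsilon$; this is precisely the index of the Atkin-like polynomials attached to $\omega_{2-k}$ and the source of the hypothesis $12\ell'+14-4\delta-6\varepsilon\in 2\mathbb{Z}_{\ge1}\backslash\{4\}$. Feeding these primed parameters into Theorem \ref{thm:Fourieromega} produces $\sum_e \omega_{2-k,e}(\ell')p^{e+m+1}=E_4(p)^{-(2-2\delta)}E_6(p)^{-(1-2\varepsilon)}\Delta(p)^{-(\ell'-m)}G_{12\ell'+14-4\delta-6\varepsilon}(p)$ and $\sum_d \Omega_{k,d}(\ell')p^{d-m}=\Delta(p)^{-(m+1+\ell')}G_{12\ell'+14-4\delta-6\varepsilon}(p)$. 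The left side of \eqref{eq:ooOO} is then the $p^{n+1}$-coefficient of $\bigl(\sum_d\omega_{k,d}(\ell)p^{d-m}\bigr)\bigl(\sum_e\omega_{2-k,e}(\ell')p^{e+m+1}\bigr)$, and the right side is the $p^{n+1}$-coefficient of $\bigl(\sum_e\Omega_{2-k,e}(\ell)p^{e+m+1}\bigr)\bigl(\sum_d\Omega_{k,d}(\ell')p^{d-m}\bigr)$. In both products the $E_4$ and $E_6$ powers cancel against their reciprocals and the $\Delta$-exponents collapse to $-(\ell+\ell'+1)$, so each product equals $G_{12\ell+4\delta+6\varepsilon}(p)\,G_{12\ell'+14-4\delta-6\varepsilon}(p)/\Delta(p)^{\ell+\ell'+1}$; comparing coefficients proves \eqref{eq:ooOO}.

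The main obstacle is the bookkeeping in the second identity: one must keep straight that $\omega_{2-k}$ is expanded in the weight-$(2-k)$ polynomials $A_{r,14-4\delta-6\varepsilon}$ while $\Omega_{2-k}$ uses the weight-$k$ polynomials $A_{r,4\delta+6\varepsilon}$, and track the shifts $p^{\pm m}$ and $p^{\pm(m+1)}$ so that every Cauchy product lands on $p^{n+1}$. Once the parameter dictionary $(m',\delta',\varepsilon')=(-m-1,2-\delta,1-\varepsilon)$ is in hand, the cancellation of the Eisenstein factors is automatic and the remaining manipulation is routine.
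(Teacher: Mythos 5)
Your proposal is correct and follows essentially the same route as the paper: both identities are obtained by pairing the generating functions of Theorem \ref{thm:Fourieromega} with the Duke--Jenkins generating functions \eqref{eq:genfct2-k}--\eqref{eq:genfctk} (using the parameter dictionary $(m',\delta',\varepsilon')=(-m-1,2-\delta,1-\varepsilon)$ for weight $2-k$), checking that the closed forms of the two Cauchy products coincide, and comparing coefficients of $p^{n+1}$. As a minor aside, your exponent $\Delta(p)^{\ell+\ell'+1}$ in the common closed form for \eqref{eq:ooOO} is the correct one (the paper's displayed $\Delta(p)^{\ell+\ell'-1}$ is a typo, harmless to the argument since both products carry the same factor).
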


\begin{proof}
From equations \eqref{eq:omegakasFouriercoeff} and \eqref{eq:omega2-kasFouriercoeff} we find that $G_{12\ell+4\delta+6\varepsilon}(p)$ has the following two expressions:
\begin{align*}
G_{12\ell+4\delta+6\varepsilon}(p) &= E_{4}(p)^{2\delta -2} E_{6}(p)^{2\varepsilon -1} \Delta(p)^{m+\ell+1} \sum_{n=0}^{\infty} \omega_{k,n}(\ell ) \, p^{n-m}, \\
G_{12\ell+4\delta+6\varepsilon}(p) &= \Delta(p)^{\ell -m} \sum_{n=0}^{\infty} \Omega_{2-k,n}(\ell ) \, p^{n+m+1},
\end{align*}
and hence
\begin{align*}
 E_{4}(p)^{\delta} E_{6}(p)^{\varepsilon } \Delta(p)^{m} \sum_{n=0}^{\infty} \omega_{k}(n,\ell) \, p^{n-m} = \frac{E_{4}(p)^{2-\delta} E_{6}(p)^{1-\varepsilon }}{\Delta(p)^{m+1}} \sum_{n=0}^{\infty} \Omega_{2-k}(n,\ell) \, p^{n+m+1}  .
\end{align*}
Dividing both sides of this equation by $(j(p)-j)$ and using equations \eqref{eq:genfct2-k} and \eqref{eq:genfctk}, we have
\begin{align*}
&{}\left( \sum_{n=0}^{\infty} F_{2-k, n}(j)\, p^{n+m+1}  \right) \left( \sum_{n=0}^{\infty} \omega_{k,n}(\ell ) \, p^{n-m} \right) \\
&= \left( \sum_{n=0}^{\infty} F_{k, n}(j)\, p^{n-m} \right) \left( \sum_{n=0}^{\infty} \Omega_{2-k,n}(\ell ) \, p^{n+m+1} \right)  .
\end{align*}
The desired result \eqref{eq:oFOF} can be obtained by comparing the coefficients of $p^{n}$ on both sides of the above equation.

Since $2-k=12(-m-1)+4(2-\delta)+6(1-\varepsilon)$, equations \eqref{eq:omegakasFouriercoeff} and \eqref{eq:omega2-kasFouriercoeff} can be rewritten as follows:
\begin{align*}
&{} \sum_{n=0}^{\infty} \omega_{2-k,n}(\ell' ) \, p^{n+m+1} = \frac{E_{4}(p)^{2\delta -2} E_{6}(p)^{2\varepsilon -1}}{\Delta(p)^{\ell' -m}} \, G_{12\ell' +14-4\delta -6\varepsilon}(p), \\
&{} \sum_{n=0}^{\infty} \Omega_{k,n}(\ell' ) \, p^{n-m} = \Delta(p)^{-m-1-\ell'} G_{12\ell' +14-4\delta -6\varepsilon}(p),
\end{align*}
where the integer $\ell'$ satisfies $12\ell' +14-4\delta -6\varepsilon \in 2\mathbb{Z}_{\ge1} \backslash \{4\}$.
Thus we have
\begin{align*}
&{} \sum_{n=0}^{\infty} \left( \sum_{d=0}^{n} \omega_{k,d}(\ell ) \omega_{2-k,n-d}(\ell' ) \right) p^{n+1} = \sum_{n=0}^{\infty} \left( \sum_{d=0}^{n} \Omega_{2-k,n-d}(\ell ) \Omega_{k,d}(\ell' ) \right) p^{n+1} \\
&= \frac{G_{12\ell +4\delta +6\varepsilon }(p) G_{12\ell' +14-4\delta -6\varepsilon}(p)}{\Delta(p)^{\ell +\ell' -1}}. 
\end{align*}
The desired result \eqref{eq:ooOO} can be obtained by comparing the coefficients of $p^{n+1}$ of the above equation, which are also Fourier coefficients of certain forms.
\end{proof}


\section{Atkin inner product as a weight zero counterpart of the Petersson inner product} \label{sec:Atkin-Petersson}

In \cite[Thm.~9.2]{borcherds1998automorphic} Borcherds showed\footnote{The case $n=0$ in Theorem 9.2 coincides with the definition \eqref{def:Atkininnerprod} of the Atkin inner product, but this fact is not mentioned in \cite{borcherds1998automorphic}. For the proof, Borcherds refers to \cite[p.~102]{lerche1989lattices} by physicists.} that the Atkin inner product can be expressed as the following integral (here $f(\tau), g(\tau) \in \mathbb{C}[j(\tau)]$):
\begin{align}
(f(\tau),g(\tau)) = \frac{1}{\mathrm{Vol}(\Gamma \backslash \mathfrak{H})} \lim_{T \rightarrow \infty} \int_{\Omega_{T}} f(\tau) g(\tau) \frac{d x d y}{y^{2}} \quad (\tau = x + i y), \label{eq:Atkin-Peterssonprod}
\end{align}
where $\Omega_{T} = \{ \tau \in \mathfrak{H} \mid  |\tau| \ge 1, |x|\le \tfrac{1}{2}, y \le T \}$ is the truncation of the fundamental domain $\Gamma \backslash \mathfrak{H}$ and $\mathrm{Vol}(\Gamma \backslash \mathfrak{H}) = \int_{\Gamma \backslash \mathfrak{H}} y^{-2} dxdy =\tfrac{\pi}{3} $ is the hyperbolic volume of $\Gamma \backslash \mathfrak{H}$. 
From this expression, the Atkin inner product can be regarded as a weight zero counterpart of the Petersson inner product $\langle \cdot , \cdot \rangle$ defined by
\begin{align*}
\langle  f(\tau) , g(\tau) \rangle = \int_{\Gamma \backslash \mathfrak{H}} f(\tau) \overline{g(\tau)}\, y^{k} \frac{d x d y}{y^{2}}  \quad (\tau = x + i y).
\end{align*} 
Here, $f(\tau), g(\tau) \in M_{k}$ and either $f(\tau)$ or $g(\tau)$ belongs to $S_{k}$. While the Petersson inner product is Hermitian, the equation \eqref{eq:Atkin-Peterssonprod} shows that the Atkin inner product is not: In particular, $\langle  f , g \rangle = \overline{ \langle g , f \rangle }$  for $fg \in S_{2k}$ but not necessarily $(f,g) = \overline{(g,f)}$ for general $f,g \in \mathbb{C}[j(\tau)]$. 

We define the $n$-th Hecke operator $T_{n}\; (n\in \mathbb{Z}_{\ge1})$ on a modular form $f(\tau )=\sum_{m} c(m) q^{m}$ of weight $k$ on $\Gamma$ as
\begin{align*}
( f |_{k} T_{n})(\tau ) = n^{k-1} \sum_{\substack{ad=n, \; d>0 \\ 0 \le b \le d-1}} d^{-k} f \left( \frac{a\tau +b}{d} \right) = \sum_{m} \left( \sum_{0<d | (m,n)} d^{k-1} c \left(\frac{m n}{d^{2}} \right) \right) q^{m}.
\end{align*}
Note that the Hecke operators $T_{n}$ are self-adjoint with respect to these inner products, i.e., $\langle f|_{k}T_{n} , g \rangle = \langle f , g|_{k}T_{n} \rangle$ and $(f|_{0}T_{n} , g)=(f , g|_{0}T_{n})$ hold. 
For a proof of the latter, see \cite[Thm.~2, \S 5]{kaneko1998supersingular}, although the definition of the Hecke operator is slightly different from ours, it is easy to modify.

The $n$-th Poincar\'{e} series of even weight $k \ge 4$ on $\Gamma$ is defined by 
\begin{align}
P_{k,n}(\tau ) = \frac{1}{2} \sum_{\substack{c,d \in \mathbb{Z} \\ (c,d)=1}} (c\tau +d)^{-k} \exp \left( 2 \pi i n \, \frac{a \tau +b}{c \tau +d} \right), \label{eq:Poincareseries}
\end{align}
where the integers $a$ and $b$ are chosen so that $\bigl( \begin{smallmatrix} a & b \\ c & d \end{smallmatrix} \bigr) \in \Gamma$. 
In particular, when $n=0$, we obtain $P_{k,0}(\tau) = E_{k}(\tau)$, which is the usual Eisenstein series. 
It is well-known that, for $n \in \mathbb{Z}_{\ge 1}$, the space of cusp forms $S_{k}$ is spanned by $\{ P_{k,n}(\tau) \}_{1\le n \le [k/12]}$, also $P_{k,n}(\tau) = n^{-k+1} (P_{k,1} |_{k} T_{n})(\tau)$ holds. 

The right-hand side of \eqref{eq:Poincareseries} does not converge for $k=0$, but the following result \cite[Eq.~(18)]{knopp1990rademacher} by Knopp is known, which is a rewrite of Rademacher's result  \cite[Eq.~(4.1)]{rademacher1939absolutej}:
\begin{align*}
j(\tau ) -732 = \frac{1}{2} \lim_{K \rightarrow \infty} \sum_{|c| \le K} \sum_{\substack{|d| \le K \\ (c,d)=1}} \left\{ \exp \left( -2 \pi i \, \frac{a \tau +b}{c \tau +d} \right) - \exp \left( - 2 \pi i \frac{a}{c} \right) \right\},
\end{align*}
where the integers $a$ and $b$ are chosen as in \eqref{eq:Poincareseries}, and if $c=0$, we assume that the term $\exp \left( - 2 \pi i \tfrac{a}{c} \right)=0$. Knopp states that this representation allows $j(\tau )$ to be considered as a parabolic Poincar\'{e} series of weight $0$ on $\Gamma$.

For convenience, we now consider the function $H_{1}(j(\tau ))=j(\tau )-720$, which differs from $j(\tau )-732$ by an additive constant\footnote{If the constant term $c_{0}$ is defined by (7.72) in \cite{rademacher1938fourierj}, the right-hand side of (7.71) in the same paper is equal to $j(\tau)-720=H_{1}(j(\tau))=A_{1}(j(\tau))$. See the discussion in \cite[\S 8]{rademacher1938fourierj}.}, to be the Poincar\'{e} series of weight 0 on $\Gamma$ (we set $H_{0}(j)\coloneqq1$).
Furthermore, following known result for $P_{k,n}(\tau)$ mentioned above, we also define
\begin{align*}
H_{n}(j(\tau)) \coloneqq n (H_{1}|_{0}T_{n})(\tau ) = n (j(\tau)-720) |_{0} T_{n} \quad (n\ge1). 
\end{align*}
In \cite[Problem~2]{kaneko1999zeros}, Kaneko presented the following problem with this ``Poincar\'{e}  series'' $H_{n}(j)$ and the original Atkin polynomial $A_{\ell}(j)$: 
\begin{quote}
\textit{What is the value of the Atkin inner product $(H_{n}(j),A_{\ell }(j))$? In particular, is the value always non-negative?}
\end{quote}
We cannot answer the question of non-negativity, but the value of $(H_{n}(j),A_{\ell }(j))$ can be interpreted as an integral Fourier coefficient of some forms that are closely related to the extremal quasimodular forms of depth 1 on $\Gamma$.
\begin{prop} \label{prop:AtkininnerprodHandA}
For any integer $\ell \ge 0$, we have
\begin{align*}
\sum_{n=0}^{\infty} (H_{n}(j), A_{\ell}(j)) p^{n} =
\begin{cases}
1 & \text{ if } \ell=0, \\
N_{\ell,2} \, G_{12\ell+2}(p) \Delta (p)^{-\ell} = N_{\ell,2}\, p^{\ell }(1+O(p)) \in \mathbb{Z}[\![p]\!] & \text{ if } \ell \ge 1.
\end{cases}
\end{align*}
\end{prop}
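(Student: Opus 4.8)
The plan is to compute the generating function $\sum_{n\ge0}H_n(j(\tau))\,p^n$ explicitly as a function of $p$ (with polynomial-in-$j$ coefficients), and then to pair it against $A_\ell(j)=A_{\ell,2}(j)$ by letting the linear functional $\mathcal{L}$ act on the variable $j$, exactly in the spirit of the proof of Theorem~\ref{prop:imageofAtkinL}. Since $(f,g)=\mathcal{L}(fg)$ and the $p$-dependent Eisenstein factors are constants with respect to $j$, this reduces the whole statement to the single evaluation of $\mathcal{L}\!\left(A_{\ell,2}(j)/(j(p)-j)\right)$ together with the orthogonality relation $\mathcal{L}(A_{\ell,2}(j))=0$ for $\ell\ge1$.

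First I would identify the $H_n$ with the weight-zero generalized Faber polynomials $F_{0,n}$. Writing $F_{0,1}(j)=j-744$, a direct computation with the weight-$0$ Hecke operator gives $1|_0T_n=\sigma_{-1}(n)$ and $n\,(F_{0,1}|_0T_n)=F_{0,n}(j)$ (the latter because $n(F_{0,1}|_0T_n)$ is a weight-$0$ weakly holomorphic form with principal part $q^{-n}$ and no constant term, hence equals $f_{0,n}$ by the uniqueness in \eqref{eq:basefkl}). Since $H_1=j-720=F_{0,1}(j)+24$, this yields $H_n(j)=F_{0,n}(j)+24\sigma_1(n)$ for $n\ge1$ and $H_0=1=F_{0,0}(j)$. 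Feeding in the generating function \eqref{eq:genfctk} with $k=0$, namely $\sum_{n\ge0}F_{0,n}(j)\,p^n=E_4(p)^2E_6(p)/(\Delta(p)(j(p)-j))$, together with $24\sum_{n\ge1}\sigma_1(n)p^n=1-E_2(p)$, then produces the closed form
\[
\sum_{n\ge0}H_n(j)\,p^n=\frac{E_4(p)^2E_6(p)}{\Delta(p)\,(j(p)-j)}+1-E_2(p).
\]

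Applying $\mathcal{L}(\,\cdot\,A_\ell(j))$ termwise and pulling the $j$-independent factors out gives $\sum_n(H_n(j),A_\ell(j))\,p^n=\frac{E_4(p)^2E_6(p)}{\Delta(p)}\,\mathcal{L}\!\left(\frac{A_\ell(j)}{j(p)-j}\right)+(1-E_2(p))\,\mathcal{L}(A_\ell(j))$. For $\ell\ge1$ the last term drops by orthogonality, and the evaluation $\mathcal{L}\!\left(A_{\ell,2}(j)/(j(p)-j)\right)=N_{\ell,2}\,G_{12\ell+2}(p)/(E_4(p)^2E_6(p)\Delta(p)^{\ell-1})$---the $r=2$ analogue of the computation in the proof of Theorem~\ref{prop:imageofAtkinL}(ii), obtained from part~(i), the Stieltjes function \eqref{eq:Stieltjes function}, and \eqref{eq:G12m2AB} using $j=E_4^3/\Delta$---collapses everything to $N_{\ell,2}\,G_{12\ell+2}(p)\,\Delta(p)^{-\ell}$. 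For $\ell=0$ one has $A_0=1$, so the two terms are $\frac{E_4^2E_6}{\Delta}\cdot\frac{E_2E_4}{jE_6}$ (from \eqref{eq:Stieltjes function}) and $1-E_2$, which sum to $E_2+1-E_2=1$ after using $j=E_4^3/\Delta$. The leading behaviour $G_{12\ell+2}=p^{2\ell}(1+O(p))$ read off from Proposition~\ref{prop:Gto2F1} gives $N_{\ell,2}G_{12\ell+2}\Delta^{-\ell}=N_{\ell,2}\,p^{\ell}(1+O(p))$, and membership in $\mathbb{Z}[\![p]\!]$ follows at once since $H_n(j),A_\ell(j)\in\mathbb{Z}[j]$ and $\mathcal{L}(j^k)\in\mathbb{Z}$.

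I expect the main obstacle to be the first step: pinning down the generating function of $H_n$, and in particular getting the additive correction $1-E_2(p)$ exactly right. This requires the explicit weight-$0$ Hecke computation ($1|_0T_n=\sigma_{-1}(n)$ and $n(F_{0,1}|_0T_n)=F_{0,n}$) and care with the shift between the normalizing constants $720$ and $744$. Once that generating function is in place, the rest is a routine assembly, because the genuinely nontrivial Stieltjes-type evaluation $\mathcal{L}(A_{\ell,2}(j)/(j(p)-j))$ is essentially already available from Theorem~\ref{prop:imageofAtkinL}, and the two cases $\ell=0$ and $\ell\ge1$ are handled uniformly by the same splitting.
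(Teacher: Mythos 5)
Your main computation is correct and is, at bottom, the same argument as the paper's: both proofs rest on the identification $H_{n}(j)=F_{0,n}(j)+24\sigma_{1}(n)$, the generating function \eqref{eq:genfctk} for the weight-zero Faber polynomials, the orthogonality $\mathcal{L}(A_{\ell}(j))=0$, and the Stieltjes-type evaluation of $\mathcal{L}\left(A_{\ell,2}(j)/(j(p)-j)\right)$. The only organizational difference is that the paper first expands $F_{0,n}(j)=\sum_{r}\Omega_{0,n}(r)A_{r,14}(j)$ and then quotes \eqref{eq:omega2-kasFouriercoeff} of Theorem \ref{thm:Fourieromega} for the generating series of the coefficients $\Omega_{0,n}(\ell-1)$, whereas you inline that proof by pairing the closed-form generating function of $\sum_{n}H_{n}(j)p^{n}$ directly against $A_{\ell}(j)$. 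Your handling of the $\ell=0$ case (the cancellation $E_{2}+1-E_{2}=1$ using $j=E_{4}^{3}/\Delta$ and $\mathcal{L}(1)=1$) and of the leading order $G_{12\ell+2}=p^{2\ell}(1+O(p))$ are both fine.

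There is, however, one genuine flaw: the final integrality claim. You assert that $(H_{n}(j),A_{\ell}(j))\in\mathbb{Z}$ ``follows at once since $H_{n}(j),A_{\ell}(j)\in\mathbb{Z}[j]$ and $\mathcal{L}(j^{k})\in\mathbb{Z}$.'' But the Atkin polynomials do \emph{not} in general lie in $\mathbb{Z}[j]$ --- they lie only in $\mathbb{Q}[j]$ (e.g.\ $A_{3}(X)=X^{3}-\tfrac{12576}{5}X^{2}+\cdots$; the non-integrality is also visible in Table \ref{tab:o14nl}, where entries such as $\tfrac{1536}{5}$ occur). So this one-line argument does not establish membership in $\mathbb{Z}[\![p]\!]$. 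The paper instead deduces integrality from the nontrivial external fact that $N_{\ell,2}\,G_{12\ell+2}(p)\in p^{2\ell}\mathbb{Z}[\![p]\!]$ (Theorem 3 of \cite{nakaya2023determination}), combined with $\Delta(p)^{-\ell}\in p^{-\ell}\mathbb{Z}[\![p]\!]$. You would need to import that result (or prove something equivalent) to close this last step; everything else in your argument stands.
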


\begin{proof}
Since the generalized Faber polynomial $F_{0,n}(j)$ satisfies $F_{0,n}(j(\tau))= n (j(\tau)-744) |_{0} T_{n}$ for $n\ge1$ (see \cite{asai1997zeros}), $H_{n}(j)$ and $F_{0,n}(j)$ differ only in the constant term:
\begin{align*}
H_{n}(j)= F_{0,n}(j) + 24 \sum_{\substack{ad=n, \; d>0 \\ 0 \le b \le d-1}} 1 = F_{0,n}(j) + 24 \sigma_{1}(n) \quad (n\ge1).
\end{align*} 
By setting $k=2=12\cdot (-1)+4\cdot2+6\cdot1$ in the second equation of  \eqref{eq:AtkinexpofgenFaberpoly}, we have $F_{0,n}(j) = \sum_{r} \Omega_{0,n}(r) A_{r,14}(j)$ and hence 
\begin{align*}
(H_{n}(j), A_{\ell}(j)) &= (  F_{0,n}(j) + 24 \sigma_{1}(n), A_{\ell-1,14}(j)) \\
&= N_{\ell-1,14} \, \Omega_{0,n}(\ell -1) + 24 \sigma_{1}(n) \, (1, A_{\ell -1,14}(j)) \quad \text{(by \eqref{eq:orthogonalityrelation})}\\
&= N_{\ell, 2} \, \Omega_{0,n}(\ell -1) + 24 \sigma_{1}(n) \, (1, A_{\ell}(j)) \quad (n\ge1).
\end{align*}
(From the notation $A_{m,14}(X)=A_{m+1,2}(X)$, this is equal to the original Atkin polynomial $A_{m+1}(X)$.) On the other hand, by setting $k=2=12\cdot (-1)+4\cdot2+6\cdot1$ in \eqref{eq:omega2-kasFouriercoeff} of Theorem \ref{thm:Fourieromega}, we have $\sum_{n=0}^{\infty} \Omega_{0,n}(\ell) p^{n} = \Delta (p)^{-1-\ell} G_{12\ell +14}(p)$. Replacing $\ell$ in this equation with $\ell-1$, we obtain
\begin{align*}
\sum_{n=0}^{\infty} (H_{n}(j), A_{\ell}(j)) p^{n} &= (1,A_{\ell}(j)) + \sum_{n=1}^{\infty} \{ N_{\ell, 2} \, \Omega_{0,n}(\ell -1) + 24 \sigma_{1}(n) \, (1, A_{\ell}(j))  \} p^{n} \\
&= N_{\ell,2}\, G_{12\ell +2}(p) \Delta (p)^{-\ell} +24 (1,A_{\ell}(j)) \sum_{n=1}^{\infty} \sigma_{1}(n) p^{n}.
\end{align*}
Since the orthogonality relation for Atkin polynomials, the second term of  the right-hand side of the above equation vanishes for $\ell \ge1$, and when $\ell=0$, only the constant term survives. 
Finally, since $N_{\ell ,2}G_{12\ell +2}(p) \in p^{2\ell }\mathbb{Z}[\![p]\!]$ from Theorem~3 in \cite{nakaya2023determination} and $\Delta (p)^{-\ell } \in p^{-\ell } \mathbb{Z}[\![p]\!]$ is obvious, so the desired integrality is concluded.
\end{proof}

\begin{rem}
\begin{enumerate}[(i)]
\item Using the self-adjointness of the Hecke operator, the claim for $\ell=0$ can also be proved as follows: for $n\ge1$ we have
\begin{align*}
(H_{n}(j),1) = (n(j-720)|_{0}T_{n},1) = (j-720, n|_{0}T_{n}) = \sigma_{1}(n) (j-720,1) =0.
\end{align*}
\item From Proposition \ref{prop:Gto2F1}, the form $\sum_{n=0}^{\infty} (H_{n}(j), A_{\ell}(j)) p^{n}$ for $\ell \ge1$ can be expressed using hypergeometric series.  
\end{enumerate}
\end{rem}

We define the regularized Petersson inner product, which differs from equation \eqref{eq:Atkin-Peterssonprod} only in the complex conjugate symbol and the constant multiple, as follows (here $f(\tau), g(\tau) \in \mathbb{C}[j(\tau)]$): 
\begin{align*}
\langle f(\tau) , g(\tau) \rangle_{\mathrm{reg}} = \lim_{T \rightarrow \infty} \int_{\Omega_{T}} f(\tau) \overline{g(\tau)} \frac{d x d y}{y^{2}} \quad (\tau = x + i y).
\end{align*}
In \cite[Thm.~1]{duke2016regularized}, Duke, Imamo\={g}lu and T\'{o}th showed that an analytic formula for the value of the inner product $\langle H_{m}(j(\tau)) , H_{n}(j(\tau)) \rangle_{\mathrm{reg}}$ for different $m,n \in \mathbb{Z}_{\ge0}$ using the Kloosterman sum and the Bessel functions. (The correspondence between their symbol and ours is $ f_{m}=H_{m}$ except for $f_{0}=0$.) For the case of $m=n$, see \cite[Thm.~1.2]{bringmann2017regularized}. 
Note that this regularized Petersson inner product and the Atkin inner product \eqref{eq:Atkin-Peterssonprod} look very similar, but the values of the two inner products $\langle H_{m}(j(\tau)) , H_{n}(j(\tau)) \rangle_{\mathrm{reg}}$ and $\tfrac{\pi}{3}( H_{m}(j) , H_{n}(j))$ are different because $\overline{H_{n}(j(\tau))}$ and $H_{n}(j(\tau))$ are different functions as complex functions. 
For example, $\langle H_{2}, H_{1} \rangle_{\mathrm{reg}} =366.765$ in  the numerical example in \cite{duke2016regularized}, but $\tfrac{\pi}{3}(H_{2},H_{1})=6.25745 \dots \times 10^{7}$ in our calculation. 
We also note that $\langle f g , 1 \rangle_{\mathrm{reg}}=\tfrac{\pi}{3}(f,g)$ holds, and from this representation, the regularized Petersson inner product can be regarded as a generalization of the Atkin inner product.

Inspired by the inner product $\langle H_{m}(j(\tau)) , H_{n}(j(\tau)) \rangle_{\mathrm{reg}}$, let us consider the generating series of the Atkin inner product $( H_{m}(j) , H_{n}(j))$.
\begin{cor}\label{cor:HH}
\begin{enumerate}[(i)]
\item For any integer $\ell \ge1$, the following equation holds. 
\begin{align*}
\sum_{n=1}^{\infty} (H_{n}(j), H_{\ell}(j)) p^{n} = \sum_{r=0}^{\ell -1} \Omega_{0,\ell }(r) \, N_{r+1,2} \, G_{12r+14}(p) \Delta(p)^{-r-1}. 
\end{align*}
\item The following equation holds as a formal power series in $\mathbb{Z}[\![p,q]\!]$.
\begin{align*}
\sum_{n, \ell  \ge 1} (H_{n}(j), H_{\ell}(j)) p^{n}q^{\ell} = \sum_{r=0}^{\infty} N_{r+1,2}\, G_{12r+14}(p) G_{12r+14}(q) \bigl( \Delta(p) \Delta(q) \bigr)^{-r-1} .
\end{align*}
\end{enumerate}
\end{cor}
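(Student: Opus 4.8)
The plan is to prove part (i) by expanding $H_{\ell}(j)$ in the basis of Atkin polynomials and invoking Proposition~\ref{prop:AtkininnerprodHandA}, and then to deduce part (ii) by summing the part~(i) identity against $q^{\ell}$ and recognizing the resulting inner sum through \eqref{eq:omega2-kasFouriercoeff}.

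For part (i), I would begin from the relation $H_{\ell}(j) = F_{0,\ell}(j) + 24\sigma_{1}(\ell)$ for $\ell \ge 1$ established inside the proof of Proposition~\ref{prop:AtkininnerprodHandA}, together with the expansion $F_{0,\ell}(j) = \sum_{r} \Omega_{0,\ell}(r) A_{r+1,2}(j)$ from the second formula of \eqref{eq:AtkinexpofgenFaberpoly} with $k=2$ (using $A_{r,14} = A_{r+1,2}$). Since $\deg A_{r+1,2} = r+1$, the index runs from $r=-1$ (the constant $A_{0,2}=1$) up to $r=\ell-1$, so after collecting constants $H_{\ell}(j)$ is a linear combination of $A_{0,2}(j)=1$ and of $A_{r+1,2}(j)$ for $0 \le r \le \ell-1$. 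Pairing with $H_{n}(j)$ under the Atkin inner product and using bilinearity, for $n \ge 1$ the entire constant contribution drops out because $(H_{n}(j),1)=0$ (the $\ell=0$ case of Proposition~\ref{prop:AtkininnerprodHandA}); notably one never needs the explicit value of the coefficient of $A_{0,2}$. What survives is $\sum_{r=0}^{\ell-1} \Omega_{0,\ell}(r)\,(H_{n}(j),A_{r+1,2}(j))$, and feeding in the generating function $\sum_{n}(H_{n}(j),A_{r+1}(j))\,p^{n} = N_{r+1,2}\,G_{12r+14}(p)\Delta(p)^{-r-1}$ from Proposition~\ref{prop:AtkininnerprodHandA} (whose $n=0$ term vanishes by orthogonality, so the $n\ge1$ sum captures it fully) gives part (i) at once.

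For part (ii), I multiply the part~(i) identity by $q^{\ell}$ and sum over $\ell \ge 1$. Interchanging the two summations, which is legitimate for formal power series, regroups the terms by $r \ge 0$, leaving for each $r$ the inner sum $\sum_{\ell \ge r+1} \Omega_{0,\ell}(r)\,q^{\ell}$. Because $F_{0,\ell}$ has degree $\ell$, one has $\Omega_{0,\ell}(r)=0$ whenever $\ell \le r$, so this equals $\sum_{\ell \ge 0} \Omega_{0,\ell}(r)\,q^{\ell}$, which by \eqref{eq:omega2-kasFouriercoeff} specialized to $k=2$, $m=-1$ is exactly $\Delta(q)^{-r-1}G_{12r+14}(q)$. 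Substituting yields the symmetric double sum $\sum_{r\ge0} N_{r+1,2}\,G_{12r+14}(p)G_{12r+14}(q)\,(\Delta(p)\Delta(q))^{-r-1}$ claimed in part (ii). The integrality in $\mathbb{Z}[\![p,q]\!]$ follows because each coefficient $(H_{n}(j),H_{\ell}(j))$ is an integer, as the moments $\mathcal{L}(j^{n})$ are integers and $H_{n}(j) \in \mathbb{Z}[j]$.

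The argument is largely bookkeeping; the point requiring genuine care is the treatment of the constant term of $H_{\ell}(j)$ and the vanishing $(H_{n}(j),1)=0$ for $n\ge1$, which is precisely what removes the $A_{0,2}$ contribution and produces the clean range $0 \le r \le \ell-1$. I expect no real obstacle beyond keeping the index shifts $A_{r,14}=A_{r+1,2}$ consistent. A reassuring consistency check is that the final expression is symmetric under $p \leftrightarrow q$, matching the symmetry $(H_{n}(j),H_{\ell}(j)) = (H_{\ell}(j),H_{n}(j))$ of the Atkin inner product as a bilinear form (even though it is not Hermitian).
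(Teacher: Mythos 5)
Your proposal is correct and follows essentially the same route as the paper's own proof: expand $H_{\ell}(j)=F_{0,\ell}(j)+24\sigma_{1}(\ell)$ in Atkin polynomials, kill the constant contributions via $(H_{n}(j),1)=0$, apply Proposition~\ref{prop:AtkininnerprodHandA} for (i), and then sum against $q^{\ell}$ and use \eqref{eq:omega2-kasFouriercoeff} to identify $\sum_{\ell}\Omega_{0,\ell}(r)q^{\ell}$ for (ii). The index bookkeeping ($A_{r,14}=A_{r+1,2}$, the vanishing $n=0$ term) is handled exactly as in the paper.
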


\begin{proof}
By expanding the polynomial $H_{\ell}(j)=F_{0,\ell}(j) +24 \sigma_{1}(\ell)$ with the Atkin polynomial, as in Proposition \ref{prop:AtkininnerprodHandA}, we have
\begin{align*}
&{} \sum_{n=1}^{\infty} (H_{n}(j), H_{\ell}(j)) p^{n} = \sum_{n=1}^{\infty} \left\{ \left( H_{n}(j), \sum_{r=-1}^{\ell -1} \Omega_{0,\ell }(r) A_{r,14}(j) \right) + 24 \sigma_{1}(\ell) (H_{n}(j),1)  \right\} p^{n} \\
&= \sum_{n=1}^{\infty} \left\{ \sum_{r=-1}^{\ell -1} \Omega_{0,\ell }(r) (H_{n}(j) , A_{r+1}(j) ) \right\} p^{n} = \sum_{r=0}^{\ell -1} \Omega_{0,\ell }(r) \sum_{n=1}^{\infty} (H_{n}(j), A_{r+1}(j) ) p^{n} .
\end{align*}
(Note that $(H_{n}(j),1)=0$ for $n \ge 1$.) Then, by applying Proposition \ref{prop:AtkininnerprodHandA}, we obtain the desired result of (i). Furthermore, we have 
\begin{align*}
&{} \sum_{n, \ell  \ge 1} (H_{n}(j), H_{\ell}(j)) p^{n}q^{\ell} = \sum_{\ell \ge 1} \left\{  \sum_{r=0}^{\ell -1} \Omega_{0,\ell }(r) \, N_{r+1,2} \, G_{12r+14}(p) \Delta(p)^{-r-1} \right\} q^{\ell} \\
&= \sum_{r=0}^{\infty} N_{r+1,2} \, G_{12r+14}(p) \Delta(p)^{-r-1} \sum_{\ell \ge 1} \Omega_{0,\ell }(r) \, q^{\ell}.
\end{align*}
With the help of Theorem \ref{thm:Fourieromega}, the sum for $\ell$ in the above equation is equal to $G_{12r+14}(q) \Delta(q)^{-r-1}$, and thus we obtain the second assertion.
\end{proof}

\begin{ex}
The first few coefficients $\Omega_{0,\ell }(r)$ are given in the following table. We emphasize again that the columns of this table are the Fourier coefficients of some forms from Theorem \ref{thm:Fourieromega}. 
\begin{table}[H]
\caption{The first few values of coefficients $\Omega_{0,\ell}(r)$.}
\begin{center}
\begin{tabular}{|c|ccccc|}\hline
$\ell \backslash r$ & 0 & 1 & 2 & 3 & 4 \\ \hline
1 & 1 & 0 & 0 & 0 & 0 \\ 
2 & 152 & 1 & 0 & 0 & 0 \\
3 & 7446 & $\frac{1416}{5}$ & 1 & 0 & 0 \\[2pt]
4 & 200752 & $\frac{156648}{5}$ & 408 & 1 & 0 \\[2pt]
5 & 3685870 & $\frac{9867424}{5}$ & 70479 & $\frac{1592}{3}$ & 1 \\[2pt] \hline
\end{tabular}
\end{center}
\end{table}
By setting $\ell=1,2$ in (i) of Corollary \ref{cor:HH}, we have 
\begin{align*}
&{} \sum_{n=1}^{\infty} (H_{n}(j), H_{1}(j)) p^{n} = 1 \cdot \frac{N_{1,2}\, G_{14}(p)}{\Delta(p)} \\
&= 393120 p+59754240 p^{2} +2927171520 p^{3} +78919626240 p^{4} +O\left(p^{5}\right), \\
&{} \sum_{n=1}^{\infty} (H_{n}(j), H_{2}(j)) p^{n} = 152 \cdot \frac{N_{1,2}\, G_{14}(p)}{\Delta(p)} + 1 \cdot \frac{N_{2,2}\, G_{26}(p)}{\Delta(p)^{2}} \\
&= 59754240 p +78920412480 p^{2} +20222985968640 p^{3}  +O\left(p^{4} \right) .
\end{align*}
\end{ex}

The generating series of values of the Atkin inner product of generalized Faber polynomials of weight $0$ can be obtained by the same calculation as in Proposition~\ref{prop:AtkininnerprodHandA} and Corollary~\ref{cor:HH} (recall that $N_{0,2}\coloneqq1$):
\begin{align}
\sum_{n, \ell  \ge 0} (F_{0,n}(j), F_{0,\ell}(j)) p^{n}q^{\ell} = \sum_{r=0}^{\infty} N_{r,2}\, G_{12r+2}(p) G_{12r+2}(q) \bigl( \Delta(p) \Delta(q) \bigr)^{-r} .  \label{eq:FFpqNGG}
\end{align}
In fact, the left-hand side of the above equation has a simple closed form.
\begin{thm} \label{thm:AtkinFaberFaber}
The following equation holds as a formal power series in $\mathbb{Z}[\![p,q]\!]$.
\begin{align*}
\sum_{n, \ell  \ge 0} (F_{0,n}(j), F_{0,\ell}(j)) p^{n}q^{\ell} = \frac{\psi (p,q) - \psi (q,p)}{j(p)^{-1} - j(q)^{-1}}, \quad \psi (p,q) \coloneqq  \frac{E_{2}(p) E_{6}(q)}{j(p) E_{4}(q)}. 
\end{align*}
\end{thm}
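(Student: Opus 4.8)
The plan is to evaluate the generating series directly through the linear functional $\mathcal{L}$, bypassing the spectral expansion \eqref{eq:FFpqNGG}. Since $(f,g)=\mathcal{L}(fg)$ and $\mathcal{L}$ acts only on the variable $j$, I would first move $\mathcal{L}$ outside the two summations,
\begin{align*}
\sum_{n,\ell \ge 0} (F_{0,n}(j), F_{0,\ell}(j))\, p^{n} q^{\ell} = \mathcal{L}\left( \Big(\sum_{n\ge0} F_{0,n}(j)\, p^{n}\Big) \Big(\sum_{\ell\ge0} F_{0,\ell}(j)\, q^{\ell}\Big) \right),
\end{align*}
which is legitimate term by term because the coefficient of $p^{n}q^{\ell}$ on the right is the single polynomial $F_{0,n}(j)F_{0,\ell}(j)$. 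Specializing the generating function \eqref{eq:genfct2-k} to $(m,\delta,\varepsilon)=(-1,2,1)$, so that $2-(12m+4\delta+6\varepsilon)=0$ and $p^{n+m+1}=p^{n}$, yields $\sum_{n\ge0}F_{0,n}(j)\,p^{n}=E_{4}(p)^{2}E_{6}(p)\Delta(p)^{-1}/(j(p)-j)$, and likewise in $q$. Thus the right-hand side is $\mathcal{L}$ applied to the product of two simple poles in $j$.

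The key step is the partial fraction identity
\begin{align*}
\frac{1}{(j(p)-j)(j(q)-j)} = \frac{1}{j(p)-j(q)} \left( \frac{1}{j(q)-j} - \frac{1}{j(p)-j} \right),
\end{align*}
which reduces everything to the Stieltjes function \eqref{eq:Stieltjes function}. Pulling the $j$-free prefactor $E_{4}(p)^{2}E_{6}(p)E_{4}(q)^{2}E_{6}(q)(\Delta(p)\Delta(q))^{-1}$ out of $\mathcal{L}$ and applying $\mathcal{L}(1/(j(p)-j))=E_{2}(p)E_{4}(p)/(j(p)E_{6}(p))$ together with its $q$-analogue to the two remaining terms, I would obtain a closed expression.

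To finish I would simplify with $j=E_{4}^{3}/\Delta$: in each of the two resulting terms one factor $E_{4}(\cdot)^{3}/\Delta(\cdot)$ cancels a factor $j(\cdot)$ in a denominator, leaving
\begin{align*}
\frac{1}{j(p)-j(q)} \left( \frac{E_{2}(q)E_{4}(p)^{2}E_{6}(p)}{\Delta(p)} - \frac{E_{2}(p)E_{4}(q)^{2}E_{6}(q)}{\Delta(q)} \right).
\end{align*}
Recognizing this as the asserted right-hand side is then a short symmetric check: using $1/(j(p)^{-1}-j(q)^{-1})=-j(p)j(q)/(j(p)-j(q))$ and $j(\cdot)/E_{4}(\cdot)=E_{4}(\cdot)^{2}/\Delta(\cdot)$, one verifies that $(\psi(p,q)-\psi(q,p))/(j(p)^{-1}-j(q)^{-1})$ equals the same expression.

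The main obstacle is not the algebra but the bookkeeping that makes the identity valid as a formal power series in $\mathbb{Z}[\![p,q]\!]$. Both $\psi(p,q)-\psi(q,p)$ and $j(p)^{-1}-j(q)^{-1}$ are antisymmetric under $p\leftrightarrow q$, hence divisible by $p-q$; since $j(p)^{-1}-j(q)^{-1}=(p-q)(1-744(p+q)+\cdots)$ has a unit as its cofactor, the quotient is a well-defined element of $\mathbb{Q}[\![p,q]\!]$, and its integrality is read off from the left-hand side, whose coefficients are integers because those of $j$ and $E_{2}$ are (cf. the remark after \eqref{eq:momentL}). One should also work throughout with $p,q$ of sufficiently large imaginary part, so that the expansion $1/(j(p)-j)=\sum_{n}j^{n}j(p)^{-n-1}$ underlying \eqref{eq:Stieltjes function} is valid, and then pass to the formal identity.
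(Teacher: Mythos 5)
Your proposal is correct and follows essentially the same route as the paper's direct proof of Theorem \ref{thm:AtkinFaberFaber}: multiply the weight-zero generating function $\sum_{n}F_{0,n}(j)p^{n}=E_{4}(p)^{2}E_{6}(p)\Delta(p)^{-1}/(j(p)-j)$ by its $q$-counterpart, split by partial fractions, apply $\mathcal{L}$ via the Stieltjes function \eqref{eq:Stieltjes function}, and simplify using $j=E_{4}^{3}/\Delta$. The only cosmetic difference is that you invoke \eqref{eq:genfct2-k} with $(m,\delta,\varepsilon)=(-1,2,1)$ rather than \eqref{eq:genfctk} with $(0,0,0)$, which yields the identical series, and you spell out the integrality and antisymmetry bookkeeping that the paper records in the remark following the theorem statement.
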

The integrality follows from the fact that both $\psi (p,q) - \psi (q,p)$ and $j(p)^{-1} - j(q)^{-1}$ have expansions of the form $(p-q)(1+ \cdots ) \in \mathbb{Z}[\![p,q]\!]$. 
Alternatively, we know from equation \eqref{eq:genfctk} that $F_{0,n}(j) \in \mathbb{Z}[j]$, and furthermore, since $\mathcal{L}(j^{m}) \in \mathbb{Z}$, we see that $(F_{0,n}(j), F_{0,\ell}(j)) \in \mathbb{Z}$.

We now give two different proofs of Theorem \ref{thm:AtkinFaberFaber}. The first proof is simple and straightforward, and the second proof shows that the right-hand side of Theorem~\ref{thm:AtkinFaberFaber} is equal to the right-hand side of \eqref{eq:FFpqNGG}.

\begin{proof}[Direct proof of Theorem \ref{thm:AtkinFaberFaber}]
By setting $m=\delta =\varepsilon =0$ in \eqref{eq:genfctk}, we have
\begin{align*}
\sum_{n=0}^{\infty} F_{0,n}(j)p^{n} = \frac{j(p)E_{6}(p)}{E_{4}(p)(j(p)-j)}
\end{align*}
and so
\begin{align*}
&{} \sum_{n,\ell \ge0} F_{0,n}(j) F_{0,\ell }(j)p^{n}q^{\ell} = \frac{j(p)j(q)E_{6}(p)E_{6}(q)}{E_{4}(p)E_{4}(q)(j(p)-j)(j(q)-j)} \\
&= \frac{j(p)j(q)E_{6}(p)E_{6}(q)}{E_{4}(p)E_{4}(q)(j(q)-j(p))} \left( \frac{1}{j(p)-j} - \frac{1}{j(q)-j} \right).
\end{align*}
By acting on both sides of this equation with the linear functional $\mathcal{L}$ corresponding to the Atkin inner product (recall \eqref{eq:momentL}), we obtain
\begin{align*}
&{}\sum_{n,\ell \ge0} \mathcal{L}(F_{0,n}(j) F_{0,\ell }(j))p^{n}q^{\ell} \\
&= \frac{j(p)j(q)E_{6}(p)E_{6}(q)}{E_{4}(p)E_{4}(q)(j(q)-j(p))} \left\{ \mathcal{L}\left( \frac{1}{j(p)-j}\right) - \mathcal{L}\left(\frac{1}{j(q)-j} \right) \right\}.
\end{align*}
From equation \eqref{eq:Stieltjes function}, the moment-generating function can be rewritten in the form using the Eisenstein series and the elliptic modular invariant to obtain the desired representation. 
\end{proof}

Performing the same calculations  for \eqref{eq:genfctk} as in the proof above, we obtain the following result for the generating series of $(F_{k,n}(j), F_{k,\ell}(j))$.
\begin{cor}
Write the even integer $k=12m+4\delta+6\varepsilon$ as in Section \ref{sec:genFaberpoly}. Then the following equation holds as a formal power series in $\mathbb{Z}[\![p,q]\!]$.
\begin{align*}
\sum_{n, \ell  \ge 0} (F_{k,n}(j), F_{k,\ell}(j)) p^{n-m}q^{\ell -m} = \frac{\sum_{n, \ell  \ge 0} (F_{0,n}(j), F_{0,\ell}(j)) p^{n}q^{\ell}}{(E_{4}(p)E_{4}(q))^{\delta} (E_{6}(p)E_{6}(q))^{\varepsilon } (\Delta (p)\Delta (q))^{m}} .
\end{align*}
\end{cor}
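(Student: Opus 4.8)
The plan is to repeat the mechanism of the direct proof of Theorem~\ref{thm:AtkinFaberFaber}, changing only the weight-$0$ generating function to its weight-$k$ analogue \eqref{eq:genfctk}. First I would start from
\[
\sum_{n=0}^{\infty} F_{k,n}(j)\, p^{n-m} = \frac{E_{4}(p)^{2-\delta}E_{6}(p)^{1-\varepsilon}}{\Delta(p)^{m+1}(j(p)-j)}
\]
and multiply this series in $p$ by the same series in an independent variable $q$, obtaining
\[
\sum_{n,\ell \ge 0} F_{k,n}(j) F_{k,\ell}(j)\, p^{n-m} q^{\ell-m} = \frac{E_{4}(p)^{2-\delta}E_{6}(p)^{1-\varepsilon}}{\Delta(p)^{m+1}} \cdot \frac{E_{4}(q)^{2-\delta}E_{6}(q)^{1-\varepsilon}}{\Delta(q)^{m+1}} \cdot \frac{1}{(j(p)-j)(j(q)-j)}.
\]

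Next I would split the rational factor by partial fractions in $j$,
\[
\frac{1}{(j(p)-j)(j(q)-j)} = \frac{1}{j(q)-j(p)}\left( \frac{1}{j(p)-j} - \frac{1}{j(q)-j} \right),
\]
and apply the linear functional $\mathcal{L}$, which acts only on the variable $j$ (recall \eqref{eq:momentL} and $(F_{k,n}(j),F_{k,\ell}(j)) = \mathcal{L}(F_{k,n}(j)F_{k,\ell}(j))$). Setting
\[
\Phi(p,q) \coloneqq \frac{1}{j(q)-j(p)}\left( \mathcal{L}\left( \frac{1}{j(p)-j} \right) - \mathcal{L}\left( \frac{1}{j(q)-j} \right) \right),
\]
this produces
\[
\sum_{n,\ell \ge 0} (F_{k,n}(j), F_{k,\ell}(j))\, p^{n-m} q^{\ell-m} = \frac{E_{4}(p)^{2-\delta}E_{6}(p)^{1-\varepsilon}}{\Delta(p)^{m+1}} \cdot \frac{E_{4}(q)^{2-\delta}E_{6}(q)^{1-\varepsilon}}{\Delta(q)^{m+1}} \cdot \Phi(p,q).
\]

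The key observation is that $\Phi(p,q)$ is exactly the factor already isolated in the direct proof of Theorem~\ref{thm:AtkinFaberFaber}: specializing the last display to $k=0$ (so $m=\delta=\varepsilon=0$) gives
\[
\sum_{n,\ell \ge 0} (F_{0,n}(j), F_{0,\ell}(j))\, p^{n} q^{\ell} = \frac{E_{4}(p)^{2}E_{6}(p)}{\Delta(p)} \cdot \frac{E_{4}(q)^{2}E_{6}(q)}{\Delta(q)} \cdot \Phi(p,q).
\]
Since $\Phi(p,q)$ is independent of $k$, I would divide the general-$k$ display by this $k=0$ display; the factor $\Phi(p,q)$ cancels, and the surviving prefactor is
\[
E_{4}(p)^{-\delta}E_{6}(p)^{-\varepsilon}\Delta(p)^{-m} \cdot E_{4}(q)^{-\delta}E_{6}(q)^{-\varepsilon}\Delta(q)^{-m} = \frac{1}{(E_{4}(p)E_{4}(q))^{\delta}(E_{6}(p)E_{6}(q))^{\varepsilon}(\Delta(p)\Delta(q))^{m}},
\]
which is precisely the claimed identity.

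There is no deep obstacle here; the entire content is the cancellation of $\Phi(p,q)$, so the ``hard part'' is really just recognizing that all of the $k$-dependence sits in the elementary prefactor. The points needing care are the exponent bookkeeping $E_{4}(p)^{2-\delta}/E_{4}(p)^{2}=E_{4}(p)^{-\delta}$, $E_{6}(p)^{1-\varepsilon}/E_{6}(p)^{1}=E_{6}(p)^{-\varepsilon}$, $\Delta(p)^{-(m+1)}/\Delta(p)^{-1}=\Delta(p)^{-m}$ (and likewise in $q$), and the formal justification for interchanging $\mathcal{L}$ with the double sum, which is legitimate because $\mathcal{L}$ operates coefficientwise on powers of $j$ while $j(p),j(q)$ and all Eisenstein and discriminant factors in $p,q$ are scalars with respect to $\mathcal{L}$. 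Finally, when $m>0$ the factor $(\Delta(p)\Delta(q))^{-m}$ contributes the pole $p^{-m}q^{-m}$ on both sides, so the identity is most precisely read as an equality of Laurent series; integrality of the coefficients follows as in Theorem~\ref{thm:AtkinFaberFaber} from $F_{k,n}(j)\in\mathbb{Z}[j]$ together with $\mathcal{L}(j^{s})\in\mathbb{Z}$.
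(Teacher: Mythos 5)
Your proposal is correct and is exactly the argument the paper intends: the paper proves this corollary by remarking that one performs ``the same calculations for \eqref{eq:genfctk} as in the proof above'' (i.e.\ the direct proof of Theorem~\ref{thm:AtkinFaberFaber}), which is precisely your route of multiplying the two weight-$k$ generating series, splitting by partial fractions, applying $\mathcal{L}$, and cancelling the $k$-independent factor against the $k=0$ case. The exponent bookkeeping and the remark about reading the identity as a Laurent series when $m>0$ are both sound.
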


To give the second ``hypergeometric" proof of Theorem \ref{thm:AtkinFaberFaber}, we introduce Rogers' results on the continued fraction expansion of a formal power series \cite[\S 5]{rogers1907representation}. For the sake of brevity, we define some symbols here. 
For a formal power series $h(z)=\sum_{n=0}^{\infty} a_{n} z^{n}$, we define the formal Borel transform $B(h)$ and the formal Laplace transform $L(h)$ of $h(z)$ as follows:
\begin{align*}
B(h)(\xi) = \sum_{n=0}^{\infty} a_{n} \frac{\xi^{n}}{n!}, \quad L(h)(s) = \int_{0}^{\infty} h(z) e^{-s z} d z = \sum_{n=0}^{\infty} a_{n} \frac{n!}{s^{n+1}}.
\end{align*}
The formal Borel transform is essentially equivalent to the formal inverse Laplace transform, more precisely, $s L(B(h))(s) = h\left(s^{-1}\right)$ holds.

\begin{prop}[Rogers]
We assume that the formal power series $h(x)=\sum_{n=0}^{\infty} a_{n} x^{n}$ can be expressed in at least the first of the following two continued fractions (here we assume that the coefficients $a_{n}, \alpha_{n}, \beta_{n}$ and $e_{n}$ do not depend on $x$):
\begin{align*}
\cfrac{a_{0}}{1-\alpha_{1}x-
\cfrac{\beta_{1}x^{2}}{1-\alpha_{2}x-
\cfrac{\beta_{2}x^{2}}{1-\alpha_{3}x- \cdots }}}, \quad  
\cfrac{a_{0}}{1-
\cfrac{e_{1}x}{1-
\cfrac{e_{2}x}{1- \cdots }}} .
\end{align*}
If the above two expansions are possible, then we have $\alpha_{1} = e_{1}, \, \alpha_{n} = e_{2n-2}+e_{2n-1}\, (n\ge2)$ and $\beta_{n}=e_{2n-1}e_{2n}\, (n\ge1)$. 
(We omit the relation that the coefficients $a_{n}$ and $e_{n}$ satisfy. See \cite[\S 1]{rogers1907representation} for more details.) 
Using these numbers $\alpha_{n}$ and $\beta_{n}$, we define the sequence of formal power series $\{ \phi_{n}(x) \}_{n\ge0}$ and $\{h_{r}(x)\}_{r\ge1}$ as follows\footnote{Considering the consistency with the continued fraction (4) in \cite[\S 5]{rogers1907representation}, this recurrence formula is correct, and the sign of $\beta_{n}$ in equation (3) in the same section is incorrect.}.
\begin{align*}
&{} \phi_{0}(x) = h(x), \quad (1-\alpha_{1} x) \phi_{0}(x) = a_{0} + \beta_{1} x^{2} \phi_{1}(x), \\
&{} \phi_{n-2}(x) = (1-\alpha_{n} x) \phi_{n-1}(x) - \beta_{n} x^{2} \phi_{n}(x) \; (n\ge2), \quad h_{r}(x) = B(t^{r}\phi_{r}(t))(x).
\end{align*}
Then the following ``addition formula'' holds:
\begin{align}
B(h)(x+y) = A_{0} B(h)(x)B(h)(y) + \sum_{r=1}^{\infty} A_{r} h_{r}(x)h_{r}(y),  \label{eq:additionformula}
\end{align}
where $A_{0}=1/a_{0}$ and $A_{r+1}=\beta_{r+1}A_{r} \, (r\ge0)$\footnote{By comparing equations (2) and (3) (with the sign of $\beta_{n}$ changed to minus) in \cite[\S 5]{rogers1907representation}, we have $\frac{d}{d x} f_{n} =  f_{n-1}(x) + \alpha_{n+1} f_{n}(x) + \beta_{n+1} f_{n+1}(x)$ and hence $A_{n+1}=\beta_{n+1}A_{n}$ is correct, not $A_{n+1}=\beta_{n}A_{n}$.}. 
Conversely, if the addition formula is known, that is, if we know all of $B(h)(x), A_{r}$, and $h_{r}(x)$, we can convert the corresponding formal power series $h(x)$ into a continued fraction. 
\end{prop}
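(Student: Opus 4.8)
The plan is to recognize the addition formula \eqref{eq:additionformula} as the Parseval (completeness) identity for the orthogonal polynomial system attached to the $J$-fraction, transported to the exponential kernel by the formal Borel transform. Since only the $J$-fraction data $\alpha_n,\beta_n$ enter \eqref{eq:additionformula}, I would work exclusively with these and never with the $e_n$. First I would introduce the linear functional $\Lambda$ on $\mathbb{C}[t]$, acting in the variable $t$, defined by $\Lambda(t^n)=a_n$, so that, as formal power series in $x$, $h(x)=\Lambda_t\!\left((1-xt)^{-1}\right)$ and $B(h)(x)=\Lambda_t(e^{xt})$; the latter is immediate from $B(t^n)=x^n/n!$. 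The classical correspondence between $J$-fractions and orthogonal polynomials (the relation among the $a_n$ that the statement suppresses; see Rogers \cite{rogers1907representation} or \cite{chihara1978intortho}) says precisely that $\alpha_n,\beta_n$ are the coefficients in the monic three-term recurrence $t\,P_n(t)=P_{n+1}(t)+\alpha_{n+1}P_n(t)+\beta_n P_{n-1}(t)$, with $P_0=1$, $P_{-1}=0$, and $\Lambda(P_mP_n)=a_0\beta_1\cdots\beta_n\,\delta_{mn}$. I would record that $a_0\neq0$ and all $\beta_n\neq0$, which is forced by the $\beta_n$ occurring as denominators in the $\phi$-recurrence.

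The heart of the argument is to identify the auxiliary series $h_r$. Set $S_r(x)\coloneqq\Lambda_t\!\left(P_r(t)/(1-xt)\right)$. I would prove by induction on $r$ that $S_r(x)=(\beta_1\cdots\beta_r)\,x^r\phi_r(x)$: applying $\Lambda_t(\,\cdot/(1-xt))$ to the three-term recurrence and using $t/(1-xt)=x^{-1}\{(1-xt)^{-1}-1\}$ together with $\Lambda(P_n)=a_0\delta_{n,0}$ turns the recurrence for $P_n$ into exactly the recurrence $\phi_{n-2}=(1-\alpha_n x)\phi_{n-1}-\beta_n x^2\phi_n$ and its anomalous first instance $(1-\alpha_1x)\phi_0=a_0+\beta_1x^2\phi_1$ (the constant $a_0$ arising from $\Lambda(P_0)\neq0$), the base case being $S_0=h=\phi_0$. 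Next, reading off coefficients gives $S_r(t)=\sum_k\Lambda(s^kP_r(s))\,t^k$, whence $B(S_r)(x)=\Lambda_s(e^{xs}P_r(s))\eqqcolon G_r(x)$. Combining, $h_r(x)=B(t^r\phi_r(t))(x)=(\beta_1\cdots\beta_r)^{-1}B(S_r)(x)=(\beta_1\cdots\beta_r)^{-1}G_r(x)$.

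With this in hand the addition formula is a one-line computation. Expanding the (polynomial-in-$t$) coefficients of $e^{yt}$ in the orthogonal basis gives the formal identity $e^{yt}=\sum_{r\ge0}\Lambda(P_r^2)^{-1}\,G_r(y)\,P_r(t)$ (coefficient-wise in $y$ this is a finite expansion, so no convergence issue arises), and applying $\Lambda_t(e^{xt}\,\cdot\,)$ yields $B(h)(x+y)=\Lambda_t(e^{xt}e^{yt})=\sum_{r\ge0}\Lambda(P_r^2)^{-1}G_r(x)G_r(y)$. Substituting $G_r=(\beta_1\cdots\beta_r)h_r$ and $\Lambda(P_r^2)=a_0\beta_1\cdots\beta_r$ collapses the coefficient to $(\beta_1\cdots\beta_r)^2/(a_0\beta_1\cdots\beta_r)=\beta_1\cdots\beta_r/a_0=A_r$, in agreement with $A_0=1/a_0$ and $A_{r+1}=\beta_{r+1}A_r$, giving exactly \eqref{eq:additionformula} (the $r=0$ term being $A_0B(h)(x)B(h)(y)$ since $h_0=B(h)$). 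For the converse I would note that each $h_r$ has $x$-adic valuation exactly $r$, so $B(h)=h_0$ recovers $h$, while $\beta_{r+1}=A_{r+1}/A_r$ and the recurrence reconstructed for the $\phi_r$ then read off the $J$-fraction.

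I expect the main obstacle to be the inductive identification $S_r=(\beta_1\cdots\beta_r)x^r\phi_r$ in the second step, because it requires matching the $\phi$-recurrence exactly, including the exceptional first relation where the constant $a_0$ appears (traceable to $\Lambda(P_0)=a_0\neq0$ rather than $0$), and keeping the normalizing products $\beta_1\cdots\beta_r$ consistent so that they cancel correctly against $A_r$ and $\Lambda(P_r^2)$ in the final Parseval step; a slip in precisely these normalizations is the kind of sign and constant error that the footnotes to Rogers' paper warn about.
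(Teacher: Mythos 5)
Your proof is correct, but note that the paper does not actually prove this proposition: it is quoted from Rogers' 1907 paper, with two footnotes correcting signs and indices in Rogers' equations, and Rogers' own argument (as those footnotes indicate) runs through the differential recurrences $\tfrac{d}{dx}f_n = f_{n-1}+\alpha_{n+1}f_n+\beta_{n+1}f_{n+1}$ satisfied by the Borel transforms. Your route is genuinely different and, in my view, cleaner: you read the addition formula as the Parseval identity $\Lambda_t(e^{xt}e^{yt})=\sum_r \Lambda(P_r^2)^{-1}G_r(x)G_r(y)$ for the monic orthogonal polynomials $P_r$ attached to the $J$-fraction, and the only nontrivial work is the inductive identification $S_r(x)=\Lambda_t\bigl(P_r(t)/(1-xt)\bigr)=(\beta_1\cdots\beta_r)\,x^r\phi_r(x)$, which I checked: applying $\Lambda_t(\cdot/(1-xt))$ to $tP_n=P_{n+1}+\alpha_{n+1}P_n+\beta_nP_{n-1}$ and using $t/(1-xt)=x^{-1}\{(1-xt)^{-1}-1\}$ with $\Lambda(P_n)=a_0\delta_{n,0}$ reproduces exactly the $\phi$-recurrence including the anomalous $a_0$ in the first relation, and orthogonality guarantees $x^r\mid S_r$ so that $\phi_r$ is a well-defined power series with $\phi_r(0)=a_0\neq0$. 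The normalizations also close up correctly: $(\beta_1\cdots\beta_r)^2/(a_0\beta_1\cdots\beta_r)=A_r$, which makes the footnoted correction $A_{r+1}=\beta_{r+1}A_r$ transparent rather than something to be extracted from Rogers' recurrences. What your approach buys is a structural explanation that meshes with how the proposition is used later: your $x^rS_r(x)|_{x=1/j(p)}$ is precisely $\mathcal{L}\bigl(A_{r,2}(j)/(j(p)-j)\bigr)$ from Theorem~\ref{prop:imageofAtkinL}(ii), so the hypergeometric formula for $\phi_r$ in the second proof of Theorem~\ref{thm:AtkinFaberFaber} is immediate. The one input you take on faith is the classical $J$-fraction/orthogonality dictionary (Favard plus $\Lambda(P_n^2)=a_0\beta_1\cdots\beta_n$, all $\beta_n\neq0$); since the proposition itself explicitly suppresses the relation between the $a_n$ and the continued-fraction data, invoking this with a citation to Chihara or Rogers is acceptable, but you should state it as a lemma rather than leave it implicit. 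The converse direction is treated a bit telegraphically, though the observation that $h_r$ has $x$-adic valuation exactly $r$ does let one peel off the $A_r$ and $\phi_r$ recursively, which suffices.
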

Let us consider a simple example using the above proposition.
\begin{ex}
We set $h(x)= \frac{1}{1+x^{2}}=\sum_{n=0}^{\infty}(-1)^{n}x^{2n}$ and then 
\begin{align*}
a_{2n}=(-1)^{n},\, a_{2n+1}=0 \,(n\ge0),\, \alpha_{n}=0 \, (n\ge1),\, \beta_{1} = -1, \, \beta_{n}=0 \,(n\ge2).
\end{align*}
Therefore, we have $\phi_{0}(x)=\phi_{1}(x)=\frac{1}{1+x^{2}}$ and
\begin{align*}
&{} B(h)(x) = \sum_{n=0}^{\infty} (-1)^{n} \frac{x^{2n}}{(2n)!} = \cos (x), \\
&{} h_{1}(x) = B( t \phi_{1}(t) )(x) = \sum_{n=0}^{\infty} (-1)^{n} \frac{x^{2n+1}}{(2n+1)!} = \sin (x).
\end{align*}
Since $A_{0}=1,\, A_{1}=-1, \, A_{r}=0\, (r\ge2)$, substituting these into  \eqref{eq:additionformula} gives the addition formula for the cosine function  $\cos (x+y) = \cos (x) \cos (y) - \sin (x) \sin (y)$ (as a formal power series). 
\end{ex}

\begin{proof}[``Hypergeometric'' proof of Theorem \ref{thm:AtkinFaberFaber}]
First, we recall the moment-generating function of the Atkin inner product \eqref{eq:Stieltjes function} and set 
\begin{align}
h(x) = \sum_{n=0}^{\infty} \mathcal{L}(j^{n}) x^{n} = \frac{{}_{2}F_{1} \left( \frac{5}{12}, \frac{13}{12} ; 1 ; 1728 x \right) }{ {}_{2}F_{1} \left( \frac{1}{12}, \frac{5}{12} ; 1 ; 1728x \right) }. \label{eq:h2F1}
\end{align}
From Equation (19) in \cite{kaneko1998supersingular}, we have
\begin{align*}
&{} e_{1} = 720, \; e_{n} = 12 \left( 6 + \frac{(-1)^{n}}{n-1} \right) \left( 6 + \frac{(-1)^{n}}{n} \right) \; (n\ge2), \\
&{} \alpha_{1} = 720, \; \alpha_{n} = \frac{24(144(n-1)^{2}-29)}{(2n-1)(2n-3)} \; (n\ge2), \\
&{}  \beta_{1} = 393120, \; \beta_{n} = \frac{36(12n-13)(12n-7)(12n-5)(12n+1)}{n(n-1)(2n-1)^{2}} \; (n\ge2), \\
&{} A_{0} =1=N_{0,2}, \; A_{r}= \beta_{1} \beta_{2} \cdots \beta_{r} = N_{r,2} \; (r\ge1),
\end{align*}
and then, from a contiguous relation of a hypergeometric series, we finally obtain 
\begin{align*}
\phi_{r}(x) = \frac{{}_{2}F_{1} \left( r +\frac{5}{12}, r +\frac{13}{12} ; 2r+1 ; 1728 x \right) }{ {}_{2}F_{1} \left( \frac{1}{12}, \frac{5}{12} ; 1 ; 1728x \right) } \quad (r\ge0).
\end{align*}
Noting that the formal Borel transform
\begin{align*}
B(h)(x+y) = \sum_{n=0}^{\infty} \mathcal{L}(j^{n}) \frac{(x+y)^{n}}{n!} = \sum_{n=0}^{\infty} \mathcal{L}(j^{n}) \sum_{k=0}^{n} \frac{x^{n-k}}{(n-k)!} \frac{y^{k}}{k!}
\end{align*}
and letting the formal Laplace transforms $j(p) L(*(x))(j(p))$ and $j(q) L(*(y))(j(q))$ act on both sides of \eqref{eq:additionformula} with $h(x)$ as \eqref{eq:h2F1}, we have
\begin{align}
\begin{split}
&{} \sum_{n=0}^{\infty} \mathcal{L}(j^{n}) \sum_{k=0}^{n} j(p)^{-(n-k)} j(q)^{-k} \\
&= \phi_{0} (j(p)^{-1}) \phi_{0} (j(q)^{-1}) + \sum_{r=1}^{\infty} N_{r,2}\, j(p)^{-r}\phi_{r} (j(p)^{-1}) j(q)^{-r} \phi_{r} (j(q)^{-1}) .
\end{split} \label{eq:addphi}
\end{align}
The left-hand side of \eqref{eq:addphi} is equal to
\begin{align*}
\sum_{n=0}^{\infty} \mathcal{L}(j^{n}) \frac{j(p)^{-n-1} - j(q)^{-n-1}}{j(p)^{-1} - j(q)^{-1}} = \frac{1}{{j(p)^{-1} - j(q)^{-1}}} \left( \frac{E_{2}(p)E_{4}(p)}{j(p) E_{6}(p)} - \frac{E_{2}(q)E_{4}(q)}{j(q) E_{6}(q)} \right).
\end{align*}
On the other hand, since
\begin{align*}
j(p)^{-r} \phi_{r}(j(p)^{-1}) &= \frac{{}_{2}F_{1} \left( r +\frac{5}{12}, r +\frac{13}{12} ; 2r+1 ; \tfrac{1728}{j(p)} \right) }{ j(p)^{r} {}_{2}F_{1} \left( \frac{1}{12}, \frac{5}{12} ; 1 ; \tfrac{1728}{j(p)} \right) } \\
&= \left( 1- \frac{1728}{j(p)} \right)^{-1/2} \frac{{}_{2}F_{1} \left( r -\frac{1}{12}, r +\frac{7}{12} ; 2r+1 ; \tfrac{1728}{j(p)} \right) }{ j(p)^{r} {}_{2}F_{1} \left( \frac{1}{12}, \frac{5}{12} ; 1 ; \tfrac{1728}{j(p)} \right) } \\
&= \frac{E_{4}(p) G_{12r+2}(p) }{E_{6}(p) \Delta(p)^{r}} \quad (\text{by Proposition \ref{prop:Gto2F1}}),
\end{align*}
the right-hand side of \eqref{eq:addphi} is equal to 
\begin{align*}
\frac{E_{4}(p)E_{4}(q)}{E_{6}(p)E_{6}(q)} \sum_{r=0}^{\infty} N_{r,2} \, G_{12r+2}(p) G_{12r+2}(q) \left( \Delta(p)\Delta(q) \right)^{-r} .
\end{align*}
After a short calculation we obtain the desired result. 
\end{proof}


\section{Hypergeometric aspects of the generalized Faber polynomials} \label{sec:HypgenFaber}
In this short section, we consider the hypergeometric aspects of the generalized Faber polynomials. Of course, it is not a hypergeometric polynomial, but it has an interesting connection with the hypergeometric series.

Assuming $\alpha, \beta \not\in \mathbb{Z}_{<0}$ and setting 
\begin{align*}
{}_{2}G_{1} \left( \alpha, \beta ; 1 ; z \right) \coloneqq \sum_{n=1}^{\infty} \frac{(\alpha)_{n}(\beta)_{n}}{n!^{2}} \left\{ \sum_{k=0}^{n-1} \left( \frac{1}{\alpha +k} + \frac{1}{\beta +k} -  \frac{2}{1+k} \right)  \right\} z^{n}.
\end{align*}
Then the two-dimensional solution space of the hypergeometric differential equation $\{\Theta^{2} -z (\Theta +\alpha)(\Theta +\beta)\} F =0$ is spanned by ${}_{2}F_{1} \left( \alpha, \beta ; 1 ; z \right)$ and ${}_{2}G_{1} \left( \alpha, \beta ; 1 ; z \right) + \log (z) {}_{2}F_{1} \left( \alpha, \beta ; 1 ; z \right)$. 
It is classically well known that, roughly speaking, the inverse map of a modular function of certain triangle groups is given by the Schwartz map, which is the ratio of the independent solutions of a hypergeometric differential equation.

Now we consider the modular function $t=1/j(q)=1/j(\tau) : \Gamma \backslash \mathfrak{H} \ni \tau \mapsto t \in \mathbb{C} \cup \{ \infty \}$. The first few terms of its Fourier expansion are given by
\begin{align}
t = q -744 q^{2} +356652 q^{3} -140361152 q^{4} +49336682190 q^{5}  +O(q^{6}), \label{eq:reciprocalj} 
\end{align}
and $2\pi i  \tau = \log(q)$ is given by the ratio of two functions
\begin{align*}
{}_{2}F_{1} \left( \frac{1}{12}, \frac{5}{12} ; 1 ; 1728t \right) \; \text{ and } \; {}_{2}G_{1} \left( \frac{1}{12}, \frac{5}{12} ; 1 ; 1728t \right) + \log (t)\,  {}_{2}F_{1} \left( \frac{1}{12}, \frac{5}{12} ; 1 ; 1728t \right).
\end{align*}
We note that essentially the same of these equations can be found in 
\cite[Ab.~I, \S 9]{klein1879transformation} by Klein. 
Thereby, the inverse series $q=q(t)$ can be expressed as follows:
\begin{align}
q &= t \cdot \exp \left(  \frac{{}_{2}G_{1} \left( \frac{1}{12}, \frac{5}{12} ; 1 ; 1728t \right) }{ {}_{2}F_{1} \left( \frac{1}{12}, \frac{5}{12} ; 1 ; 1728t \right) }  \right) \label{eq:normq} \\
&=  t+744 t^{2}+750420 t^{3}+872769632 t^{4}+1102652742882 t^{5} +O(t^{6}).  \notag 
\end{align}
See also \cite[p.~336, Eq.~(10)]{fricke1916elliptischen} by Fricke, \cite[p.~21]{higher1955erdelyi}, \cite[\S 4]{lian1996arithmetic}.

\begin{rem}
Note that the integrality of the coefficients in \eqref{eq:reciprocalj} and  \eqref{eq:normq} is  equivalent. The series \eqref{eq:reciprocalj} is a mirror map of a certain family of $K3$ surfaces, and for more details of this fact, we refer to \cite[\S 4--\S 5]{lian1996arithmetic}. The integrality of the Fourier coefficients in \eqref{eq:reciprocalj}, and therefore the coefficients in  \eqref{eq:normq}, can be easily shown, but it is generally unclear whether a mirror map determined from the solution of a Picard-Fuchs differential equation has such a property. However, the integrality is known to hold for a fairly wide class, and for interested readers we refer to \cite{delaygue2017hypergeometricmirrormaps} and its references.
\end{rem}

For the power series $f(t)=\sum_{n=0}^{\infty} a_{n} t^{n}$, we denote the symbol $\langle f(t) \rangle_{\ell}$ as $\sum_{n=0}^{\ell} a_{n} t^{n}$. 
We can obtain the generalized Faber polynomials directly, without computing the Fourier expansion of equations \eqref{eq:genfct2-k} and \eqref{eq:genfctk}, by the following assertion.
\begin{prop}\label{prop:HypgenFaberpoly}
Put $t=1/j$, $\mathcal{F}_{1}(t)={}_{2}F_{1}\left( \frac{1}{12}, \frac{5}{12} ; 1 ; 1728t \right)$ and $\mathcal{G}_{1}(t)={}_{2}G_{1}\left( \frac{1}{12}, \frac{5}{12} ; 1 ; 1728t \right)$. 
For any even integer $k=12m+4\delta+6\varepsilon$ with $m \in \mathbb{Z}, \, \delta \in \{0,1,2\}, \, \varepsilon \in \{0,1\}$, and an integer $\ell \ge -m$, we have 
\begin{align*}
F_{k,\ell+m}(j) = j^{\ell+m} \left\langle \left( 1 - 1728 t \right)^{-\varepsilon/2} \mathcal{F}_{1}(t)^{-k} \exp \left( - \ell \, \frac{ \mathcal{G}_{1}(t) }{ \mathcal{F}_{1}(t) }  \right) \right\rangle_{\ell+m} .
\end{align*}
\end{prop}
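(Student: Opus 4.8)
The plan is to start from the defining property $f_{k,\ell} = q^{-\ell} + O(q^{m+1})$ of the weakly holomorphic forms in \eqref{eq:basefkl}, rewrite both the form and its normalization entirely in terms of $t = 1/j$, and then simply read off $F_{k,\ell+m}$ as a truncated power series. First I would substitute the hypergeometric expressions of Proposition \ref{prop:EisensteinHyp} into \eqref{eq:defgenFaberpoly}. Using $E_{4} = \mathcal{F}_{1}(t)^{4}$, $E_{6} = (1-1728t)^{1/2}\mathcal{F}_{1}(t)^{6}$, and $\Delta = E_{4}^{3}/j = \mathcal{F}_{1}(t)^{12}\,t$, the factor $E_{4}^{\delta}E_{6}^{\varepsilon}\Delta^{m}$ collapses to $(1-1728t)^{\varepsilon/2}\mathcal{F}_{1}(t)^{k}\,t^{m}$, since $4\delta+6\varepsilon+12m = k$. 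Writing $P(t) \coloneqq t^{\ell+m}F_{k,\ell+m}(1/t)$ — a polynomial in $t$ of degree $\le \ell+m$ with $P(0)=1$, because $F_{k,\ell+m}$ is monic — this yields $f_{k,\ell} = (1-1728t)^{\varepsilon/2}\mathcal{F}_{1}(t)^{k}\,t^{-\ell}P(t)$.

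Next I would invoke \eqref{eq:normq} in the form $t/q = \exp(-\mathcal{G}_{1}(t)/\mathcal{F}_{1}(t))$, so that $t^{\ell}q^{-\ell} = \exp(-\ell\,\mathcal{G}_{1}/\mathcal{F}_{1})$. Multiplying the identity $f_{k,\ell} = q^{-\ell} + O(q^{m+1})$ through by $t^{\ell}$ then gives
\begin{align*}
(1-1728t)^{\varepsilon/2}\mathcal{F}_{1}(t)^{k}P(t) = \exp\!\left(-\ell\,\frac{\mathcal{G}_{1}}{\mathcal{F}_{1}}\right) + t^{\ell}\,O(q^{m+1}).
\end{align*}
Because $t = q + O(q^{2})$ is an invertible substitution, the error term is $O(q^{\ell+m+1}) = O(t^{\ell+m+1})$, which is legitimate precisely because $\ell \ge -m$. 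Since $(1-1728t)^{\varepsilon/2}\mathcal{F}_{1}(t)^{k}$ has constant term $1$ and is therefore an invertible formal power series, I can divide through to obtain
\begin{align*}
P(t) = (1-1728t)^{-\varepsilon/2}\mathcal{F}_{1}(t)^{-k}\exp\!\left(-\ell\,\frac{\mathcal{G}_{1}}{\mathcal{F}_{1}}\right) + O(t^{\ell+m+1}).
\end{align*}
As $P(t)$ is a polynomial of degree $\le \ell+m$, it must coincide with the degree-$(\ell+m)$ truncation of the power series on the right; substituting $t^{-(\ell+m)}P(t) = F_{k,\ell+m}(j)$ and $t^{-(\ell+m)} = j^{\ell+m}$ produces the asserted formula.

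The argument is essentially a change of variables, so the only delicate points are the passage between $q$-order and $t$-order of the error term, and the verification that $P(t)$ has degree at most $\ell+m$ with constant term $1$; both follow immediately from $t = q + O(q^{2})$ and the monicity of the generalized Faber polynomial. I expect no genuine obstacle beyond keeping these normalizations straight — in particular, checking the constant-term consistency ($(1-0)^{-\varepsilon/2}\mathcal{F}_{1}(0)^{-k}\exp(0) = 1 = P(0)$) is a useful sanity check but is automatic.
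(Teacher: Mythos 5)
Your proposal is correct and follows essentially the same route as the paper: both substitute the hypergeometric expressions of Proposition \ref{prop:EisensteinHyp} to turn $E_{4}^{\delta}E_{6}^{\varepsilon}\Delta^{m}$ into $(1-1728t)^{\varepsilon/2}\mathcal{F}_{1}(t)^{k}t^{m}$, use \eqref{eq:normq} to express $q^{-\ell}$ via $t^{-\ell}\exp(-\ell\,\mathcal{G}_{1}/\mathcal{F}_{1})$, and conclude by observing that $j^{-\ell-m}F_{k,\ell+m}(j)$ is a polynomial in $t$ of degree $\ell+m$, hence equals the truncation. Your extra care with the $q$-order versus $t$-order of the error term and the constant-term sanity check are fine but not a substantive departure.
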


\begin{proof}
Substituting \eqref{eq:normq} for $q^{-\ell}$ in \eqref{eq:basefkl}, we have
\begin{align*}
f_{k,\ell}(q) = q^{-\ell}(1+O(q^{\ell+m+1})) = t^{-\ell} \exp \left( - \ell \, \frac{ \mathcal{G}_{1}(t) }{ \mathcal{F}_{1}(t) }  \right) (1+ O(t^{\ell+m+1})).
\end{align*}
Then, by rewriting the definition of the generalized Faber polynomial using the hypergeometric expressions of the Eisenstein series in Proposition \ref{prop:EisensteinHyp}, we obtain
\begin{align*}
j^{-\ell-m} F_{k,\ell+m}(j) &= t^{\ell+m} E_{4}^{-\delta}E_{6}^{-\varepsilon} \Delta^{-m} f_{k,\ell}(q) = \left( 1 - 1728 t \right)^{-\varepsilon/2} \mathcal{F}_{1}(t)^{-k}\, t^{\ell} f_{k,\ell}(q) \\
&= \left( 1 - 1728 t \right)^{-\varepsilon/2} \mathcal{F}_{1}(t)^{-k} \exp \left( - \ell \, \frac{ \mathcal{G}_{1}(t) }{ \mathcal{F}_{1}(t) }  \right) (1+ O(t^{\ell+m+1})).
\end{align*}
Since the left-hand side is a polynomial of  degree $\ell+m$ with respect to $t$, we obtain the desired result by truncating the series on the right-hand side at order $\ell+m$. 
\end{proof}

Using Proposition \ref{prop:HypgenFaberpoly}, we can explicitly compute some higher-order coefficients of the generalized Faber polynomials. For instance, setting $F_{k,\ell+m}(X) = X^{\ell+m} + \sum_{i=1}^{\ell+m} c_{i}(k,\ell) X^{\ell+m-i}$ and then 
\begin{align*}
c_{1}(k,\ell) &= -744 \ell -12(5k-72\varepsilon), \\
c_{2}(k,\ell) &= 26768 \ell^{2} + 36 (1240 k - 17856 \varepsilon -13157) \ell \\
&{} \quad + 36 \left\{ 50 k^{2} - 5(288\varepsilon +211)k   +31104  \varepsilon \right\} . 
\end{align*}

\section*{Acknowledgement}
The author would like to thank Professor Masanobu Kaneko and Dr.~Yuichi Sakai for their helpful comments on a draft of this paper.


\bibliographystyle{abbrv}
\bibliography{ref-nakaya2023orthogonalityAtkin-likepoly.bib}

\begin{thebibliography}{10}

\bibitem{asai1997zeros}
T.~Asai, M.~Kaneko, and H.~Ninomiya.
\newblock Zeros of certain modular functions and an application.
\newblock {\em Comment. Math. Univ. St. Paul.}, {\bf 46}(1):93--101, 1997.

\bibitem{bailey1928products}
W.~N. Bailey.
\newblock Products of generalized hypergeometric series.
\newblock {\em Proc. London Math. Soc. (2)}, {\bf 28}(4):242--254, 1928.

\bibitem{bannai2006sphericaldesigns}
E.~Bannai, M.~Koike, M.~Shinohara, and M.~Tagami.
\newblock Spherical designs attached to extremal lattices and the modulo $p$
  property of {F}ourier coefficients of extremal modular forms.
\newblock {\em Mosc. Math. J.}, {\bf 6}(2):225--264, 2006.

\bibitem{basha2004systems}
S.~Basha, J.~Getz, H.~Nover, and E.~Smith.
\newblock Systems of orthogonal polynomials arising from the modular
  {$j$}-function.
\newblock {\em J. Math. Anal. Appl.}, {\bf 289}(1):336--354, 2004.

\bibitem{borcherds1992monstrous}
R.~E. Borcherds.
\newblock Monstrous moonshine and monstrous {L}ie superalgebras.
\newblock {\em Invent. Math.}, {\bf 109}(2):405--444, 1992.

\bibitem{borcherds1998automorphic}
R.~E. Borcherds.
\newblock Automorphic forms with singularities on {G}rassmannians.
\newblock {\em Inventiones Mathematicae}, {\bf 132}(3):491--562, 1998.

\bibitem{borcherds2002what}
R.~E. Borcherds.
\newblock What is the monster?
\newblock In {\em Notices of the A.M.S}, volume~{\bf 49}, pages 1076--1077.
  October 2002.

\bibitem{bringmann2017regularized}
K.~Bringmann, N.~Diamantis, and S.~Ehlen.
\newblock Regularized inner products and errors of modularity.
\newblock {\em Int. Math. Res. Not. IMRN}, (24):7420--7458, 2017.

\bibitem{chihara1978intortho}
T.~S. Chihara.
\newblock {\em An introduction to orthogonal polynomials}.
\newblock Mathematics and its Applications, Vol. 13. Gordon and Breach Science
  Publishers, New York-London-Paris, 1978.

\bibitem{cooper2017ramanujan}
S.~Cooper.
\newblock {\em Ramanujan's theta functions}.
\newblock Springer, Cham, 2017.

\bibitem{delaygue2017hypergeometricmirrormaps}
E.~Delaygue, T.~Rivoal, and J.~Roques.
\newblock On {D}work's {$p$}-adic formal congruences theorem and hypergeometric
  mirror maps.
\newblock {\em Mem. Amer. Math. Soc.}, {\bf 246}(1163):v+94, 2017.

\bibitem{duke2016regularized}
W.~Duke, {\"O}.~Imamo{\=g}lu, and {\'A}.~T{\'o}th.
\newblock Regularized inner products of modular functions.
\newblock {\em Ramanujan J.}, {\bf 41}(1-3):13--29, 2016.

\bibitem{duke2008zeros}
W.~Duke and P.~Jenkins.
\newblock On the zeros and coefficients of certain weakly holomorphic modular
  forms.
\newblock {\em Pure Appl. Math. Q.}, {\bf 4}(4, Special Issue: In honor of
  Jean-Pierre Serre. Part 1):1327--1340, 2008.

\bibitem{duncan2016jack}
J.~F.~R. Duncan and K.~Ono.
\newblock The {J}ack {D}aniels problem.
\newblock {\em J. Number Theory}, {\bf 161}:230--239, 2016.

\bibitem{guindy2014atkin}
A.~El-Guindy and M.~E.~H. Ismail.
\newblock On the {A}tkin polynomials.
\newblock {\em Pacific J. Math.}, {\bf 272}(1):111--129, 2014.

\bibitem{higher1955erdelyi}
A.~Erd\'{e}lyi, W.~Magnus, F.~Oberhettinger, and F.~G. Tricomi.
\newblock {\em Higher transcendental functions. {V}ol. {III}}.
\newblock McGraw-Hill Book Co., Inc., New York-Toronto-London, 1955.
\newblock Based, in part, on notes left by Harry Bateman.

\bibitem{fricke1916elliptischen}
R.~Fricke.
\newblock {\em Die elliptischen funktionen und ihre anwendungen. {E}rster
  {T}eil. {D}ie {F}unktionentheoretischen und analytischen {G}rundlagen.}
\newblock B.G. Teubner, 1916.

\bibitem{getz2004generalizationzeros}
J.~Getz.
\newblock A generalization of a theorem of {R}ankin and {S}winnerton-{D}yer on
  zeros of modular forms.
\newblock {\em Proc. Amer. Math. Soc.}, {\bf 132}(8):2221--2231, 2004.

\bibitem{grabner2020quasimodular}
P.~J. Grabner.
\newblock Quasimodular forms as solutions of modular differential equations.
\newblock {\em Int. J. Number Theory}, {\bf 16}(10):2233--2274, 2020.

\bibitem{kaneko1999zeros}
M.~Kaneko.
\newblock On the zeros of certain modular forms.
\newblock In {\em Number theory and its applications ({K}yoto, 1997)},
  volume~{\bf 2} of {\em Dev. Math.}, pages 193--197. Kluwer Acad. Publ.,
  Dordrecht, 1999.

\bibitem{kaneko2003modular}
M.~Kaneko and M.~Koike.
\newblock On modular forms arising from a differential equation of
  hypergeometric type.
\newblock {\em Ramanujan J.}, {\bf 7}(1-3):145--164, 2003.
\newblock Rankin memorial issues.

\bibitem{kaneko2006extremal}
M.~Kaneko and M.~Koike.
\newblock On extremal quasimodular forms.
\newblock {\em Kyushu J. Math.}, {\bf 60}(2):457--470, 2006.

\bibitem{kaneko1998supersingular}
M.~Kaneko and D.~Zagier.
\newblock Supersingular {$j$}-invariants, hypergeometric series, and {A}tkin's
  orthogonal polynomials.
\newblock In {\em Computational perspectives on number theory ({C}hicago, {IL},
  1995)}, volume~{\bf 7} of {\em AMS/IP Stud. Adv. Math.}, pages 97--126. Amer.
  Math. Soc., Providence, RI, 1998.

\bibitem{klein1879transformation}
F.~Klein.
\newblock Ueber die {T}ransformation der elliptischen {F}unctionen und die
  aufl{\"o}sung der gleichungen f{\"u}nften {G}rades.
\newblock {\em Mathematische Annalen}, {\bf 14}:111--172, 1879.

\bibitem{knopp1990rademacher}
M.~I. Knopp.
\newblock Rademacher on {$J(\tau)$}, {P}oincar{\'e} series of nonpositive
  weights and the {E}ichler cohomology.
\newblock {\em Notices Amer. Math. Soc.}, {\bf 37}(4):385--393, 1990.

\bibitem{lang1995introductionmf}
S.~Lang.
\newblock {\em Introduction to modular forms}, volume 222 of {\em Grundlehren
  der Mathematischen Wissenschaften [Fundamental Principles of Mathematical
  Sciences]}.
\newblock Springer-Verlag, Berlin, 1995.
\newblock With appendixes by D. Zagier and Walter Feit, Corrected reprint of
  the 1976 original.

\bibitem{lerche1989lattices}
W.~Lerche, A.~N. Schellekens, and N.~P. Warner.
\newblock Lattices and strings.
\newblock {\em Phys. Rep.}, {\bf 177}(1-2):1--140, 1989.

\bibitem{lian1996arithmetic}
B.~H. Lian and S.-T. Yau.
\newblock Arithmetic properties of mirror map and quantum coupling.
\newblock {\em Comm. Math. Phys.}, {\bf 176}(1):163--191, 1996.

\bibitem{cruz2018manifold}
J.~A.~C. Morales, H.~Movasati, Y.~Nikdelan, R.~Roychowdhury, and M.~A.~C.
  Torres.
\newblock Manifold ways to {D}arboux-{H}alphen system.
\newblock {\em SIGMA Symmetry Integrability Geom. Methods Appl.}, {\bf
  14}:Paper No. 003, 14 pages, 2018.

\bibitem{morton2023supersingular}
P.~Morton.
\newblock Supersingular conjectures for the {F}ricke group.
\newblock {\em Int. J. Number Theory}, {\bf 19}(4):803--828, 2023.

\bibitem{movasati2012quasimodular}
H.~Movasati.
\newblock Quasi-modular forms attached to elliptic curves, {I}.
\newblock {\em Ann. Math. Blaise Pascal}, {\bf 19}(2):307--377, 2012.

\bibitem{nakaya2019supersingular}
T.~Nakaya.
\newblock The number of linear factors of supersingular polynomials and
  sporadic simple groups.
\newblock {\em J. Number Theory}, {\bf 204}:471--496, 2019.

\bibitem{nakaya2023determination}
T.~Nakaya.
\newblock Determination of normalized extremal quasimodular forms of depth 1
  with integral {F}ourier coefficients, 2023.
\newblock arXiv:2305.18669v1[math.NT].

\bibitem{ogg1975automorphismes}
A.~P. Ogg.
\newblock Automorphismes de courbes modulaires.
\newblock {\em S{\'e}minaire Delange-Pisot-Poitou. Th{\'e}orie des nombres},
  {\bf 16}(1):1--8, 1975.

\bibitem{peherstorfer1992finite}
F.~Peherstorfer.
\newblock Finite perturbations of orthogonal polynomials.
\newblock {\em J. Comput. Appl. Math.}, {\bf 44}(3):275--302, 1992.

\bibitem{rademacher1938fourierj}
H.~Rademacher.
\newblock The {F}ourier {C}oefficients of the {M}odular {I}nvariant
  {$J(\tau)$}.
\newblock {\em Amer. J. Math.}, {\bf 60}(2):501--512, 1938.

\bibitem{rademacher1939absolutej}
H.~Rademacher.
\newblock The {F}ourier {S}eries and the {F}unctional {E}quation of the
  {A}bsolute {M}odular {I}nvariant {$J(\tau)$}.
\newblock {\em Amer. J. Math.}, {\bf 61}(1):237--248, 1939.

\bibitem{ramanujan2000arithmeticalfunctions}
S.~Ramanujan.
\newblock On certain arithmetical functions [{T}rans. {C}ambridge {P}hilos.
  {S}oc. {\bf 22} (1916), no. 9, 159--184].
\newblock In {\em Collected papers of {S}rinivasa {R}amanujan}, pages 136--162.
  AMS Chelsea Publ., Providence, RI, 2000.

\bibitem{rankin1968zerosEisenstein}
R.~A. Rankin.
\newblock The zeros of {E}isenstein series.
\newblock {\em Publ. Ramanujan Inst.}, {\bf 1}:137--144, 1968/69.

\bibitem{rogers1907representation}
L.~J. Rogers.
\newblock On the {R}epresentation of {C}ertain {A}symptotic {S}eries as
  {C}onvergent {C}ontinued {F}ractions.
\newblock {\em Proc. London Math. Soc. (2)}, {\bf 4}:72--89, 1907.

\bibitem{sakai2010modular}
Y.~Sakai.
\newblock On modular solutions of certain meromorphic modular differential
  equations.
\newblock {\em Ramanujan J.}, {\bf 22}(3):261--272, 2010.

\bibitem{sakai2011atkin}
Y.~Sakai.
\newblock The {A}tkin orthogonal polynomials for the low-level {F}ricke groups
  and their application.
\newblock {\em Int. J. Number Theory}, {\bf 7}(6):1637--1661, 2011.

\bibitem{sakai2014atkin}
Y.~Sakai.
\newblock The {A}tkin orthogonal polynomials for the {F}ricke groups of levels
  5 and 7.
\newblock {\em Int. J. Number Theory}, {\bf 10}(8):2243--2255, 2014.

\bibitem{stiller1988classical}
P.~F. Stiller.
\newblock Classical automorphic forms and hypergeometric functions.
\newblock {\em J. Number Theory}, {\bf 28}(2):219--232, 1988.

\bibitem{szego1975orthogonal}
G.~Szeg\H{o}.
\newblock {\em Orthogonal polynomials}.
\newblock American Mathematical Society Colloquium Publications, Vol. XXIII.
  American Mathematical Society, Providence, R.I., fourth edition, 1975.

\bibitem{zagier2008elliptic}
D.~Zagier.
\newblock Elliptic modular forms and their applications.
\newblock In {\em The 1-2-3 of modular forms}, Universitext, pages 1--103.
  Springer, Berlin, 2008.

\bibitem{zhedanov1997rational}
A.~Zhedanov.
\newblock Rational spectral transformations and orthogonal polynomials.
\newblock {\em J. Comput. Appl. Math.}, {\bf 85}(1):67--86, 1997.

\end{thebibliography}


\end{document}